\newcommand{\R}{{\mathbb R}}
\newcommand{\p}{\partial}
\newcommand{\fr}{\frac}
\newcommand{\la}{\langle}
\newcommand{\ra}{\rangle}
\newcommand{\na}{\nabla}
\newcommand{\e}{\epsilon}
\newcommand{\be}{\begin{equation}}
\newcommand{\ba}{\begin{aligned}}
\newcommand{\bee}{\begin{equation*}}
\newcommand{\ee}{\end{equation}}
\newcommand{\ea}{\end{aligned}}
\newcommand{\eee}{\end{equation*}}
\newcommand{\PP}{\bigl(\p_t-\fr{1}{H^2}\Delta\bigr)} 
\newcommand{\pp}{(\p_t-H^{-2}\Delta)} 
\newcommand{\n}{\nu}
\newcommand{\grad}{{\rm grad}\, }
\newcommand{\dist}{{\rm dist}\, }
\newcommand{\ds}{\displaystyle}
\newcommand{\La}{ {\Delta}  }
\theoremstyle{plain}
\newtheorem{theorem}{Theorem}[section]
\newtheorem{thm}[theorem]{Theorem}
\newtheorem{cor}[theorem]{Corollary}
\newtheorem{lemma}[theorem]{Lemma}
\newtheorem{lem}[theorem]{Lemma}
\newtheorem{prop}[theorem]{Proposition}
\newtheorem{claim}{Claim}[section]
\theoremstyle{remark}
\newtheorem{rem}{Remark}[section]
\newtheorem{remark}[rem]{Remark}
\newtheorem{example}[rem]{Example}
\theoremstyle{definition}
\newtheorem{definition}[theorem]{Definition}
\numberwithin{equation}{section}
\begin{document}
\title{Evolution of non-compact hypersurfaces by \\ inverse mean curvature}
\author{Beomjun Choi\thanks{{\color{black}B.~Choi  thanks the NSF for support in DMS-1600658}} and Panagiota Daskalopoulos\thanks{{\color{black}P.~Daskalopoulos thanks the NSF for support in DMS-1600658
and DMS-1900702.}}
}

\date{}
\maketitle

\begin{abstract}

We study the evolution of  complete non-compact convex hypersurfaces in $\R^{n+1}$ by the inverse mean curvature flow. We establish  the long time existence of solutions and provide the characterization of the maximal time of existence in terms of the tangent cone at infinity of the initial hypersurface. Our proof is based on an a'priori pointwise estimate on the mean curvature of the solution from below in terms of the aperture of a supporting cone at infinity.  The strict convexity of convex solutions is shown by means of viscosity solutions. Our  methods also give an alternative proof of the result by Huisken and Ilmanen in \cite{HI} on compact star-shaped solutions, based on  maximum principle argument. 

\end{abstract}


\section{Introduction}

A one-parameter family of i{\color{black}m}mersions $F : M^n \times [0,T]\to \R^{n+1}$  is a smooth complete solution to the {\em inverse mean curvature flow} (IMCF) in $\R^{n+1}$ if each $M_t:= F(\cdot,t)(M^n)$ is a smooth strictly mean convex complete hypersurface satisfying   
\be\label{eqn-IMCF} 
\frac{\partial}{\partial t} F(p,t)  =  H^{-1}(p,t)  \, \n(p,t)
\ee
where  $H(p,t)>0$  and $\nu(p,t)$ denote the mean curvature and outward unit normal of $M_t$, {\color{black}pointing opposite to the mean curvature vector.} 

\medskip
This  flow has been extensively studied for compact hypersurfaces. Gerhardt \cite{Ge2} and Urbas \cite{Ur} showed  compact smooth star-shaped strictly mean convex hypersurface admits  a  unique  smooth solution for all times $t \ge0$. Moreover,  the solution approaches to a homothetically  expanding sphere as $t\to\infty$.

For non-starshaped initial data it is well known that singularities may
develop (See  \cite{HI1} \cite{Sm}).  This happens when the  mean curvature  vanishes in some regions  which makes the classical
flow undefined. However, in  \cite{ HI1, HI2} Huisken and Ilmanen  developed a  level set approach to {\em weak
variational solutions}  of the flow which allows the solutions to {\em jump outwards}  in possible regions where $H=0$. Using the weak formulation, they gave the first proof of the {\em Riemannian Penrose inequality} in General Relativity.  One  key observation 
 in \cite{HI2} was the fact the Hawking mass of surface in  $3$-manifold of nonnegative scalar curvature is monotone under the weak flow, which was first discovered for classical solutions by Geroch \cite{Geroch}. Note that the Riemannian Penrose inequality was  shown  in more general settings by Bray \cite{Bray1} and Bray-Lee \cite{BL} by  different methods. 
Using similar techniques, the IMCF has been used to show geometric inequalities in various settings. For instance, see \cite{guan2007quermassintegral,BHW} for Minkowski type inequalities, \cite{lee2015penrose} for Penrose inequalities and \cite{makowski2013rigidity, de2016alexandrov,ge2014hyperbolic} for Alexandrov-Fenchel type inequalities among other results.  Note another important application of the flow by Bray and Neves in \cite{BN}. 

\medskip
In \cite{HI} Huisken and Ilmanen studied the  IMCF running from
compact  star-shaped weakly mean convex initial data.  
Using  star-shapedness and the ultra-fast diffusion character of the flow,   they derive a bound from above on $H^{-1}$ for  $t>0$ {\em which is independent of the initial curvature assumption}.  This follows by a  {\em Stampacchia iteration} argument and
utilizes  the {\em Michael-Simon Sobolev inequality}. The 
 $C^\infty$ regularity of solutions for $t>0$ easily  follows from the bound  on $H^{-1}$. The estimate in \cite{HI} is {local \em in time}, but necessarily {\em global in space} as it depends on the area of the initial hypersurface $M_0$ and uses global integration on $M_t$.
  As a consequence, the techniques in \cite{HI}  cannot be applied directly to  the non-compact setting.  Note that   \cite {li2017inverse} and \cite{zhou2017inverse} provide similar estimates for the IMCF in some negatively curved ambient spaces.  

\smallskip  
This work addresses {\em   the long time existence}  of {\em  non-compact}  smooth convex  solutions to the IMCF
embedded  in  {\em Euclidean space}
$\R^{n+1}$.  While extrinsic geometric flows have been extensively studied in the case of compact hypersurfaces, much remains to be investigated 
for  non-compact cases. The important  works by K. Ecker and G. Huisken \cite{EH1, EH}  address the evolution of entire graphs by {\em mean curvature flow} and  establish a surprising result:  existence for all times  with the only assumption that the initial data $M_0$ is a {\em locally Lipschitz} entire graph and  {\em no assumption of the growth at infinity of $M_0$.} This result  
is based on  priori estimates which are {\em  localized  in space}.  In addition,
the main local bound on the second fundamental form $|A|^2$ of $M_t$ is achieved without any  bound assumption on $|A|^2$  on $M_0$.  An  open question between  experts in the field has been  whether the techniques of Ecker and Huisken in
\cite{EH1, EH} can be extended to  the fully-nonlinear setting,  in particular on entire convex graphs 
evolving  by the  $\alpha$-{\em Gauss curvature flow} (powers $K^\alpha$ of the Gaussian curvature) and 
the {\em inverse mean curvature flow}. 

In \cite{CDKL} the second author,   jointly with Kyeongsu Choi, Lami Kim and Kiahm Lee,  established the long time existence of the  $\alpha$-{\em Gauss curvature flow} on any strictly  convex  complete non-compact hypersurface and for any $\alpha >0$. They showed  similar estimates 
as in  \cite{EH1, EH} which are {\em localized in space} can be obtained for this flow, however the method is more involved due to the {\em degenerate}  and
{\em fully-nonlinear character}  of the Monge-Amp{\color{black}\'ere} type of equation involved.   However, such localized results are not expected to hold
for the {\em  inverse mean curvature flow} where the {\em ultra-fast diffusion} tends to cause   instant  propagation from spatial infinity. 
In fact, one sees certain similarities between the latter two flows and the well known quasilinear models of diffusion on $\R^n$
\be\label{eq-ultra} 
u_t = \mbox{div} \big ( u^{m-1} \, \nabla u \big ).
\ee
Exponents $m >1$ correspond to degenerate diffusion while exponents $m <0$ to ultra-fast diffusion. We will see in the sequel that under the IMCF  the mean curvature $H$ satisfies an equation which is similar to \eqref{eq-ultra} with $m=-1$. 
Our goal is to study this phenomenon and establish the long time existence, an analogue of the results in \cite{EH1, EH} and \cite{CDKL}. 

{\color{black} Let us remark existing results on the IMCF of hypersurfaces other than closed ones. In \cite{A}, B. Allen investigated non-compact solutions in the hyperbolic space which are graphs on the horosphere.  One  key estimate  in \cite{A}    was to show that uniform upper and lower bounds on the mean curvature persist under some initial assumptions. The second author and Huisken \cite{DH} studied non-compact solutions in $\mathbb{R}^{n+1}$ under some initial conditions and we will discuss this result later this section. The flow with free boundary, i.e. solutions with Neumann-type boundary condition, has quite extensive literature. We refer the reader to \cite{St1,St2},\cite{Ed} and citations there-in for the mean curvature flow and \cite{M1,M2},\cite{LS} for the IMCF. }

\smallskip 

We will next state our main results. The following observation motivates the formulation of our theorem. 
\begin{example}[Conical solutions to IMCF]\label{example1}
For a solution $\Gamma_t$ to the IMCF   in $\mathbb{S}^n$, the {\em  family of cones  generated by  $\Gamma_t$} $$ \mathcal{C} \Gamma_t:= \{r x\in\mathbb{R}^{n+1} \,:\, r\ge0 ,\, x\in \Gamma_t \} $$  is a solution to the IMCF in $\mathbb{R}^{n+1}$ which is smooth except from  the origin.  
When $\Gamma_0$ is a compact smooth strictly convex, Gerhardt  \cite{Ge} and Makowski-Scheuer \cite{makowski2013rigidity} showed  the unique existence of solution for time $t\in [0, T)$ with $T<\infty$ and the convergence of solution  to an equator  as $t\to T$. Moreover, we have explicit formula $T=\ln |\mathbb{S}^{n-1}| - \ln |\Gamma_0|$ by the exponential growth of area in time, (2) in Lemma \ref{lem-HI1}.  {\color{black}Note also that $\mathcal{C}\Gamma_t$, restricted to the unit ball in $\mathbb{R}^{n+1}$, moves by the IMCF with free boundary on $\mathbb{S}^n$ in the sense of \cite{LS}.}
\end{example} 
From Example \ref{example1} and the ultra-fast diffusive character of the equation, it is reasonable to guess that the behavior of non-compact convex solution and its maximal time of existence is governed by the asymptotics at infinity.  For a non-compact convex set $\hat M_0$ and the associated hypersurface $M_0=\p \hat M_0$, we recall the definition of the {\em blow-down}, so called 
{\em the tangent cone at infinity}. 

\begin{figure}
\centering
\def\svgscale{1}{
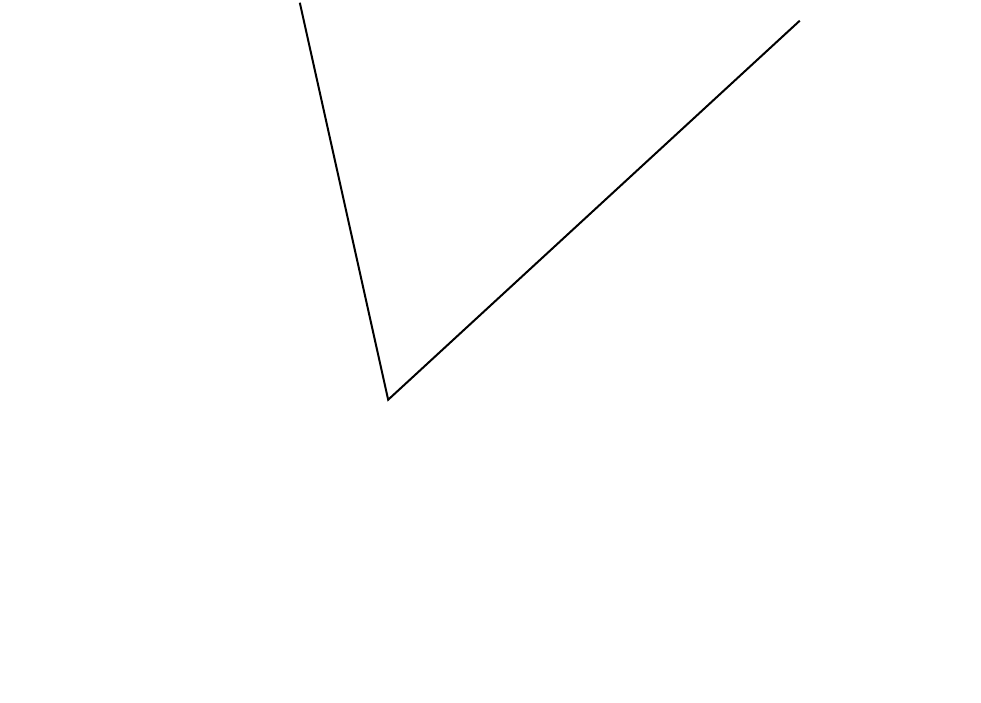
\caption{Definition \ref{defn-cone}}}
\end{figure}

\begin{definition}[Tangent cone at infinity]\label{defn-cone}{\em 
Let $\hat M_0\subset \mathbb{R}^{n+1}$ be a non-compact  closed convex set. For a point $p\in \hat M_0$, we denote  the tangent cone of $\hat M_0$ at infinity by $$\hat C_0:=\cap_{\lambda>0} \lambda(\hat M_0-p).$$ The definition is independent of $p\in\hat M_0$.  $C_0:= \p \hat C_0$ is called the tangent cone of $M_0=\p \hat M_0$ at infinity. $\hat \Gamma_0:= \hat C_0 \cap \mathbb{S}^{n}$ and $\Gamma_0:=C_0 \cap \mathbb{S}^n$ are called the links of tangent cones $\hat C_0$ and $C_0$, respectively.} 
\end{definition}

In this work, {\color{black}we say $M_0$ is convex hypersurface if it is}  the  boundary of a closed {\color{black} convex} set with non-empty interior. See Definition \ref{def-convexity} and {\color{black}subsequent} discussion for more details. For convex hypersurface $M_0$ in $\mathbb{R}^{n+1}$,  Lemma \ref{lem-elementary}
shows $M_0 = N_0 \times \R^k$ for some convex hypersurface $N_0$ in $\mathbb{R}^{n+1-k}$ which is homeomorphic to either $\mathbb{S}^{n-k}$   or  $\mathbb{R}^{n-k}$.  In the first case, the existence of compact IMCF, say $N_t$, running from $N_0$ is known in \cite{HI}  and thus $ N_t \times \R^k$ becomes a solution with initial data $M_0$. Therefore, the essential remaining  case is when $M_0$ is homeomorphic to $\mathbb{R}^n$.  We state our existence result.

\begin{theorem}\label{thm-mainexistence}
Let $\hat M_0$ in $\mathbb{R}^{n+1}$ with $n\ge2$ be a non-compact convex set with interior whose boundary $M_0$ is 
$C^{1,1}_{loc}$ and homeomorphic to $\mathbb{R}^n$, and $T=T(M_0)$ be a number defined by   \be \label {eq-T}T = \ln|\mathbb{S}^{n-1}| - \ln P(\hat\Gamma_0)\in[0,\infty].\ee Here, $\hat \Gamma_0$ is the link of tangent cone of $\hat M_0$ at infinity, $|\cdot | : =\mathcal{H}^{n-1}(\cdot)$, and $P(\hat \Gamma)$ is the perimeter of $\hat\Gamma$ in $\mathbb{S}^n$ defined by \be \label{eq-perimeter}P(\hat \Gamma_0) = \begin{cases}\begin{aligned} |\Gamma_0|\quad &\text{if }\hat \Gamma_0 \text{ has non-empty interior in }\mathbb{S}^n\\
2|\Gamma_0|\quad &\text{if }\hat \Gamma_0 \text{ has empty interior in }\mathbb{S}^n.\end{aligned}\end{cases}\ee 
{\color{black}If $T>0$, a smooth convex solution to the IMCF, say $M_t=\p\hat M_t$, exists for $0<t<T$.} $M_0$ is the initial data in the sense that  $M_t$ converges to $M_0$ locally uniformly as $t\to0$. $M_t$ is strictly convex if and only if $\hat M_0$ contains no infinite straight line inside.  \end{theorem}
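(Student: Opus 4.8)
The plan is to produce $M_t$ as a limit of IMCF solutions with smooth strictly convex initial data, for which classical theory and the a priori estimates below apply, and to trap this limit between conical comparison hypersurfaces. Since $M_0$ is homeomorphic to $\R^n$, after a rotation $\hat M_0$ is the epigraph of a convex $C^{1,1}_{loc}$ function $x_{n+1}=u(x')$, $x'\in\R^n$, whose recession function has epigraph $\hat C_0$. I would approximate $u$ from below by smooth strictly convex functions $u_i\nearrow u$ (e.g. $u_i=u*\rho_{1/i}+\e_i\sqrt{1+|x'|^2}$), so that $\hat M_0^{(i)}\nearrow \hat M_0$ in $C^{1,1}_{loc}$ and the tangent cones at infinity $\hat C_0^{(i)}$ converge to $\hat C_0$. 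For such data a smooth solution $M_t^{(i)}$ exists, either by \cite{DH} or by a preliminary compact exhaustion using \cite{HI} together with the barriers of the next paragraph, and it stays convex (preservation of convexity under IMCF). Since convexity is a closed condition, everything then reduces to showing that the $M_t^{(i)}$ converge, locally smoothly on $0<t<T$, to a solution with the asserted properties.

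Confinement of the approximants and the maximality of $T$ come from conical barriers. As $\hat M_0$ is convex with recession cone $\hat C_0$, for every $\e>0$ there is a convex cone $\hat{\mathcal K}^{(\e)}\supseteq \hat M_0$ whose link $\hat\Lambda^{(\e)}\subset\mathbb S^n$ satisfies $P(\hat\Lambda^{(\e)})\le P(\hat\Gamma_0)+\e$, and these can be chosen with $\cap_\e\hat{\mathcal K}^{(\e)}=\hat M_0$; by Example~\ref{example1} the conical solution $\mathcal C\hat\Lambda^{(\e)}_t$ exists for $t<\ln|\mathbb S^{n-1}|-\ln P(\hat\Lambda^{(\e)})$, and by the avoidance principle $\hat M_t^{(i)}\subseteq\mathcal C\hat\Lambda^{(\e)}_t$, which (letting $\e\downarrow 0$) confines $\hat M_t^{(i)}$ uniformly in $i$ for every $t<T$. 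Conversely, since $\hat C_0\subseteq\hat M_0$, the inner conical barrier $\mathcal C\hat\Gamma_t\subseteq\hat M_t$ persists, and by (2) of Lemma~\ref{lem-HI1} its link $\hat\Gamma_t$ reaches an equator exactly at $t=T$; a smooth solution on $[0,T']$ with $T'>T$ would therefore have to contain a half-space, on which $H\equiv 0$, which is incompatible with the flow. This identifies $T$ as the maximal time (and shows en route that the tangent cone at infinity of $\hat M_t$ equals $\mathcal C\hat\Gamma_t$).

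The crux is the pointwise lower bound on $H$. From the evolution equation $\partial_t H=\operatorname{div}(H^{-2}\nabla H)-H^{-1}|A|^2$ — the ultra-fast diffusion equation \eqref{eq-ultra} with $m=-1$ — and the convexity pinch $|A|^2\le H^2$, the function $v=H^{-1}$ satisfies $\partial_t v=v^2\Delta_{M_t}v+v^3|A|^2\le v^2\Delta_{M_t}v+v$, from which the plain maximum principle yields only $\inf_{M_t^{(i)}}H\ge e^{-t}\inf_{M_0^{(i)}}H$ — vacuous, since the right side tends to $0$ as $i\to\infty$. The fix, which is the estimate advertised in the abstract, is to localize in scale: near a point $x_0\in M_t^{(i)}$ with $|x_0|=R$, convexity traps $M_0^{(i)}$ below a supporting cone whose vertex is $O(R)$ away and whose link has perimeter converging, as $R\to\infty$ and uniformly in $i$ (because $\hat C_0^{(i)}\to\hat C_0$), to $P(\hat\Gamma_0)$; comparing the flow with the explicit conical solution generated by that supporting cone and feeding the outcome into the maximum principle above gives $H(x,t)\ge\rho(|x|,t)>0$, with $\rho$ depending only on $n$, $\hat\Gamma_0$, and the distance of $\hat\Gamma_t$ from the equator — hence locally bounded away from $0$ on $0\le t\le T-\e$. \textbf{I expect this to be the main obstacle}: the ultra-fast diffusion rules out any purely local (Ecker--Huisken type) estimate, so one is forced to transport the influence of the geometry at infinity all the way inward, and conical solutions are essentially the only comparison objects carrying the correct asymptotics. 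Granting it, the flow is uniformly parabolic on compact subsets of $\R^{n+1}\times(0,T)$, so interior parabolic estimates plus Arzel\`a--Ascoli produce a smooth limit $M_t$, $0<t<T$, convex and confined by the outer cones. Finally $M_t\to M_0$ locally uniformly as $t\to 0$: the flow is expanding, so $\hat M_0\subseteq\hat M_t\subseteq\mathcal C\hat\Lambda^{(\e)}_t$, and as $t\to 0$ the right-hand cone tends to $\hat{\mathcal K}^{(\e)}$, while $\cap_\e\hat{\mathcal K}^{(\e)}=\hat M_0$; this squeezes $\hat M_t\to\hat M_0$, and the $C^1_{loc}$ convergence then follows from the uniform graphical bounds.

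It remains to settle the strict convexity dichotomy. If $\hat M_0$ contains an infinite straight line $\ell$, then by convexity $\hat M_0$ is invariant under translation along $\ell$, so $\hat M_0=\hat N_0\times\R$; the product $N_t\times\R$ of the lower-dimensional solution (valid by Lemma~\ref{lem-elementary} and the convention \eqref{eq-perimeter}) is a solution with this initial data, and the construction above produces exactly this one, which is never strictly convex. Conversely, if $\hat M_0$ contains no line I would run a strong-maximum-principle argument on the Weingarten map $h^i_j$: convexity is preserved, the reaction term in its evolution is nonnegative in a null direction, so if $\lambda_{\min}(h)=0$ at some $(x_1,t_1)$ with $t_1>0$ then $\lambda_{\min}\equiv 0$ on $M_{t_1}$ along a parallel null direction, which integrates back to a straight line inside $\hat M_0$, a contradiction. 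Because $M_t$ is non-compact and the initial data is only $C^{1,1}_{loc}$ (so the flow may a priori be degenerate before this is ruled out), this argument has to be carried out in the framework of viscosity solutions — viscosity sub/supersolutions of the curvature evolution together with a viscosity strong maximum principle — which is the role of viscosity solutions mentioned in the abstract.
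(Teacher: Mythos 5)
Your overall architecture coincides with the paper's: approximate by smooth strictly convex data, confine the approximants between conical barriers built from the tangent cone at infinity, obtain a time-local lower bound on $H$, and conclude by interior parabolic regularity, with the strict-convexity dichotomy handled by a viscosity strong maximum principle for $\lambda_1$. But the decisive step is left unproven, and the mechanism you sketch for it would not deliver it. You correctly note that the naive maximum principle on $v=H^{-1}$ gives only $\inf_{M_t}H\ge e^{-t}\inf_{M_0}H$, which degenerates along the approximating sequence. Your proposed fix --- ``comparing the flow with the explicit conical solution generated by that supporting cone and feeding the outcome into the maximum principle above'' --- cannot work as stated: the avoidance principle between $M_t^{(i)}$ and a conical solution yields set containment, not a pointwise curvature bound, and the naive maximum principle on $H^{-1}$ has no term through which such geometric confinement could enter. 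What is actually needed is Theorem \ref{thm-mainest3}: under the single hypothesis that the compact solution stays inside a round cone, $\la F,e_{n+1}\ra\ge \sin\theta_1\,|F|$, one has $(H|F|)^{-1}\le C(\theta_1)(1+t^{-1/2})$. This is proved by computing the evolution of the spherical angle $\theta$ (Lemma \ref{lem-theta}) and applying the maximum principle to the Pogorelov-type quantity $w=\sec(c\theta)\,t/(Hr)$; the good term $-(c^2-1)(n-|\nabla r|^2)/(H^2r^2)$ appearing at a critical point of $w$ is precisely what converts cone confinement into a lower bound on $H$. Without this estimate (or an equivalent), your construction does not produce a nondegenerate limit, so the gap is essential rather than expository.

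There are also secondary problems. First, $M_0$ homeomorphic to $\R^n$ does not make it an entire graph over a hyperplane (the graph's domain may be a proper convex subset of $\R^n$ with $u\to\infty$ at its boundary), so the global mollification $u*\rho_{1/i}$ is not available verbatim; moreover invoking \cite{DH} or a ``preliminary compact exhaustion'' for the smooth strictly convex non-compact approximants is circular, since that is the existence statement being proved. The paper instead approximates by compact strictly convex bodies inside $B_i(0)\cap\hat M_0$ and runs the compact flows of Gerhardt--Urbas. Second, your strict-convexity argument requires the nontrivial additional step that a null direction of $A$ at a time $t_1>0$ integrates to a straight line lying in $\hat M_0$; the paper avoids this by showing $\mathcal{H}^n(\nu[M_{t}])>0$ directly from the outer round cone and invoking Corollary \ref{cor-strict} via the Gauss-map area formula. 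Third, the inner barrier $\mathcal{C}\hat\Gamma_t\subset\hat M_t$ is not an immediate avoidance statement, since the cone is singular at its vertex and non-compact; the paper devotes Lemma \ref{lem-conicalcomp} to it, mollifying the cone and identifying the blow-down of the resulting solution.
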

%
%
%

\begin{remark}\label{remark-12} \quad

\begin{enumerate}[(i)] 
\item $\hat \Gamma_0$ can be an arbitrary convex set in $\mathbb{S}^n$ which may possibly have  empty interior. {\color{black}(See Definition \ref{def-convexity} and Lemma \ref{lem-convexitysphere} regarding the definition of convexity in $\mathbb{S}^n$.)} In that case the perimeter  $P(\hat \Gamma_0)$ is the  limit of outside areas of  decreasing sequence of convex sets with interior in $\mathbb{S}^n$ which approximate  $\hat \Gamma$. (See Lemma \ref{lem-b5} and Lemma \ref{lem-genouter}.)  According to \eqref{eq-T}, $T=\infty$ when $P(\hat \Gamma_0)=0$ and this happens if only if $\hat \Gamma$ has Hausdorff dimension less than $n-1$. Note that the definition of $P(\cdot)$ is not related with the notion of perimeter used in geometric measure theory.

\item The tangent cone of $M_t$ at infinity, say $\Gamma_t$, also evolves by IMCF in $\mathbb{S}^n$ in some generalized sense (Lemma \ref{lem-compgamma}), and becomes flat as $t\to T^-$ when $T<\infty$. In  Remark \ref{remark-blowdown} we further discuss this in connection with the {\em asymptotic behavior} of $M_t$ as $t\to T$.

\item  According to \eqref{eq-T}, $T=0$ when  $P(\hat\Gamma_0)=|\mathbb{S}^{n-1}|$.  In \cite{CH}, it was shown that   $P(\hat\Gamma_0)=|\mathbb{S}^{n-1}|$ if and only if $\hat \Gamma_0$ is either a hemisphere or a wedge \be\label{eq-wedge}\hat W_{\theta_0}=\mathbb{S}^{n}\cap\bigl(\{ (r\sin\theta, r\cos\theta)\,:\,  \theta\in [0,\theta_0], \text{ and }r>0 \}\times \mathbb{R}^{n-1}\bigr) \text{ for some }\theta_0\in[0,\pi)\ee  up to an isometry of $\mathbb{S}^{n}$.  {\color{black}We show in Theorem \ref{thm-noextension} no solution exists from such a $M_0$. }

\end{enumerate}
\end{remark}

\begin{remark} \label{rem-flatregion}Let us emphasize Theorem \ref{thm-mainexistence} allows  $H=0$ on a possibly non-compact region of $M_0$. Even in that case, $H$ becomes strictly positive for $t >0$ and this is due to Theorem \ref{thm-mainest3}. 
A similar phenomenon was observed for solutions to the Cauchy problem of the ultra-fast diffusion equation \eqref{eq-ultra} with $m<0$  on $\R^n$. See Remark \ref{remark-cauchy} for more  details.    

\end{remark}

%

Next result asserts that $T=T(M_0)$ in Theorem \ref{thm-mainexistence} is the maximal time of existence. The result holds not only for the solutions constructed in Theorem \ref{thm-mainexistence}, but applies to arbitrary solutions.
  \begin{thm}\label{thm-noextension} Let $M_0=\p \hat M_0$ satisfy  the same assumptions as in  Theorem \ref{thm-mainexistence} and $T=T(M_0)$ be given by \eqref{eq-T}. If $T<\infty$,  {\color{black} there is no smooth solution $N_t$ which is the boundary of $\hat N_t$, $\cap_{t>0} \hat N_t= \hat M_0$, and existing $0<t<T+\tau$ some $\tau>0$.} In particular, {\color{black}no solution exists if $T=0$.}  \end{thm}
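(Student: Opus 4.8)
The plan is to derive a contradiction from the assumed existence of a solution $N_t = \partial \hat N_t$ past time $T$ by tracking the evolution of the tangent cone at infinity and using the exponential growth of its link's perimeter. First I would invoke Lemma \ref{lem-compgamma} (and the related comparison statement announced as Lemma \ref{lem-compgamma}/Lemma \ref{lem-b5}, Lemma \ref{lem-genouter}) to show that the link $\hat\Gamma_t$ of the tangent cone of $N_t$ at infinity is a generalized solution to the IMCF in $\mathbb{S}^n$ with initial datum $\hat\Gamma_0$; since $\cap_{t>0}\hat N_t = \hat M_0$, the tangent cones are consistent, so $\hat\Gamma_t \to \hat\Gamma_0$ as $t\to 0^+$. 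The key quantitative input is item (2) of Lemma \ref{lem-HI1}: the perimeter $P(\hat\Gamma_t)$ of the evolving link grows exactly exponentially, $P(\hat\Gamma_t) = e^{t} P(\hat\Gamma_0)$ (at least as a lower bound, which is all I need, when the flow is only a weak/generalized one). Combined with the obvious a priori bound $P(\hat\Gamma_t) \le |\mathbb{S}^{n-1}|$ — valid because a link is always a convex subset of $\mathbb{S}^n$ and its perimeter, as defined in \eqref{eq-perimeter} via approximating outer areas, is maximized by the hemisphere/wedge with value $|\mathbb{S}^{n-1}|$ (this is the rigidity statement from \cite{CH} quoted in Remark \ref{remark-12}(iii)) — we get $e^t P(\hat\Gamma_0) \le |\mathbb{S}^{n-1}|$ for all $t$ in the interval of existence, i.e. $t \le \ln|\mathbb{S}^{n-1}| - \ln P(\hat\Gamma_0) = T$. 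This already contradicts existence on $(0, T+\tau)$, and when $T=0$ it rules out any solution at all.

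The main work, and the main obstacle, is to make the statement ``$\hat\Gamma_t$ is a generalized IMCF in $\mathbb{S}^n$ with $P(\hat\Gamma_t)\ge e^t P(\hat\Gamma_0)$'' rigorous without assuming $\hat\Gamma_0$ has interior or smooth boundary. The clean case is when $\hat\Gamma_t$ has nonempty interior and smooth boundary for $t>0$: then $\Gamma_t = \partial\hat\Gamma_t$ is a genuine compact IMCF in $\mathbb{S}^n$ and $\frac{d}{dt}|\Gamma_t| = |\Gamma_t|$ by the first variation formula (Lemma \ref{lem-HI1}(2)), giving $|\Gamma_t| = e^t|\Gamma_{t_0}|$ and hence, letting $t_0\to 0$ and comparing with the definition of $P(\hat\Gamma_0)$ as the limit of outer areas of approximating convex sets, $P(\hat\Gamma_0) \le e^{-t}|\Gamma_t| \le e^{-t}|\mathbb{S}^{n-1}|$. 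The delicate points are: (a) justifying that the tangent cone of $N_t$ genuinely moves by IMCF in $\mathbb{S}^n$ — this should follow from the cone structure of Example \ref{example1} applied to the blow-down, i.e. $\mathcal{C}\Gamma_t$ is, up to the origin, a lower barrier for $N_t$ because $\hat N_t \supset \hat C_t$ near infinity while $\hat N_t \subset \hat C_t$ globally (convexity plus $\hat C_t$ being the tangent cone), forcing $\hat N_t$'s tangent cone to equal $\hat C_t$ and $\hat C_t$ to solve IMCF; and (b) handling degenerate links (empty interior), where I would approximate $\hat\Gamma_0$ from outside by convex sets $\hat\Gamma_0^{(j)}$ with interior, run the compact IMCF from each (existence from \cite{HI}), apply avoidance/comparison to get $\hat\Gamma_t \subset \hat\Gamma_t^{(j)}$, and pass to the limit using that $P(\hat\Gamma_0) = \lim_j |\hat\Gamma_0^{(j)}|$ by definition \eqref{eq-perimeter}.

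A secondary technical point to address is the convergence $\hat N_t \to \hat M_0$ and its compatibility with the tangent-cone operation: I must argue $\cap_{t>0}\hat N_t = \hat M_0$ implies $\cap_{t>0}\hat C_t(N_t) = \hat C_0(M_0)$, so that the evolving link indeed starts at $\hat\Gamma_0$ and no ``jump at $t=0$'' could have shrunk the tangent cone. This uses that the tangent cone at infinity is continuous under monotone (decreasing) limits of convex sets that exhaust down to a fixed convex set, which follows from Definition \ref{defn-cone} together with Lemma \ref{lem-elementary}. Once these structural facts are in place the contradiction is immediate from the exponential-growth identity, so the proof is short modulo the comparison lemmas already set up earlier in the paper; the true content is entirely in the two monotonicities — exponential growth of link perimeter along IMCF, and the universal ceiling $|\mathbb{S}^{n-1}|$ coming from the convexity of links and the rigidity result of \cite{CH}.
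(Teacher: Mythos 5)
Your overall mechanism -- exponential growth of the link's perimeter under the flow against the universal ceiling $|\mathbb{S}^{n-1}|$ -- is indeed the heuristic behind the paper's proof, and your treatment of the case where $\hat\Gamma_0$ has non-empty interior can be made to work (modulo the facts that Lemma \ref{lem-compgamma} is only established for the \emph{innermost candidate} solution, so you must first pass from $N_t$ to $\hat M_t\subset\hat N_t$ via Lemma \ref{lem-noncptavoid}, and that the conical comparison requires the link to lie in an \emph{open} hemisphere, which $\hat\Gamma_0$ need not do -- the paper handles this with the $\psi_\e$-perturbation of $\hat M_0$ and the claim $P(\hat\Gamma_{\e,0})\to P(\hat\Gamma_0)$).

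The genuine gap is your item (b), the degenerate case where $\hat\Gamma_0$ has empty interior. There you propose to approximate $\hat\Gamma_0$ \emph{from outside} and pass to the limit; but outer approximation combined with the avoidance principle only yields $\hat\Gamma_t\subset\hat\Gamma_t^{(j)}$ and hence the \emph{upper} bound $P(\hat\Gamma_t)\le e^tP(\hat\Gamma_0)$, which is useless for the contradiction. The inequality you actually need, $P(\hat\Gamma_t)\ge e^tP(\hat\Gamma_0)$, requires comparison with smooth strictly convex cones sitting \emph{inside} $\mathcal{C}\hat\Gamma_0$, and when $\hat\Gamma_0$ has empty interior no such cone exists -- note that $P(\hat\Gamma_0)=2|\Gamma_0|$ counts both sides of the degenerate link, and this factor of $2$ is exactly what makes the $T=0$ wedge case sharp. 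The paper closes this gap with Proposition \ref{prop-interior}: the link of the tangent cone \emph{instantaneously acquires interior} for $t>0$, proved via the expanding thin-disk barrier of Claim \ref{claim-barrier}; only after a short waiting time $t_0$ can one insert a smooth strictly convex inner cone whose link has area close to $P(\hat\Gamma_0)$ (using Lemma \ref{lem-genouter} to control the area at time $t_0$) and run the flattening argument. This instantaneous generation of interior is a substantive geometric fact and does not follow from the approximation lemmas you cite; without it your argument does not reach the cases the theorem is most concerned with, including $T=0$. A secondary slip: your barrier logic in (a) asserts $\hat N_t\subset\hat C_t$ "globally by convexity," but the correct inclusion is the reverse one, $\hat C_t+p\subset\hat N_t$ (e.g.\ a paraboloid is not contained in any translate of its tangent cone, which is a ray); relatedly, the tangent cone at infinity is \emph{not} continuous under decreasing intersections, though monotonicity of $P$ is all you actually need at $t=0$.
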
 
\medskip 
Non-compact IMCF in $\R^{n+1}$  was  first   considered by the second author and  G. Huisken in    \cite{DH},
where they  established 
the existence and uniqueness of smooth solution  to the IMCF, under the assumption that the initial 
 hypersurface $M_0$ is  an  entire $C^2$  graph, $x_{n+1}=u_0(x')$  with $H >0$,  in the following two cases:  \begin{enumerate}[(i)]
\item $M_0$ has {\em super linear growth}  at infinity and it is {\em strictly star-shaped},    that is
$  H \langle F - x_0, \nu \rangle \geq \delta >0$ 
 holds,  for some $ x_0 \in \R^{n+1}$; 
\item $M_0$ a {\em convex graph} satisfying  $0 < c_0 \leq H\,  \langle F - x_0, e_{n+1} \rangle \leq C_0 < +\infty$, for some $x_0 \in \R^{n+1}$  and lies   between {\em two round cones of the same aperture}, that is    \be\label{eqn-uuu}\alpha_0 |x'|\le u_0(x')\le \alpha_0 |x'| + k, \qquad \alpha_0 >0, \, k >0. 
\ee 
\end{enumerate}

In the first case, a unique smooth solution exists up to time $T=\infty$, while in the second case  a unique smooth convex solution  $M_t$ exists for $t\in[0,T)$ where $T>0$ is the time when the round conical solution from $\{x_{n+1} = \alpha_0 |x'|\}$ becomes flat. In the  latter case,  the  solution $M_t$ lies between  two evolving round cones and becomes flat as $t\to T$.  
To derive a local lower bound of $H$, a parabolic Moser's iteration argument was used along with a variant of Hardy's  inequality, which plays a similar role as the {Micheal-Simon Sobolev inequality} used in \cite{HI}.   

\smallskip 

Theorem \ref{thm-mainexistence} and the  results in \cite{DH} show that convex surfaces with {\em linear growth at infinity}  have 
{\em critical behavior}  in the sense that in this case the {\em  maximal  time of existence  is finite}  and it  depends on the {\em behavior  at infinity} of the initial data. However, while the techniques  in \cite{DH} only treat this critical linear case under the  condition \eqref{eqn-uuu},
Theorem \ref{thm-mainexistence} allows any behavior at infinity. Moreover,  the techniques in \cite{DH} require to assume that $H$ is
globally controlled from
below a{\color{black}t} initial time, namely that $  H \langle F - x_0, \nu \rangle \geq \delta >0$ in the case of {\em super-linear growth} and 
$H\,  \langle F - x_0, e_{n+1} \rangle \geq c >0$ in the case of {\em linear growth}. 

In this work we depart from the techniques in \cite{DH},\cite{HI}, and establish  a  priori bound on  $H^{-1}$ which is {\em local in time}.  For this, we develop  a new method  based on the  maximum principle rather than the integrations used in \cite{DH},\cite{HI}.   
Our key  estimate roughly says that a convex solution has a global bound on $(H|F|)^{-1}$ as long as a nontrivial convex cone is supporting the solution  from outside. 

 \begin{thm}\label{thm-mainest3}

Let $F:M^n\times [0,T] \to \R^{n+1}$, $n \geq 2$ and $T >0$,    be a compact smooth convex solution to the IMCF and suppose  there is  $\theta_1 \in (0,\pi/2)$ for which  
\be\label{eqn-theta1}
 \la F, e_{n+1}\ra \ge  {\sin\theta_1}\, |F|\quad  \text{ on }M^n\times[0,T].
 \ee
Then \be\label{eq-speedbd}\fr{1}{H|F|}\le C\left( 1+ \fr{1}{t^{1/2}}\right) \quad\text{on}\quad M^n\times [0,T] \ee for a constant $C=C(\theta_1)>0$.
\end{thm}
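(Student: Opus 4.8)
My plan is to deduce \eqref{eq-speedbd} from a comparison with barriers built out of the supporting cone, rather than from a scalar maximum principle applied directly to $(H|F|)^{-1}$: the latter is not available since, while $\Delta F=-H\nu$, one has $\Delta|F|^2=-2H\la F,\nu\ra+2n$, so under the operator $\PP=\p_t-H^{-2}\Delta$ the quantity $(H|F|)^{-1}$ acquires a positive cubic term $\bigl(n-1+\la F,\nu\ra^2|F|^{-2}\bigr)(H|F|)^{-3}$ and need not remain bounded by itself. First I would record the evolution equations $\p_t H=-\Delta(H^{-1})-|A|^2H^{-1}$, $\p_t\nu=-\na(H^{-1})$ and $\Delta F=-H\nu$, which give $\PP H^{-1}=|A|^2H^{-3}$ and $\PP\la F-x_0,\nu\ra=|A|^2H^{-2}\la F-x_0,\nu\ra$ for every fixed $x_0\in\R^{n+1}$. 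Since $M_t$ is compact and convex, \eqref{eqn-theta1} forces $\hat M_t\subset\hat K_{\theta_1}:=\{\la x,e_{n+1}\ra\ge\sin\theta_1|x|\}$, and $\hat M_t$ is increasing along the flow, so I may fix $x_0$ in the interior of $\hat M_0$; then $\sigma:=\la F-x_0,\nu\ra>0$ on every $M_t$. The two identities are tuned so that the $|A|^2$ terms cancel in the evolution of $Q:=(H\sigma)^{-1}$, leaving the transport equation $\PP Q=2H^{-2}\sigma^{-1}\la\na Q,\na\sigma\ra$; hence $\max_{M_t}Q$ is non-increasing and $H\sigma$ cannot degenerate along the flow. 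This is a useful auxiliary fact, but not yet the assertion, which is stated through $|F|$ and must be uniform in $M_0$.

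The heart of the argument is to upgrade this using \eqref{eqn-theta1} and the geometry of the cone. The round cone $C:=\p\hat K_{\theta_1}$ is scale-invariant, generates a smooth IMCF solution away from its vertex (its link being a round $(n-1)$-subsphere of $\mathbb{S}^n$ moving by IMCF on $\mathbb{S}^n$, cf.\ Example \ref{example1}), and carries $H\,|F|\equiv(n-1)\tan\theta_1=:c_n(\theta_1)>0$. For a point $p_0\in M_t$ with tangent plane $\Pi$, convexity together with \eqref{eqn-theta1} gives $\hat M_s\subset\hat L:=\hat K_{\theta_1}\cap\{x:\la x-F(p_0,t),\nu\ra\le0\}$ for all $s\le t$. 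I would smooth $\hat L$ near its vertex and along the ridge $\p\hat K_{\theta_1}\cap\Pi$ at a scale $\rho$, producing a smooth convex barrier on which one still has $H\,|F|\ge c(\theta_1)>0$, uniformly in $\rho$ (on a spherical cap of radius $\rho$, $|F|\gtrsim\rho$ and $H\sim\rho^{-1}$). Evolving such barriers by IMCF --- through compact star-shaped truncations, which are admissible by \cite{Ge2,Ur} --- with smoothing scale of order $\sqrt{t}\,|F(p_0,t)|$, and comparing with $M_s$ on $[0,t]$, should yield $H(p_0,t)\ge c(\theta_1)\,t^{1/2}|F(p_0,t)|^{-1}$; together with the non-transient bound $H|F|\ge c(\theta_1)$ that the cone barrier produces once the flow has relaxed, this is \eqref{eq-speedbd}. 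The exponent $1/2$ is the healing rate: where $H_0=0$ --- which \eqref{eqn-theta1} permits --- the mean curvature satisfies, to leading order, $\p_t H=\Div(H^{-2}\na H)-|A|^2H^{-1}$, i.e.\ the ultra-fast diffusion equation \eqref{eq-ultra} with $m=-1$ plus a reaction term, whose self-similar profile on a flat region of diameter $L$ behaves like $H\sim L^{-1}t^{1/2}$, and \eqref{eqn-theta1} forces $L\le C(\theta_1)|F|$.

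The step I expect to be the main obstacle is carrying out this barrier comparison rigorously while keeping all constants independent of $M_0$. One must check that the smoothed cones are admissible sub/supersolutions of the IMCF up to the vertex scale, that a barrier with the correct smoothing scale can be positioned so as to touch $M_t$ from the outside exactly at $p_0$ (so that convexity gives $H(p_0,t)\ge$ the barrier's mean curvature there), and that the diameters $L$ of the flat regions of $M_t$, and in particular the ratios $L/|F|$, are controlled purely in terms of $\theta_1$ and $n$. This last point is subtle precisely because the ``bottom cap'' of $M_t$, near the vertex of $\hat K_{\theta_1}$, can itself be almost flat --- which is exactly what defeats any naive scalar maximum principle built from $\la\cdot,e_{n+1}\ra$ or from a support function.
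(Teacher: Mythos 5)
Your preliminary observations are correct: the cancellation of the $|A|^2$ terms in $\square\bigl((H\la F-x_0,\nu\ra)^{-1}\bigr)$ does give a transport-type equation whose maximum is non-increasing, the cone $\p\hat K_{\theta_1}$ does satisfy $H|F|\equiv(n-1)\tan\theta_1$, and you are right that the first fact is useless here because the theorem's constant must not depend on $M_0$. But the central step --- converting the cone condition into the pointwise bound $H\ge c(\theta_1)t^{1/2}|F|^{-1}$ by a barrier comparison --- has a genuine gap, and I do not believe it can be closed in the form you describe. The structural problem is that IMCF is an \emph{expanding} flow: an outer barrier $\hat N_s$ with $\hat N_0\supset\hat M_0$ satisfies $\hat N_s\supset\hat N_0$ and moves \emph{away} from the solution, so you cannot arrange that $\p\hat N_t$ touches $M_t$ at the prescribed point $p_0$ at the later time $t$; conversely, the static set $\hat L=\hat K_{\theta_1}\cap\{\la x-F(p_0,t),\nu\ra\le0\}$ does touch $M_t$ at $p_0$, but its boundary is a piece of the tangent plane there, so the touching only yields $H(p_0,t)\ge0$. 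Relatedly, your claim that the smoothed barrier satisfies $H|F|\ge c(\theta_1)$ uniformly in the smoothing scale is false: smoothing the vertex and the ridge does not remove the flat interior of the face $\Pi\cap\hat K_{\theta_1}$, on which $H\equiv0$ --- and $p_0$ lies precisely on that face. Replacing the face by a shallow spherical cap of curvature radius $\sim|F|/\sqrt{t}$ makes the barrier bulge inward and fail to contain $\hat M_0$, and in any case the forward evolution still destroys the touching at time $t$. The ultra-fast-diffusion heuristic $H\sim L^{-1}t^{1/2}$ is a good guide to the exponent but is not a proof.

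The paper obtains the estimate by a parabolic maximum principle rather than by barriers, and this is where the cone condition actually enters analytically. One computes the evolution of the spherical angle $\theta(x)=\arccos(\la x,e_{n+1}\ra/|x|)$ (Lemma \ref{lem-theta}), whose key feature is the good term $-\tfrac{1}{H^2r^2}\tfrac{n-|\nabla r|^2}{\tan\theta}$, and applies $\square$ to
\[
w:=\frac{t\,\sec(c\theta)}{H\,|F|},\qquad c:=\frac{\pi-\theta_1}{\pi-2\theta_1}>1,
\]
which is well defined since \eqref{eqn-theta1} forces $c\theta<\pi/2$. At a spatial maximum of $w$ the first-order condition lets one trade $|\nabla H^{-1}|^2/H^{-2}$ against the gradient terms of $\sec(c\theta)$ and $|F|^{-1}$, and the choice $c>1$ leaves a leftover good term $-(c^2-1)(n-|\nabla r|^2)/(H^2r^2)$ that absorbs the positive cubic term you identified as the obstruction to a naive maximum principle, together with $|A|^2/H^2\le1$ from convexity. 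This yields $w^2(p_0,t_0)\le Ct_1(t_1+1)$ and hence \eqref{eq-speedbd}. So the angle function $\theta$ (or equivalently $\sec\theta=\la F,e_{n+1}\ra/|F|$, composed with a function as in \eqref{eqn-functions}) is the missing ingredient: it is the quantity that converts the supporting-cone hypothesis into a usable negative term in the evolution, which is exactly the role your barriers cannot play.
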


The compactness assumption on $M_t$ above  {\em will only be used to apply maximum principle} and will not 
affect the application of the estimate  in  proving  of our non-compact result,  Theorem \ref{thm-mainexistence}, as we will approximate non-compact solutions by compact ones.  Note the estimate does not depend on initial bound on $(H|F|)^{-1}$, which will allow initial data with flat regions as described in Remark \ref{rem-flatregion}.  Moreover, the new method developed while showing Theorem \ref{thm-mainest3} leads us to a new proof of  Theorem 1.1 in \cite{HI}, the $H^{-1}$ estimate  for compact, star-shaped solutions.  This is included in Theorem \ref{thm-equivHI} in the appendix. In fact, one expects  that similar estimates as in Theorem \ref{thm-equivHI}  can  be possibly derived   for the IMCF in other ambient spaces, including some positively curved spaces or asymptotically flat spaces, using this new method and this generalizes the results of \cite{HI,li2017inverse, zhou2017inverse}.   See in \cite{HI3} for a consequence of such an estimate when this is shown in asymptotically flat ambient spaces. 

\smallskip

\begin{remark}\label{rmk-14} Recently, the first author and P.-K. Hung  in \cite{CH}  addressed  the IMCF on convex solutions allowing singularities on $M_0$. Using Theorem \ref{thm-mainest3} as  a key ingredient, \cite{CH} shows the tangent cone obtained after blowing-up at a singularity point evolves by the IMCF. As a corollary, one can  consider an arbitrary non-compact convex hypersurface $M_0$ in Theorem \ref{thm-mainexistence} and obtain the following  necessary and sufficient condition for the existence of a smooth solution: {\em for an arbitrary non-compact convex $M_0$ with $T(M_0)>0$, there is a smooth solution if and only if $M_0$ has density one  everywhere. i.e. $\Theta_0(p) = \lim_{r\to0} \fr{|B_r(p) \cap M_0 |}{\omega_n r^n} =1$  for all $p\in M_0$.}  See \cite{CH} for more details. 
\end{remark}

\medskip
{\em A   brief outline   of this paper}  is as follows:   In Section \ref{sec-Preliminaries}, we introduce basic  notation, evolution equations of basic geometric quantities, and prove some useful identities.  
 Section \ref{sec-main-non com2} is devoted to the proof of  main a priori estimate Theorem \ref{thm-mainest3}.  In Section \ref{sec-existence}, we prove the long time existence of solution (Theorem \ref{thm-mainexistence} and Theorem \ref{thm-noextension}) via an approximation argument. Here, the passage to a limit relies on the estimate Theorem \ref{thm-mainest3}. In Appendix \ref{sec-strict}, we prove the convexity of solution is preserved and show the solution become{\color{black}s} strictly convex immediately unless the lowest principle curvature $\lambda_1$ is zero everywhere. This will be shown for the solutions to the IMCF in space forms as this adds no difficulty in the proof but could be useful in other application. In Appendix \ref{appendix-starshaped}, we give an alternative proof of $H^{-1}$ estimate shown in \cite{HI} using a maximum principle argument, showing how the star-shapedness condition can be incorporated in our method. 
Finally,  in Appendix \ref{appendix-approximation}  we show the approximation theorems of convex hypersurfaces in $\mathbb{R}^{n+1}$ and $\mathbb{S}^{n}$ that are used throughout the paper. 

\medskip 
{\color{black} \noindent{\bf Acknowledgement: }  The authors wish to express their gratitude to Gerhard Huisken and Pei-Ken Hung for 
stimulating and useful discussions on the inverse mean curvature flow.} 

\section{Preliminaries}\label{sec-Preliminaries}

In this section we present some  basic preliminary results. Let us  begin by clarifying some notions and simple facts  from convex geometry. 
Convex hypersurfaces  are studied  in both convex geometry and differential geometry of submanifolds. As a result  there are different notions of convexity preferred in different subjects. In this paper, we use the following definition. 

\begin{definition}[Convex hypersurfaces in $\mathbb{R}^{n+1}$ and $\mathbb{S}^{n}$] \label{def-convexity} \quad

\begin{enumerate}[{ (i)}] 
\item  A hypersurface  $M_0 \subset  \mathbb{R}^{n+1}$ is called convex   if it is the boundary of a convex set $\hat M_0$ of  non-empty interior and  $\hat M_0 \neq \mathbb{R}^{n+1}$.
\item A set $\hat \Gamma_0\subset \mathbb{S}^n$ is  convex if for all $p$ and $q$ in $\hat\Gamma_0$ at least one minimal geodesic connecting the two points is contained in $\hat \Gamma_0$. $\Gamma_0$ is a convex hypersurface in $\mathbb{S}^n$ if it is the boundary of a convex set {\color{black}$\hat\Gamma _0$} with non-empty interior and $\hat \Gamma_0\neq \mathbb{S}^{n}$.
\item A  $C^2$ convex hypersurface is strictly convex at a given point if the second fundamental form with respect to the inner  the normal is positive definite. 
\end{enumerate}
\end{definition}

There is a useful characterization of convexity in $\mathbb{S}^n$ which is immediate from Definition \ref{def-convexity}.

\begin{lemma}\label{lem-convexitysphere}
 A set $\hat\Gamma_0\subset $ $\mathbb{S}^n$ is convex if and only {\color{black}if} it is connected and 
$ \mathcal{C} \hat \Gamma_0 := \{r x\in \R^{n+1}:  r\ge0, \,   x\in {\color{black}\hat\Gamma_0}\}$ 
 is  convex in $\mathbb{R}^{n+1}$.
\end{lemma}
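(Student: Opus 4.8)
The plan is to prove both directions of the equivalence by reducing everything to the definition of convexity of the solid cone $\mathcal{C}\hat\Gamma_0$ in $\R^{n+1}$ and relating geodesics on $\mathbb{S}^n$ to line segments in $\R^{n+1}$.

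\medskip

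First I would record the elementary geometric fact that for two distinct points $p,q\in\mathbb{S}^n$ that are not antipodal, the minimal geodesic arc from $p$ to $q$ in $\mathbb{S}^n$ is exactly the radial projection onto $\mathbb{S}^n$ of the straight segment $[p,q]\subset\R^{n+1}$; that is, a point $x$ lies on this geodesic arc if and only if $x=y/|y|$ for some $y\in[p,q]$, equivalently if and only if the ray $\{rx:r\ge 0\}$ meets the segment $[p,q]$. For antipodal $p,q$ no segment through the origin issue arises since $0\notin[p,q]$ only when $p=-q$, and in that degenerate case the characterization must be handled separately (a convex $\hat\Gamma_0$ cannot contain a pair of antipodal points unless it is all of $\mathbb{S}^n$, which is excluded, or it is a closed hemisphere-type set; I would note connectedness plus the cone-convexity forces the cone to be contained in a halfspace, ruling out interior antipodal pairs).

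\medskip

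For the forward direction, suppose $\hat\Gamma_0$ is convex in the sense of Definition \ref{def-convexity}(ii). Connectedness is part of that definition (or follows, since $\hat\Gamma_0$ being arcwise-connected by geodesics in it makes it connected). To show $\mathcal{C}\hat\Gamma_0$ is convex, take $y_1,y_2\in\mathcal{C}\hat\Gamma_0$, so $y_i=r_i x_i$ with $r_i\ge 0$ and $x_i\in\hat\Gamma_0$. If either $r_i=0$ the segment $[y_1,y_2]$ is a ray-segment from the origin into $\mathcal{C}\hat\Gamma_0$, which is in $\mathcal{C}\hat\Gamma_0$ by definition; so assume $r_1,r_2>0$. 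A point on $[y_1,y_2]$ has the form $z=(1-s)r_1x_1+sr_2x_2$; writing $z=|z|\,\frac{z}{|z|}$, the unit vector $z/|z|$ is a convex combination of $x_1$ and $x_2$ after renormalization, hence lies on the segment $[x_1,x_2]$ projected to $\mathbb{S}^n$, i.e. on the minimal geodesic arc between $x_1$ and $x_2$ (for the non-antipodal case; the antipodal case is excluded as above once we use connectedness). By convexity of $\hat\Gamma_0$ that geodesic arc — or the one contained in $\hat\Gamma_0$ — lies in $\hat\Gamma_0$, so $z/|z|\in\hat\Gamma_0$ and therefore $z\in\mathcal{C}\hat\Gamma_0$. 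Hence $\mathcal{C}\hat\Gamma_0$ is convex.

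\medskip

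For the converse, suppose $\hat\Gamma_0$ is connected and $\mathcal{C}\hat\Gamma_0$ is convex in $\R^{n+1}$. Given $p,q\in\hat\Gamma_0$, the segment $[p,q]$ lies in $\mathcal{C}\hat\Gamma_0$ by convexity of the cone, so its radial projection to $\mathbb{S}^n$ lies in $\hat\Gamma_0$ (since $\mathcal{C}\hat\Gamma_0\cap\mathbb{S}^n=\hat\Gamma_0$, using that $0\notin[p,q]$ when $p\ne -q$). That projection is precisely a minimal geodesic arc from $p$ to $q$ when $p,q$ are not antipodal, giving the required arc in $\hat\Gamma_0$. The remaining point is to rule out the existence of an antipodal pair $p=-q$ in $\hat\Gamma_0$: if both $p$ and $-p$ were in $\hat\Gamma_0$, then the line through the origin spanned by $p$ lies in $\mathcal{C}\hat\Gamma_0$; convexity of the cone then forces it to contain the whole affine span issues — more carefully, a convex cone containing a full line through its apex splits off that line as a factor, and combined with $\hat\Gamma_0\neq\mathbb{S}^n$ and connectedness one checks the two geodesic semicircles joining $p$ to $-p$ are both forced into $\hat\Gamma_0$, so the condition in Definition \ref{def-convexity}(ii) still holds. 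I expect the main obstacle to be exactly this bookkeeping around antipodal points and the degenerate lower-dimensional cases (when $\hat\Gamma_0$ has empty interior in $\mathbb{S}^n$, as allowed in Remark \ref{remark-12}(i)); the generic, full-dimensional, non-antipodal case is a direct translation between segments and projected geodesics as sketched above.
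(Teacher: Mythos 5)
The paper offers no proof of this lemma at all (it is declared ``immediate from Definition \ref{def-convexity}''), so there is nothing to compare against line by line; your radial-projection mechanism --- a point of $\mathbb{S}^n$ lies on the minimal geodesic from $x_1$ to $x_2$ (non-antipodal) iff it is $z/|z|$ for some $z$ on the segment $[x_1,x_2]$, together with $\mathcal{C}\hat\Gamma_0\cap\mathbb{S}^n=\hat\Gamma_0$ --- is certainly the intended argument and handles both directions in the generic case correctly.

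The genuine gap is your treatment of antipodal pairs, and it matters because that is exactly where the connectedness hypothesis earns its place in the statement. Your preliminary claim that a convex $\hat\Gamma_0$ cannot contain antipodal points unless it is a hemisphere-type set is false: a great circle, a closed semicircle, and the wedges $\hat W_{\theta_0}$ of \eqref{eq-wedge} are all convex in the sense of Definition \ref{def-convexity}(ii) and contain antipodal pairs, and containment of the cone in a half-space does not exclude antipodal pairs on the bounding hyperplane. Fortunately the forward direction needs no exclusion: if $x_2=-x_1$, the segment $[r_1x_1,r_2x_2]$ is the union of a sub-ray of $\R_{\ge0}x_1$ and a sub-ray of $\R_{\ge0}x_2$, both already in $\mathcal{C}\hat\Gamma_0$. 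In the converse direction, however, your assertion that ``the two geodesic semicircles joining $p$ to $-p$ are both forced into $\hat\Gamma_0$'' is not an argument (there are infinitely many minimal geodesics between antipodes, and no two distinguished ones). The correct fix: if $p,-p\in\hat\Gamma_0$, connectedness forces $\hat\Gamma_0\ne\{p,-p\}$, so pick $x\in\hat\Gamma_0\setminus\{\pm p\}$; since $\mathcal{C}\hat\Gamma_0$ is a convex cone containing $p$, $-p$ and $x$, it contains the conical hull $\{rp+sx:\ r\in\R,\ s\ge0\}$, whose intersection with $\mathbb{S}^n$ is precisely the semicircle from $p$ through $x$ to $-p$; this is a minimal geodesic contained in $\hat\Gamma_0$, as required. (One should also note that connectedness cannot be dropped: $\hat\Gamma_0=\{p,-p\}$ has convex cone $\R p$ but fails Definition \ref{def-convexity}(ii), and this two-point antipodal configuration is the only way $\mathcal{C}\hat\Gamma_0$ can be convex while $\hat\Gamma_0$ is disconnected.) With these repairs your proof is complete.
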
 
 We prefer Definition \ref{def-convexity} to the other one that  defines convexity  through certain properties of the embedding or immersion since we will deal with convex hypersurfaces of low regularity.  These two notions are, however, equivalent under suitable assumptions. For example, if $M_0\subset \mathbb{R}^{n+1}$ (or $\Gamma_0 \subset\mathbb{S}^n$) is a convex hypersurface, then it is a complete connected embedded submanifold. Furthermore, if  $M_0$ (or $\Gamma_0$) is   $C^2$, then the second fundamental form with respect to the inner normal is nonnegative definite.  The converse question, namely under what conditions  an immersed or embedded hypersurface of nonnegative sectional curvature (or semi-definite second fundamental form) bounds  a convex set, has a long history and has been answered, for instance, by Hadamard \cite{Hadamard}, Sacksteder-van Heijenoort \cite{Sacksteder, Heijenoort}, H. Wu \cite{Wu}, do Carmo-Warner \cite{CW}, Makowski-Scheu{\color{black}er} \cite{makowski2013rigidity} under different assumptions. We refer {\color{black} the reader} to the results and references cited in these papers.  
 
\smallskip

The following simple observation will be used throughout this paper.

\begin{lemma}\label{lem-elementary} Let $M_0=\p \hat M_0$ be the boundary of a closed convex set with interior $\hat M_0 \subset \mathbb{R}^{n+1}$, that is  $M_0$ is a convex hypersurface in $\mathbb{R}^{n+1}$. Then either $M_0=\mathbb{R}^n$ or $M_0= \mathbb{R}^k \times N_0$,  for some $0\le k < n$ and $N_0=\p \hat N_0$ where $\hat N_0\subset \mathbb{R}^{n+1-k}$ is a closed convex set with interior which contains no infinite line. Moreover, such $N_0$ is either homeomorphic to $\mathbb{S}^{n-k}$  or $\mathbb{R}^{n-k}$. 

\begin{proof}
If   $\hat M_0$ contains an infinite straight line, then  $\hat M_0$  splits off in the direction of the line by the following elementary argument: suppose a line $\{t e_{n+1} \in\mathbb{R}^{n+1} \,:\, t\in\mathbb{R} \}$ is contained in $\hat M_0$ and let us denote its cross-sections $\hat \Omega_l \times\{l\} := \hat M_0 \cap \{x_{n+1} =l\}$ for $\l\in\mathbb{R}$. By convexity, for any $0\le t_1<t_2$  the set $\frac{t_2-t_1}{t_2} \hat \Omega_0 \times\{t_1\}$ is contained in $\hat M_0\cap \{x_{n+1} =t_1\}$. By taking $t_2\to \infty$ while fixing $t_1$, we have $\hat \Omega_{0}\times \{t_1\} \subset \hat M_0 \cap \{x_{n+1}=t_1\}$. We can do a similar argument for $t_2<t_1 \le 0$ and thus $\hat \Omega_0 \times \mathbb{R} \subset \hat M_0$. Similarly, we can do the   same argument  for all other sections $\hat \Omega_{l}\times\{l\}$  and obtain that $\Omega_{l}\times \mathbb{R} \subset \hat M_0$. Therefore $\hat \Omega_{l_1}=\hat \Omega_{l_2}$ for all $l_1\neq l_2$ and $\hat M_0 = \hat \Omega_0 \times \mathbb{R}$. By repeating this splitting,  we conclude that $\hat M_0 = \hat N_0 \times \R^k$,   for some $k\ge 0$ where  $\hat N_0$ does not contain any infinite lines. In this case, a  classical 
simple result  in convex geometry (see for instance Lemma 1 in \cite{Wu}), implies that $\p \hat N_0$ is homeomorphic to either ${\color{black}\mathbb{S}}^{n-k}$ or $\mathbb{R}^{n-k}$.

\end{proof}
\end{lemma}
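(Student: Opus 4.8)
The plan is to reduce to the case where $\hat M_0$ contains no infinite line by repeatedly splitting off a Euclidean factor, and then to invoke the classical topological dichotomy for line-free convex bodies.

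First I would establish the single-line splitting: if $\hat M_0$ contains a line, which after an ambient rotation I may take to be the $x_{n+1}$-axis, then $\hat M_0=\hat\Omega\times\R$ for a closed convex $\hat\Omega\subset\R^n$ with non-empty interior. The argument is elementary convexity. Writing $\hat\Omega_l\times\{l\}$ for the slice $\hat M_0\cap\{x_{n+1}=l\}$, I take a point of $\hat\Omega_l$ together with the point $s\,e_{n+1}\in\hat M_0$ on the axis, form their convex combination, which lies in a slice $\hat\Omega_{l'}$ with $l<l'<s$, and let $s\to\pm\infty$; the interpolation coefficient tends to $1$, and closedness of the slices then forces $\hat\Omega_l=\hat\Omega_{l'}$ for all $l,l'$, hence $\hat M_0=\hat\Omega_0\times\R$.

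Next I would iterate. The directions in which $\hat M_0$ contains a line form a linear subspace $L$ (its lineality space — a routine consequence of the previous step together with convexity and closedness), and applying the splitting along an orthonormal basis of $L$, each step lowering the ambient dimension by one, yields $\hat M_0=\hat N_0\times\R^k$ with $k=\dim L$, where $\hat N_0\subset\R^{n+1-k}$ is a closed convex body (non-empty interior, proper subset) containing no line. Taking boundaries gives $M_0=\R^k\times N_0$ with $N_0=\p\hat N_0$. Since $\hat N_0$ still has non-empty interior and is proper, $k\le n$; the borderline $k=n$ forces $\hat N_0\subset\R$ to be a half-line (so $N_0$ is a point and $M_0=\R^n$) or, for a slab, a bounded interval giving two points, which a connected convex hypersurface excludes.

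Finally, for the line-free factor I would appeal to the standard classification: the boundary of a closed convex body in $\R^{n+1-k}$ is homeomorphic to $\mathbb{S}^{n-k}$ when the body is bounded (radial projection from an interior point) and to $\R^{n-k}$ when it is unbounded and contains no line — the latter being Lemma~1 of \cite{Wu}. The elementary splitting in the first two steps is the real content of the proof and is fully self-contained; the only nontrivial ingredient is this topological fact in the unbounded case, which I would simply quote rather than reprove. Its proof is a known exercise in convex geometry — identifying $\p\hat N_0$ with a graph over $\R^{n-k}$, equivalently with an open hemisphere under radial projection — and the point where the no-line hypothesis is indispensable is exactly the global injectivity and openness of that identification; this is the one place where I expect to lean on an external reference rather than argue from scratch.
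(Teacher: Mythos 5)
Your proposal is correct and follows essentially the same route as the paper: the same convex-combination limit argument showing that all cross-sections along a contained line coincide, iteration of the splitting (the paper simply repeats it, you organize it via the lineality space) to obtain $\hat M_0=\hat N_0\times\R^k$ with $\hat N_0$ line-free, and the same appeal to Lemma 1 of \cite{Wu} for the $\mathbb{S}^{n-k}$ versus $\R^{n-k}$ dichotomy. Your brief treatment of the borderline $k=n$ case (half-space and slab) is a detail the paper's proof leaves implicit but does not alter the argument.
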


\smallskip

We next  derive some properties of smooth solutions to IMCF.  Let $\nabla := \nabla^{g(t)}$ and $\Delta := \Delta_{g(t)}$  denote the connection and Laplacian on $M^n$ with respect to  the induced metric $g_{ij}(t)=\langle \fr{\partial F}{\partial x^i},\fr{\partial F}{\partial x^j} \rangle $.  Recall that on a local system of coordinates  $\{ x^i\}$ on $M^n$,  
\be\label{eqn-FF}\fr {\partial^2 F }{\partial x^i \partial x^j }= -h_{ij}\n+ \Gamma_{ij}^k \fr{\partial F}{\partial x^k}\qquad\text{ and}\qquad\left \langle \fr{\partial F}{\partial x^j}, \fr{\partial \n}{\partial x^i}\right\rangle = h_{ij}
\ee
where $\nu$ denotes the outer unit normal. 
We also define the operator $$\square := \left(\partial_t - \fr{1}{H^2}\Delta\right)$$ and use it  frequently as this is the linearized operator of the IMCF.  The IMCF or generally curvature flows of homogeneous degree $-1$, have the following scaling property which we will  frequently use:
 \begin{lemma}[Scaling of IMCF]\label{lem-scaling} If $M^n_t\subset \mathbb{R}^{n+1}$ is a solution to the IMCF, then $\tilde M^n_t= \lambda \,  M^n_t$ is again a solution for $\lambda>0$. \end{lemma}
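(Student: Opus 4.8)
The plan is to verify directly that $\tilde F(p,t) := \lambda F(p,t)$ solves the IMCF equation \eqref{eqn-IMCF} whenever $F$ does. First I would record how the basic geometric quantities transform under the dilation $x \mapsto \lambda x$ of $\R^{n+1}$. The induced metric scales as $\tilde g_{ij} = \lambda^2 g_{ij}$, so $\tilde g^{ij} = \lambda^{-2} g^{ij}$. The outward unit normal is scale-invariant: since $\lambda>0$, the dilation preserves orientation and angles, hence $\tilde\nu(p,t) = \nu(p,t)$ (as vectors in $\R^{n+1}$, after the canonical identification). The Weingarten map satisfies $\tilde h_{ij} = \lambda h_{ij}$ (one can see this from the second relation in \eqref{eqn-FF}, $\langle \partial_j \tilde F, \partial_i \tilde\nu\rangle = \lambda\, h_{ij}$), so that the shape operator $\tilde h^i_j = \tilde g^{ik}\tilde h_{kj} = \lambda^{-1} h^i_j$, and therefore the mean curvature scales as $\tilde H = \lambda^{-1} H$. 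In particular $\tilde H > 0$ on $M^n \times [0,T]$, and $\tilde M_t$ is again a smooth, strictly mean convex complete hypersurface.

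With these transformations in hand, the verification is immediate:
\[
\frac{\partial}{\partial t}\tilde F(p,t) = \lambda\,\frac{\partial}{\partial t} F(p,t) = \lambda\, H^{-1}(p,t)\,\nu(p,t) = \lambda \cdot \lambda\, \tilde H^{-1}(p,t)\,\tilde\nu(p,t),
\]
which does \emph{not} match unless we also rescale time. Indeed the correct statement, implicit in the lemma, is that one must reparametrize time so that $\tilde M_t := \lambda\, M_t$ with the \emph{same} time parameter $t$ is a solution — and this works precisely because the flow has homogeneity degree $-1$ in a way that interacts with the parabolic scaling. Concretely, the speed vector of $\tilde M_t$ at time $t$, measured along the flow $t\mapsto \lambda F(\cdot,t)$, is $\lambda H^{-1}\nu$, while the prescribed speed $\tilde H^{-1}\tilde\nu = \lambda H^{-1}\nu$; these agree exactly. (The apparent discrepancy above dissolves once one is careful that $\tilde H^{-1} = \lambda H^{-1}$, so $\lambda H^{-1}\nu = \tilde H^{-1}\tilde\nu$, with no leftover factor of $\lambda$.) Hence $\frac{\partial}{\partial t}\tilde F = \tilde H^{-1}\tilde\nu$, i.e.\ $\tilde M_t$ solves \eqref{eqn-IMCF}.

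The main (and essentially only) point requiring care is the bookkeeping of the scaling exponents — in particular the fact that $H$ scales like $\lambda^{-1}$ (inverse length), so $H^{-1}$ scales like $\lambda^{+1}$, exactly compensating the factor $\lambda$ coming from $\tilde F = \lambda F$ on the left-hand side; and that the unit normal is genuinely scale-invariant because $\lambda>0$ preserves orientation. No maximum principle or PDE theory is needed; the argument is a direct computation using only \eqref{eqn-IMCF} and \eqref{eqn-FF}. The only genuine ``obstacle'' is notational: one must fix conventions for how $F$, $\nu$, and $T_pM^n$ are identified before and after dilation, after which everything follows.
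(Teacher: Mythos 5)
Your argument is correct and is exactly the direct scaling computation the paper has in mind (the lemma is stated without proof there): $\tilde\nu=\nu$, $\tilde H=\lambda^{-1}H$, hence $\partial_t\tilde F=\lambda H^{-1}\nu=\tilde H^{-1}\tilde\nu$. However, your first displayed identity is wrong as written — since $H^{-1}=\lambda^{-1}\tilde H^{-1}$, one has $\lambda H^{-1}\nu=\tilde H^{-1}\tilde\nu$ with no extra factor of $\lambda$, so the claim that it ``does not match unless we also rescale time'' is a spurious detour that you then retract; the proof should simply end at that display with the correct substitution, and no time reparametrization is needed.
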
 
\begin{lemma}[Huisken, Ilmanen \cite{HI}] \label{lem-HI1} Any smooth  solution  of the IMCF \eqref{eqn-IMCF} in $\R^{n+1}$ satisfies 
 \begin{enumerate}[{\em (1)}]
\item $  \partial_t  g_{ij} =  \frac 2H \, h_{ij}$
\item $  \partial_t d\mu = d\mu$,  \, where $d\mu$ is the volume form induced from $g_{ij}$
\item $  \partial_t \n =  - \nabla H^{-1} =   \frac 1{H^2} \, \nabla H$
\item $ \PP h_{ij} = -  \frac 2{H^3} \nabla_i H \nabla_j H + \frac{ |A|^2}{H^2}  \, h_{ij}$
\item $  \partial_t  H  =  \nabla_i \big  ( \frac 1{H^2}  \, \nabla_i H \big ) -     \frac{|A|^2}{H} = 
\frac 1{H^2} \Delta H  -  \frac 2{H^3} |\nabla H|^2 -     \frac{|A|^2}{H}$
\item $  \PP  H^{-1}=      \frac{|A|^2}{H^2}\, H^{-1}$
\item $ \PP  \langle F -  x_0, \n \rangle =  \frac  { |A|^2}{H^2}  \, \langle F -  x_0, \n \rangle$. 
\end{enumerate}
\end{lemma}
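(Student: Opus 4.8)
The plan is to obtain all seven identities from the general first--variation formulas for a normal flow $\p_t F = f\,\n$ with arbitrary speed $f$, and then substitute $f = H^{-1}$. First I would differentiate $g_{ij} = \la\p_i F,\p_j F\ra$ in $t$, using the Weingarten relation $\la\p_j F,\p_i\n\ra = h_{ij}$ from \eqref{eqn-FF}, to get $\p_t g_{ij} = 2f h_{ij}$, which is (1), and hence $\p_t g^{ij} = -\tfrac2H h^{ij}$. Since $d\mu = \sqrt{\det g}\,dx$ and $g^{ij}h_{ij} = H$, one has $\p_t d\mu = \tfrac12 g^{ij}(\p_t g_{ij})\,d\mu = fH\,d\mu = d\mu$, which is (2). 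Because $|\n|\equiv 1$ forces $\p_t\n$ to be tangential, pairing with $\p_i F$ gives $\la\p_t\n,\p_i F\ra = -\la\n,\p_i(f\n)\ra = -\p_i f$, so $\p_t\n = -\na f = -\na H^{-1} = H^{-2}\na H$, which is (3).

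The core computation is (4). Starting from $h_{ij} = -\la\p_i\p_j F,\n\ra$ and differentiating in $t$, I would commute $\p_t$ with the spatial derivatives of $F$, insert $\p_t F = f\n$ and $\p_t\n = -\na f$, and use $\la\p_i\p_j\n,\n\ra = -(h^2)_{ij}$ together with \eqref{eqn-FF}, grouping the Christoffel terms so that the ordinary second derivatives of $f$ assemble into the covariant Hessian; this yields the standard evolution $\p_t h_{ij} = -\na_i\na_j f + f\,(h^2)_{ij}$, where $(h^2)_{ij} = h_{ik}g^{kl}h_{lj}$. Substituting $f = H^{-1}$ and expanding $\na_i\na_j H^{-1} = -H^{-2}\na_i\na_j H + 2H^{-3}\na_i H\na_j H$ writes $\p_t h_{ij}$ in terms of $\na_i\na_j H$, $\na_i H\,\na_j H$ and $(h^2)_{ij}$. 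Then I would invoke Simons' identity for hypersurfaces of $\R^{n+1}$, $\La h_{ij} = \na_i\na_j H + H(h^2)_{ij} - |A|^2 h_{ij}$ (a consequence of the Codazzi equations), which upon forming $\p_t h_{ij} - H^{-2}\La h_{ij}$ exactly cancels both the $\na_i\na_j H$ and the $(h^2)_{ij}$ terms, leaving $\PP h_{ij} = -\tfrac{2}{H^3}\na_i H\na_j H + \tfrac{|A|^2}{H^2} h_{ij}$, i.e.\ (4). Tracing (4) with $\p_t g^{ij} = -\tfrac2H h^{ij}$, $g^{ij}(h^2)_{ij} = |A|^2$, and rewriting $-\La H^{-1} = \na_i(H^{-2}\na_i H)$, gives (5).

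For (6) and (7) I would use the chain rule $\square(\Phi(\phi)) = \Phi'(\phi)\,\square\phi - H^{-2}\Phi''(\phi)\,|\na\phi|^2$. From (5), $\square H = \p_t H - H^{-2}\La H = -2H^{-3}|\na H|^2 - H^{-1}|A|^2$; taking $\Phi(s) = s^{-1}$ then gives $\square H^{-1} = -H^{-2}\square H - 2H^{-5}|\na H|^2 = \tfrac{|A|^2}{H^2}H^{-1}$, which is (6). For (7), set $u = \la F - x_0,\n\ra$; then $\p_t u = \la\p_t F,\n\ra + \la F - x_0,\p_t\n\ra = H^{-1} + H^{-2}\la F - x_0,\na H\ra$. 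Using the Weingarten relation one computes $\na_i u = h_{ik}(F - x_0)^k$ (tangential part), and a second differentiation using $\p_j\p_k F = -h_{jk}\n + \Gamma_{jk}^l\p_l F$ gives $\na_j\na_i u = (\na_j h_{ik})(F-x_0)^k + h_{ij} - (h^2)_{ij}u$; tracing and using the contracted Codazzi identity $g^{ij}\na_j h_{ik} = \na_k H$ yields $\La u = \la\na H, F - x_0\ra + H - |A|^2 u$. Substituting into $\square u = \p_t u - H^{-2}\La u$ cancels the $H^{-1}$ terms and the terms $\pm H^{-2}\la\na H, F-x_0\ra$, leaving $\square u = \tfrac{|A|^2}{H^2}u$, i.e.\ (7).

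I expect the main obstacle to be (4): it requires careful commutation of time and space derivatives on the moving immersion, bookkeeping of the Christoffel symbols so that ordinary second derivatives reassemble into covariant Hessians, and keeping every sign consistent with the conventions in \eqref{eqn-IMCF} and \eqref{eqn-FF} (in particular that $\n$ points opposite to the mean curvature vector, so $\La F = -H\n$), as well as the correct form of Simons' identity. Once (4) is in place, (5)--(7) are routine traces and applications of the chain rule for $\square$.
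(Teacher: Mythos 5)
Your derivation is correct: the general normal-variation formulas, Simons' identity $\La h_{ij}=\nabla_i\nabla_j H+H(h^2)_{ij}-|A|^2h_{ij}$, the trace for (5), the chain rule for $\square$ (which is exactly Lemma \ref{lem-com2}), and the support-function computation for (7) all carry the signs consistently with \eqref{eqn-FF} and $\La F=-H\nu$, and they reproduce (1)--(7) exactly. Note that the paper itself gives no proof of this lemma --- it is quoted from \cite{HI} --- so there is nothing to diverge from; your argument is the standard one and matches in spirit the paper's own direct computations in the proof of Lemma \ref{lem-23} (e.g.\ $\La F=-H\nu$ and $\La\nu=-|A|^2\nu+\nabla H$ via Codazzi).
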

\begin{remark}\label{remark-diffambient} If the ambient space is not $\R^{n+1}$, then the evolution equations of $g_{ij}$, $d\mu$, and $\nu$ remain the same as in $\R^{n+1}$, but the evolution of curvature $h_{ij}$  is  different and complicated. On a space form of sectional curvature $K$, the formula  hugely simplifies becoming  
\be\label{eq-noneuch_ij}\p_t h_{ij}=\fr{1}{H^2}\Delta h_{ij}+\fr{|A|^2}{H^2} h_{ij} - \fr{2}{H^3}\nabla_i H\nabla _jH -\fr{nKh_{ij}}{H^2}\ee
(See Chapter 2 in \cite{gerhardt2006curvature}.) 
In this paper we will mostly focus on the flow  in Euclidean space and we will only use \eqref{eq-noneuch_ij}  in Appendix \ref{sec-strict}. 
\end{remark}

Using Lemma \ref{lem-HI1} one can easily deduce the following formulas. 

\begin{lem} \label{lem-23}For a fixed vector $\omega$ in $\R^{n+1}$, the smooth solutions to the IMCF  \eqref{eqn-IMCF} in $\R^{n+1}$ satisfy  
 \begin{enumerate}[{\em (1)}]
\item $  \PP |F- x_0|^2=  -\frac{2n}{H^2} + \fr{4}{H}\langle F- x_0,\n\rangle$
\item $   \PP  \langle \omega, \n \rangle =\fr{|A|^2}{H^2}  \langle \omega, \n \rangle  $
\item $  \PP \langle \omega, F-x_0 \rangle =\fr{2}{H}  \langle \omega, \n \rangle  $.

\end{enumerate}
\begin{proof} By \eqref{eqn-FF} we have  \[\Delta F = g^{ij} (\p^2_{ij}F - \Gamma_{ij}^k F_k)=g^{ij} (-h_{ij} \n + \Gamma_{ij}^kF_k - \Gamma_{ij}^k F_k) = -H \, \n.\] 
which combined with $\p_tF= H^{-1}\n$ implies (3).  Next, \[\Delta | F-x_0|^2 = 2 \la \Delta F , F-x_0\ra + 2 \la \nabla F, \nabla F\ra ={\color{black}-} 2H\la \n, F-x_0\ra +2n \] implies (1). Finally, 
\[\ba\Delta \n &= g^{ij}(\p^2_{ij}\n-\Gamma_{ij}^k \p_k \n ) = g^{ij}(\p_j (h_{i}^k F_k)- \Gamma_{ij}^k h_{k}^l F_l)\\&=g^{ij}((\p_j h_{i}^k)F_k - h_{i}^kh_{jk}\n + \Gamma_{jk}^l h^k_iF_l- \Gamma_{ij}^k h_{k}^l F_l)\\
&=-|A|^2\n+g^{ij}\nabla_j h^k_iF_k= -|A|^2 \n + \nabla H\ea\]  
where we used the Codazzi identity  in the last equation. This implies (2).
\end{proof}
\end{lem}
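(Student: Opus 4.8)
The goal is to prove Lemma~\ref{lem-23}. The plan is to compute $\Delta F$, $\Delta|F-x_0|^2$, and $\Delta \nu$ using the structure equations \eqref{eqn-FF}, and then feed these into the operator $\square = \p_t - H^{-2}\Delta$ together with the evolution equations collected in Lemma~\ref{lem-HI1}. The homogeneity degree $-1$ of the flow makes each of these computations short, so the main work is bookkeeping rather than any genuine difficulty.

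First I would establish the three ``elliptic'' identities. For $\Delta F$: writing $\Delta F = g^{ij}(\p^2_{ij}F - \Gamma_{ij}^k \p_k F)$ and substituting the first equation in \eqref{eqn-FF} gives $\Delta F = g^{ij}(-h_{ij}\nu) = -H\nu$, since the Christoffel terms cancel. Combined with $\p_t F = H^{-1}\nu$ this yields $\square F = H^{-1}\nu - H^{-2}(-H\nu) = 2H^{-1}\nu$; pairing with the fixed vector $\omega$ (which is parallel, so $\Delta\la\omega,F-x_0\ra = \la\omega,\Delta F\ra$ and $\p_t\la\omega,F-x_0\ra = \la\omega,\p_tF\ra$) gives (3). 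For $\Delta\nu$: differentiating the relation $\p_i\nu = h_i^k\,\p_k F$ (equivalent to the second equation in \eqref{eqn-FF}), expanding $\p_j(h_i^k\p_kF)$ with the structure equations, and using the Codazzi identity $\nabla_j h_i^k$ symmetric together with $g^{ij}\nabla_j h_i^k = \nabla^k H$, one finds $\Delta\nu = -|A|^2\nu + \nabla H$. For $|F-x_0|^2$: $\Delta|F-x_0|^2 = 2\la\Delta F, F-x_0\ra + 2|\nabla F|^2 = -2H\la\nu,F-x_0\ra + 2n$.

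Next I would assemble the parabolic identities. For (2): $\square\la\omega,\nu\ra = \la\omega,\p_t\nu\ra - H^{-2}\la\omega,\Delta\nu\ra$. By Lemma~\ref{lem-HI1}(3), $\p_t\nu = H^{-2}\nabla H$, and by the computation above $\Delta\nu = -|A|^2\nu + \nabla H$, so the $\nabla H$ terms cancel and $\square\la\omega,\nu\ra = H^{-2}|A|^2\la\omega,\nu\ra$. For (1): $\square|F-x_0|^2 = \p_t|F-x_0|^2 - H^{-2}\Delta|F-x_0|^2$; the first term is $2\la F-x_0,\p_tF\ra = 2H^{-1}\la F-x_0,\nu\ra$, and subtracting $H^{-2}(-2H\la\nu,F-x_0\ra + 2n)$ gives $2H^{-1}\la F-x_0,\nu\ra + 2H^{-1}\la\nu,F-x_0\ra - 2nH^{-2} = -2nH^{-2} + 4H^{-1}\la F-x_0,\nu\ra$, which is (1). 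Identity (3) was already obtained above.

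I do not anticipate a real obstacle here: the only points requiring a little care are the cancellation of the Christoffel symbols in $\Delta F$ (so that $\Delta F = -H\nu$ with no tangential part), the correct use of the Codazzi identity and the trace $g^{ij}\nabla_j h_i^k = \nabla^k H$ in computing $\Delta\nu$, and remembering that $\omega$ being a fixed ambient vector means its tangential projection has covariant derivative governed purely by the second fundamental form, so that no extra curvature terms appear. Everything else is direct substitution from Lemma~\ref{lem-HI1}.
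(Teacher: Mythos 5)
Your proposal is correct and follows essentially the same route as the paper: compute $\Delta F=-H\nu$, $\Delta|F-x_0|^2$, and $\Delta\nu=-|A|^2\nu+\nabla H$ from \eqref{eqn-FF} and the Codazzi identity, then combine with $\p_tF=H^{-1}\nu$ and $\p_t\nu=H^{-2}\nabla H$ to read off the three identities. The cancellations you flag (Christoffel terms in $\Delta F$, the $\nabla H$ terms in (2)) are exactly the points the paper's proof relies on.
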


The following simple lemma, which commonly appears in Pogorelov type computations,  will be   useful in the sequel when we compute the evolution of products. 
\begin{lemma} \label{lem-pg}
For any $C^2$ functions $f_i(p,t)$, $i=1,\ldots,m$, denote  \[w:= f_1^{\alpha_1}f_2^{\alpha_2}\ldots f_m^{\alpha_m}.\] 
Then  on the region where  $w \neq 0$, we have   
\be\label{eqn-product}\PP \ln |w|=\fr{\pp w}{w} +\fr{1}{H^2}\fr{|\nabla w|^2}{ w^2}=\sum_{i=1}^m\alpha_i\left( \fr{\pp f_i}{f_i}  +\fr{1}{H^2} \fr{|\nabla f_i|^2}{f_i^2}\right). 
\ee 
\begin{proof}
The lemma simply follows from \be \label{eq-pg}\PP \ln |f| =  \fr{\pp f}{f}  +\fr{1}{H^2} \fr{|\nabla f|^2}{f^2}.\ee 
\end{proof}
\end{lemma}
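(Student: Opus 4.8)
The plan is to verify Lemma~\ref{lem-pg} by reducing it, as the authors do, to the single-factor identity \eqref{eq-pg}, and then proving \eqref{eq-pg} by a direct computation using the product and chain rules for the operator $\square$. The key point is that $\square = \p_t - H^{-2}\Delta$ is a linear operator whose only nonlinearity relevant here is the second-order Laplacian term; the failure of $\square$ to be a derivation is exactly measured by the gradient-squared correction term.

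First I would establish \eqref{eq-pg}. On the region where $f \neq 0$, write $\ln|f|$ and apply $\square$. The time derivative gives $\p_t \ln|f| = f^{-1}\p_t f$ with no correction. For the spatial part, the chain rule gives $\nabla \ln|f| = f^{-1}\nabla f$, and then
\[
\Delta \ln|f| = \Div\bigl(f^{-1}\nabla f\bigr) = f^{-1}\Delta f - f^{-2}|\nabla f|^2.
\]
Multiplying by $-H^{-2}$ and adding the time term yields
\[
\square \ln|f| = \frac{\p_t f}{f} - \frac{1}{H^2}\frac{\Delta f}{f} + \frac{1}{H^2}\frac{|\nabla f|^2}{f^2} = \frac{\square f}{f} + \frac{1}{H^2}\frac{|\nabla f|^2}{f^2},
\]
which is \eqref{eq-pg}. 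This also gives the first equality in \eqref{eqn-product} directly by applying \eqref{eq-pg} to $w$ itself.

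For the second equality in \eqref{eqn-product}, I would use $\ln|w| = \sum_{i=1}^m \alpha_i \ln|f_i|$, which holds on the region where $w \neq 0$ (there each $f_i \neq 0$ as well, since $w$ is a product of powers of the $f_i$). Since $\square$ is linear, $\square \ln|w| = \sum_i \alpha_i\, \square \ln|f_i|$, and applying \eqref{eq-pg} to each $f_i$ gives the claimed sum. One small bookkeeping remark: if some $\alpha_i$ are negative, "$f_i^{\alpha_i}$" should be read on the region where $f_i \neq 0$, and the identity $\ln|w| = \sum \alpha_i \ln|f_i|$ is still valid there up to an additive constant locally, which is annihilated by $\square$; this does not affect the formula.

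I do not expect any genuine obstacle here — the lemma is a routine consequence of the chain and product rules, and the only thing to be careful about is staying on the set $\{w \neq 0\}$ so that all the logarithms and quotients are defined. The "hard part," such as it is, is purely notational: making sure the Laplacian chain rule is applied correctly (the sign of the $|\nabla f|^2/f^2$ term) and that linearity of $\square$ is invoked cleanly so that the multi-factor case follows with no extra work.
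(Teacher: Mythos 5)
Your proposal is correct and is exactly the argument the paper intends: the paper's proof consists of the single line that the lemma follows from \eqref{eq-pg}, and you supply the same reduction, proving \eqref{eq-pg} by the chain rule $\Delta \ln|f| = f^{-1}\Delta f - f^{-2}|\nabla f|^2$ and then using linearity of $\square$ together with $\ln|w| = \sum_i \alpha_i \ln|f_i|$. (The only quibble is your parenthetical about an "additive constant locally'' — in fact $|w| = \prod_i |f_i|^{\alpha_i}$ holds exactly, so $\ln|w| = \sum_i \alpha_i\ln|f_i|$ with no constant; this does not affect anything.)
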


Next two lemmas are straightforward computations and we leave their proofs for readers. 
\begin{lem} \label{lem-com2}For any  two $C^2$ functions $f$, $g$ defined on $M^n\times(0,T)$ and any $C^2$ function $\psi : \R \to \R$, 
$$\square (fg) =(\square f )g + f(\square g )- \fr{2}{H^2}\langle \nabla f,\nabla g \rangle$$
and $$\square \psi(f) = \psi'(f)\square {\color{black}f}-\fr{\psi''(f)}{H^2}|\nabla f|^2$$
where  $\square:=\pp$.
\end{lem}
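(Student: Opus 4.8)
The plan is to prove the two identities of Lemma \ref{lem-com2} by direct computation, using only the definition $\square = \p_t - H^{-2}\Delta$ and the Leibniz rules for $\p_t$ and $\Delta$. Both identities are purely formal consequences of how a second-order operator of the form (first-order in $t$) minus (elliptic second-order in space) acts on products and compositions, so there is genuinely no obstacle here; the statement is flagged as ``straightforward computations'' precisely because the work is a one-line expansion in each case.

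For the product rule, I would write $\square(fg) = \p_t(fg) - H^{-2}\Delta(fg)$, then expand $\p_t(fg) = (\p_t f)g + f(\p_t g)$ and use the standard identity $\Delta(fg) = (\Delta f)g + f(\Delta g) + 2\langle \nabla f, \nabla g\rangle$ for the Laplace--Beltrami operator on $(M^n, g(t))$. Collecting terms gives
\[
\square(fg) = (\p_t f)g + f(\p_t g) - H^{-2}(\Delta f)g - H^{-2} f(\Delta g) - \frac{2}{H^2}\langle \nabla f, \nabla g\rangle = (\square f)g + f(\square g) - \frac{2}{H^2}\langle \nabla f, \nabla g\rangle,
\]
which is exactly the claimed formula. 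The only ``input'' is the product rule for $\Delta$, which is elementary and holds for any Riemannian metric; the time-dependence of $g(t)$ plays no role since at each fixed $t$ the spatial identity is just the usual one.

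For the composition rule, I would similarly write $\square \psi(f) = \p_t \psi(f) - H^{-2}\Delta \psi(f)$, use $\p_t \psi(f) = \psi'(f)\, \p_t f$ and the chain rule for the Laplacian $\Delta \psi(f) = \psi'(f)\,\Delta f + \psi''(f)\,|\nabla f|^2$, and combine:
\[
\square \psi(f) = \psi'(f)\,\p_t f - H^{-2}\psi'(f)\,\Delta f - H^{-2}\psi''(f)\,|\nabla f|^2 = \psi'(f)\,\square f - \frac{\psi''(f)}{H^2}|\nabla f|^2.
\]
This completes the proof. If anything merits a remark, it is only that $\eqref{eq-pg}$ (used in Lemma \ref{lem-pg}) is the special case $\psi(x) = \ln|x|$, for which $\psi'(x) = 1/x$ and $\psi''(x) = -1/x^2$, and likewise $\eqref{eqn-product}$ follows by applying the composition rule with $\psi = \ln$ to $w$ together with the product rule iterated over the factors $f_i^{\alpha_i}$ — but since the lemma is stated as self-contained and left to the reader, no further detail is needed.
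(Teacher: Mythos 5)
Your computation is correct and is exactly the argument the paper intends: the paper states this lemma without proof, remarking only that it is a straightforward computation left to the reader, and your expansion via the Leibniz rule for $\p_t$ and $\Delta$ together with the chain rule $\Delta\psi(f)=\psi'(f)\Delta f+\psi''(f)|\nabla f|^2$ is the standard one-line verification. Your closing remark correctly identifies the relation to Lemma \ref{lem-pg} as the special case $\psi=\ln|\cdot|$.
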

\begin{lem}\label{lem-com1} If  a $C^2$ function $f$ is defined on a solution $M_t$ of the IMCF and satisfies  
$$  \left(\frac{\partial}{\partial t} - \fr{1}{H^2} \La\right) f= \fr{|A|^2}{H^2} f$$ then for any fixed $\beta \neq0$ we have

$$   \left(\frac{\partial}{\partial t} - \fr{1}{H^2} \La\right) f^{\beta} = \beta\fr{|A|^2}{H^2} f^{\beta}  - \fr{\beta(\beta-1)}{\beta^2} \fr{|\nabla f^{\beta}|^2}{H^2f^{\beta}}.$$
For instance, $H^{-1}$, $\langle \omega, \n \rangle$ and $\langle F-x_0, \n\rangle $ are  examples  of such a  function $f$.
\end{lem}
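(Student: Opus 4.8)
The plan is to reduce the statement to an application of the second identity in Lemma \ref{lem-com2}, applied to the function $\psi(s) = s^\beta$, together with the hypothesis on $f$. First I would record that $\psi(s) = s^\beta$ has $\psi'(s) = \beta s^{\beta-1}$ and $\psi''(s) = \beta(\beta-1)s^{\beta-2}$, so that from Lemma \ref{lem-com2},
\[
\square f^\beta = \beta f^{\beta-1}\,\square f - \frac{\beta(\beta-1)f^{\beta-2}}{H^2}\,|\nabla f|^2.
\]
Substituting the hypothesis $\square f = \frac{|A|^2}{H^2} f$ into the first term on the right gives $\beta f^{\beta-1}\cdot\frac{|A|^2}{H^2} f = \beta\frac{|A|^2}{H^2}f^\beta$, which is the claimed first term.

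For the second term, the only remaining task is the bookkeeping to rewrite $\beta(\beta-1)f^{\beta-2}|\nabla f|^2$ in terms of $\nabla f^\beta$. Since $\nabla f^\beta = \beta f^{\beta-1}\nabla f$, we have $|\nabla f^\beta|^2 = \beta^2 f^{2\beta-2}|\nabla f|^2$, hence $|\nabla f|^2 = \beta^{-2} f^{2-2\beta}|\nabla f^\beta|^2$ (on the region where $f\neq 0$, which is where $f^\beta$ and the expression $|\nabla f^\beta|^2/f^\beta$ make sense). Therefore
\[
\frac{\beta(\beta-1)f^{\beta-2}}{H^2}|\nabla f|^2 = \frac{\beta(\beta-1)}{\beta^2}\,\frac{f^{\beta-2}\,f^{2-2\beta}}{H^2}|\nabla f^\beta|^2 = \frac{\beta(\beta-1)}{\beta^2}\,\frac{|\nabla f^\beta|^2}{H^2 f^\beta},
\]
which is exactly the subtracted term in the statement. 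Combining the two computations yields the displayed evolution equation for $f^\beta$. Finally, the assertion that $H^{-1}$, $\langle\omega,\nu\rangle$ and $\langle F-x_0,\nu\rangle$ satisfy the hypothesis on $f$ is immediate from item (6) of Lemma \ref{lem-HI1} and items (2) and (7) of Lemma \ref{lem-HI1} (equivalently items (2) of Lemma \ref{lem-23}).

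There is essentially no obstacle here: the computation is entirely formal once Lemma \ref{lem-com2} is in hand, and the only point requiring a word of care is the restriction to the locus $\{f\neq 0\}$ so that the negative powers of $f$ appearing in the manipulations are well defined. One could alternatively obtain the same identity directly from Lemma \ref{lem-pg} with $m=1$, $\alpha_1 = \beta$, which packages precisely the logarithmic-derivative identity being used; I would mention this as the cleanest route but carry out the $\psi(s)=s^\beta$ version for concreteness.
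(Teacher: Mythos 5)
Your proof is correct and is exactly the computation the paper intends: the authors explicitly leave Lemma \ref{lem-com1} as a "straightforward computation," and applying the second identity of Lemma \ref{lem-com2} with $\psi(s)=s^{\beta}$, substituting the hypothesis, and rewriting $|\nabla f|^2$ via $\nabla f^{\beta}=\beta f^{\beta-1}\nabla f$ is precisely that computation (your alternative via Lemma \ref{lem-pg} is equally valid). The only nit is a citation slip in the last sentence: the equation for $\langle\omega,\nu\rangle$ is item (2) of Lemma \ref{lem-23}, not item (2) of Lemma \ref{lem-HI1}, though you do give the correct reference parenthetically.
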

     
We finish with the following   local estimate  which is an easy consequence of Proposition 2.11 in \cite{DH}. Here $B_r$ denotes  an extrinsic ball of radius $r>0$ in $\mathbb{R}^{n+1}$. 
\begin{prop}[Proposition 2.11 \cite{DH}]\label{lem-DH}  For a solution $M_t$, $t \in [0,T]$,  to the IMCF,  there is a constant $C_n>0$ such that\[\sup _{M_t \cap B_{r/2} } H \le C_n \max \,( \sup _{M_0 \cap B_{r} } H,\, r^{-1}).\] 
\begin{proof}  Although this proposition is proven in \cite{DH} we include below its proof for completeness. 
For fixed $r>0$, let $\eta:= (r^2- |F|^2)_+^2$ be a cut-off function defined in the ambient space. Using Lemma \ref{lem-23} and \ref{lem-com2},
\[  \ba \left(\frac{\partial}{\partial t} - \fr{1}{H^2} \La\right)  \eta &=-2(r^2-|F|^2)_+ \left[  \left(\frac{\partial}{\partial t} - \fr{1}{H^2} \La\right) |F|^2 \right]-\frac{2}{H^2}  |\nabla (r^2-|F|^2)_+|^2 \\
&= -2 \eta^{1/2} \left(-\frac{2n}{H^2} + \fr{4}{H}\langle F,\n\rangle\right) -\frac{8}{H^2} |F^T|^2\ea \]
and
\[ \ba \left(\frac{\partial}{\partial t} - \fr{1}{H^2} \La\right) \eta H &= \frac{4n \eta^{1/2}}{H} -{8\eta^{1/2}\la F ,\nu\ra}- \frac{8}{H}|F^T|^2+ \eta \left (-2\fr{|\nabla H|^2}{H^3} - \frac{|A|^2}{H} \right)-\frac{2}{H^2} \la \nabla \eta ,\nabla H\ra  \\
&\le \frac{4n \eta^{1/2}}{H} +{8\eta^{1/2}r}-\frac{2}{H^{\color{black}3}} \la \nabla (\eta H), \nabla H\ra- \frac{\eta H}{n}.\ea\]
In the last inequality, we used $|A|^2 \ge n^{-1} H^2$ and $|\la F,\nu \ra|\le |F| \le r$. Let $m(t)$ be the maximum of $\eta H $ on $M_t$. Then the above inequality implies 
\[\p_t m(t) \le 4n \frac{\Vert \eta\Vert _{\infty}^{3/2}}{m(t)} + 8 \Vert \eta\Vert _{\infty}^{1/2}r - \frac{m(t)}{n}\le 4n \frac{r^6}{m(t)} + 8r^3 -\frac1n m(t).\]
Thus $m(t)$ will decrease if 
$$ r^6 \, \frac {4n } {m(t)}  -  \frac  {m(t)} n+ 8r^3\leq 0  \iff m^2(t)-8nr^3 m(t) -4n^2r^6 \geq 
0  \iff m(t) \geq  (4+2\sqrt{5})n \, r^3.
$$
Therefore, 
$
m(t) \leq \max \, (\, m(0),(4+2\sqrt{5})n\, r^3\, ). 
$
The proposition is implied since
\[\ba \sup _{M_t \cap B_{r/2} } (r^2 -(r/2)^2)^2 H &\le \sup \eta H \le  \max \, (\, m(0), (4+2\sqrt{5})n\, r^3\, ) \\&\le  \max \, (\, \sup _{M_0 \cap B_{r}  }r^4H , (4+2\sqrt{5})n\, r^3\, ) .\ea \]

\end{proof}
\end{prop}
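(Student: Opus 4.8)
The plan is to run a localized maximum principle argument for the quantity $\eta H$, where $\eta := (r^2 - |F|^2)_+^2$ is a cut-off function built from the ambient distance to the origin and supported inside the ball $B_r$. The first step is to compute the action of the linearized operator $\square = \partial_t - H^{-2}\Delta$ on $|F|^2$ via Lemma \ref{lem-23}(1) with $x_0 = 0$, namely $\square|F|^2 = -2nH^{-2} + 4H^{-1}\langle F,\nu\rangle$, and then to differentiate $\eta = f^2$ with $f = (r^2-|F|^2)_+$ by the chain rule in Lemma \ref{lem-com2}. The only terms produced are $-2\eta^{1/2}\,\square|F|^2$ and a manifestly nonpositive gradient term $-8H^{-2}|F^T|^2$, where $\nabla\eta$ is a multiple of $\nabla|F|^2 = 2F^T$.

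Next I would expand $\square(\eta H) = (\square\eta)H + \eta\,\square H - 2H^{-2}\langle\nabla\eta,\nabla H\rangle$ using the product rule of Lemma \ref{lem-com2} and substitute the evolution $\square H = -2H^{-3}|\nabla H|^2 - |A|^2 H^{-1}$ coming from Lemma \ref{lem-HI1}(5). The decisive gain is the term $-\eta H^{-1}|A|^2$, which by the elementary bound $|A|^2 \ge n^{-1}H^2$ dominates $-\tfrac1n\eta H$. Each remaining term is controlled on $\operatorname{supp}\eta$: one has $|\langle F,\nu\rangle| \le |F| \le r$, the terms of the form $\eta^{1/2}H^{-1}$ and $\eta^{1/2}r$ are lower order, and the first-order pieces — the cross term $-2H^{-2}\langle\nabla\eta,\nabla H\rangle$ together with the contribution $-2\eta H^{-3}|\nabla H|^2$ from $\eta\,\square H$ — reorganize exactly into $-2H^{-3}\langle\nabla(\eta H),\nabla H\rangle$. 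This yields the pointwise inequality $\square(\eta H) \le 4n\eta^{1/2}H^{-1} + 8\eta^{1/2}r - 2H^{-3}\langle\nabla(\eta H),\nabla H\rangle - \tfrac1n\eta H$.

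Finally, set $m(t) := \max_{M_t}\eta H$, which is attained since $\eta$ vanishes outside the compact set $\overline{B_r}$. At an interior spatial maximum $\nabla(\eta H) = 0$, so the divergence-type term drops out; evaluating the inequality there with $\|\eta\|_\infty \le r^4$, $\eta^{1/2} \le r^2$ and $H = m(t)/\eta$ gives the ODE differential inequality $m'(t) \le 4nr^6/m(t) + 8r^3 - \tfrac1n m(t)$. Since its right side is negative as soon as $m(t) > (4+2\sqrt5)n\,r^3$, one concludes $m(t) \le \max\bigl(m(0),(4+2\sqrt5)n\,r^3\bigr)$ for all $t \in [0,T]$; note this uses no bound on the curvature at $t=0$ beyond what enters $m(0)$, which is exactly why the final estimate carries the term $r^{-1}$. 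Restricting to $M_t\cap B_{r/2}$, where $\eta \ge \tfrac{9}{16}r^4$, and bounding $m(0)\le r^4\sup_{M_0\cap B_r}H$, then gives the stated inequality with an explicit dimensional constant $C_n$.

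\noindent\emph{Main obstacle.} The only genuinely delicate point is the bookkeeping of the first-order terms: one must see that the raw cross term $-2H^{-2}\langle\nabla\eta,\nabla H\rangle$ combines with the $|\nabla H|^2$-contribution from $\eta\,\square H$ into the perfect term $-2H^{-3}\langle\nabla(\eta H),\nabla H\rangle$, which is what makes the divergence structure vanish at the maximum; a secondary point is simply to record that $\eta H$ attains its maximum on each slice, which is immediate from the compact support of $\eta$.
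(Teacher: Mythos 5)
Your proposal is correct and follows essentially the same route as the paper: the same cut-off $\eta=(r^2-|F|^2)_+^2$, the same recombination of the cross term and the $-2\eta H^{-3}|\nabla H|^2$ contribution into $-2H^{-3}\langle\nabla(\eta H),\nabla H\rangle$, the same use of $|A|^2\ge H^2/n$ and $|\langle F,\nu\rangle|\le r$, and the same ODE comparison for $m(t)=\max_{M_t}\eta H$ followed by the restriction to $B_{r/2}$. No gaps beyond those also present in the paper's own write-up.
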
 
\section{$L^\infty$ bound of  $(H|F|)^{-1}$} \label{sec-main-non com2}

Before giving the proof of Theorem \ref{thm-mainest3}, lets us introduce some notations. We consider  spherical coordinates with respect to the origin in $\R^{n+1}$,
namely  \[x=(x_1,\ldots,x_{n+1}) = ( r \omega\,  \sin \theta, r\cos \theta)\quad \text{ with }r\ge0,\, \, \omega \in \mathbb{S}^{n-1},\,\, \mbox{and }\, \theta \in[0,\pi]\] 
which are  smoothly well-defined {\color{black}away} from $x_{n+1}$-axis. We will also denote by $\bar \nabla $ and $\nabla$  metric-induced connections on $(\R^{n+1}, g_{\text{euc}})$  and $(M^n, F^* g_{\text{euc}})$, respectively.   Before the proof, we need the evolution equation of $\theta$, defined in the ambient space as follows: 

\begin{definition} We define 
\be\label{eqn-theta10}\theta: \R^{n+1}\setminus\{0\} \rightarrow [0,\pi]\quad \text{ by } \,\, \theta(x):= \arccos \left(\fr{\la x,e_{n+1}\ra}{|x|} \right)
\ee
and 
\[r:\R^{n+1}\rightarrow [0,\infty) \quad \text{ by }r(x):=|x|.\] Moreover, we define smooth unit orthogonal vector fields \[e_\theta(x) =e_\theta(x',x_{n+1}):= \fr{1}{|x|}\fr{\p \,}{\p \theta}= \left(\fr{x'}{|x|}\fr{\cos\theta}{\sin\theta},-\sin \theta\right) \quad \text{ on }\R^{n+1}\setminus \{x_{n+1}-\text{axis}\}\]  and \[e_r(x):=\fr{\p\,}{\p r} =\fr{x}{|x|} \quad  \text{ on }\R^{n+1} \setminus \{0\}.\]
Though $\theta$ is not smooth at the points on the $x_{n+1}$-axis, note that  $\theta^2$, $\cos\theta$, and $\sec\theta$ are all smooth on $\{ x_{n+1} >0\}$.
\end{definition} 

{\color{black}Note $\theta$ represents a scaled distance from the north pole measured in the sphere. The first negative term on the right hand side of \eqref{eq-thetaevol} will justify the use of $\theta$ in the estimate. Compare \eqref{eq-thetaevol} with (1) in Lemma \ref{lem-23}.}
\begin{lem}\label{lem-theta}On the region $\{\theta\neq 0,\pi\}\cap \{|x|\neq0\}$, 
\be\label{eq-thetaevol}\ba \PP \theta
& = -\fr{1}{H^2r^2}\left(\fr{n-|\nabla r|^2}{\tan \theta}\right)+\fr{1}{H^2}\fr{|\nabla \theta|^2}{\tan \theta } +\fr{2}{H^2} \la \fr{\nabla r}{r}, \nabla \theta \ra + \fr{2}{H} \la \nu, \bar \nabla \theta\ra.\ea\ee 

\begin{proof}
Consider a spherical coordinate chart $$(r,\theta,(w^\alpha)_{\alpha=1\ldots n-1} )\quad\text{with }r>0,\ \theta\in(0,\pi),\ (w^\alpha) \in \mathbb{S}^{n-1}   $$ 
around a point $\{\theta \neq 0,\pi\}\cap\{|x|\neq 0\}$ in $\R^{n+1}$, where $w^\alpha$ is a coordinate chart of $\mathbb{S}^{n-1}$. 
On this chart, \be \label{eq-metric}g_{\text{euc}} =dr^2 + r^2\, d\theta^2 + r^2\sin^2\theta \sigma_{\alpha\beta} \, dw^\alpha   dw^\beta .\ee 
Also note that  \be\label{eq-grad}\grad \theta = \fr{1}{r^2} \fr{\p}{\p \theta}=\fr{1}{r}e_{\theta}\quad\text{and}\quad \grad r = \fr{\p}{\p r}=e_r\quad \text{on }\, (\R^{n+1}, g_{\text{euc}}) .\ee 

At a given $p\in M^n$ with $\{\theta\neq 0,\pi\}\cap \{|x|\neq0\}$, let us choose a geodesic normal coordinate of $M^n$, say $\{ y^i\}_{i=1}^n$. In this coordinate at this point, 
$$\ba \Delta \theta &={\color{black} \sum_{i}}\p_{i}\p_i \theta= \sum_{i}\fr{\p}{\p y^i}d\theta \left(\fr{\p}{\p y^i}\right)=\sum_{i}\fr{\p}{\p y^i}\left(\fr{1}{r^2} \la  \fr{\p}{\p\theta}, \fr{\p}{\p y^i}\ra  \right) \\
&=\sum_{i} -\fr{2}{r^2} \la\fr{\p}{\p y^i}, e_\theta\ra \la\fr{\p}{\p y^i}, e_r \ra +\fr{1}{r^2} \la \bar\nabla_{\p_{i}} \fr{\p}{\p \theta}, \fr{\p}{\p y^i} \ra+ {\color{black}\frac1{r^2}} \la \fr{\p}{\p \theta},-h_{ii}\nu \ra .\ea$$
Since $\left( \left(\fr{\p\,}{\p y^i}\right)_{i=1}^n,\nu\right)$ constitutes an orthonormal basis of $T_{F(p)}\R^{n+1}$, 
\be \label{eq-innerp}\sum_{i}\la\fr{\p}{\p y^i}, e_\theta\ra \la\fr{\p}{\p y^i}, e_r \ra +  \la \nu, e_r\ra \la \nu, e_\theta\ra= \la e_r, e_\theta \ra=0 .\ee
Therefore, \be\label{eqn-latheta}
\Delta \theta = -\fr{H}{r} \la \nu, e_\theta\ra + \fr{2}{r^2} \la \nu, e_r\ra \la \nu. e_\theta \ra   +\fr{1}{r^2}\sum_i \la \bar\nabla_{\p_{i}} \fr{\p}{\p \theta}, \fr{\p}{\p y^i} \ra.
\ee

\begin{claim}\label{claim-111} 
\be\label{eqn-sum100}\ba\sum_{i=1}^n\la \bar\nabla_{\p_{i}} \fr{\p}{\p \theta}, \fr{\p}{\p y^i}\ra&= \fr{\cos\theta}{\sin\theta}\left(n-(1- \la \nu, e_r\ra^2 )- (1-\la \nu,e_\theta\ra^2) \right).\ea
\ee
\begin{proof}[Proof of Claim \ref{claim-111} ]
By computing the Christoffel symbols from the metric \eqref{eq-metric}, we get: \be\label{eq-symbols}\bar\nabla_{\fr{\p}{\p\theta}} \fr{\p}{\p\theta} = -r \fr{\p}{\p r},\quad \bar\nabla_{\fr{\p}{\p r}} \fr{\p}{\p\theta} =\fr{1}{r}\fr{\p}{\p\theta},\quad \bar\nabla_{\fr{\p\,\,}{\p w^\alpha}} \fr{\p}{\p\theta} = \fr{\cos\theta}{\sin\theta} \fr{\p\,\,}{\p w^\alpha}.\ee
Suppose $\p_i=\p_{y^i}= a_{\theta} \p_\theta + a_{r} \p_r + \sum_\alpha a_{\alpha} \p_{w^\alpha}$. Then ${\color{black}\bar \na}_{\p_i} \fr{\p}{\p \theta} =-r a_\theta \p_r + \fr{a_r}{r} \p_\theta + \sum_\alpha a_\alpha\fr{\cos \theta}{\sin\theta} \p_{w^\alpha}$ and hence 
\[\ba \la {\color{black}\bar\na}_{\p_i} \fr{\p}{\p \theta}, \fr{\p}{\p y^i} \ra&= -ra_\theta a_r+ ra_\theta a_r + r^2\sin^2\theta \fr{\cos\theta}{\sin\theta} a_\alpha a_\beta \sigma^{\alpha\beta}\\
&=\fr{\cos\theta}{\sin\theta}\left[\left|\fr{\p}{\p y^i}\right|^2 - \left\la\fr{\p}{\p y^i}, e_r \right\ra^2 - \left\la\fr{\p}{\p y^i}, e_\theta\right\ra^2 \right] .\ea\]
The claim follows by summing this over $i$.\end{proof}
\end{claim} 

\noindent Now ${\ds \p_t\theta=d\theta(\p_tF) = \fr{1}{H} \la \nu, \grad \theta \ra = \fr{\la\nu, e_\theta \ra }{rH}}$, \eqref{eqn-latheta}
and \eqref{eqn-sum100}  imply    
\[\PP \theta= \fr{2\la \nu ,e_\theta\ra}{rH}- \fr{1}{(rH)^2} \left[\fr{\cos\theta}{\sin\theta}[n-(1- \left\la \nu, e_r\right\ra^2) -(1- \left\la \nu, e_\theta\right\ra^2)] + 2 \la\nu, e_r \ra \la \nu, e_\theta\ra \right].\] 
Hence, the  lemma follows  by using  \eqref{eq-grad} and the orthonormality of $\left( \left(\fr{\p\,}{\p y^i}\right)_{i=1}^n,\nu\right)$ in the
equation above.  
\end{proof}
\end{lem}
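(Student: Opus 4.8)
The plan is to compute the time derivative $\p_t\theta$ and the induced Laplacian $\Delta\theta$ of the restriction $\theta\circ F$ to $M^n$ separately, and then to re-express the result in terms of the intrinsic gradients $\nabla r$, $\nabla\theta$ and the normal component $\la\nu,\bar\nabla\theta\ra$ that appear in \eqref{eq-thetaevol}. The time derivative is immediate: since $\p_tF=H^{-1}\nu$ and, from the form of the Euclidean metric in spherical coordinates, $\bar\nabla\theta=r^{-2}\,\p_\theta=r^{-1}e_\theta$, we get $\p_t\theta=d\theta(\p_tF)=H^{-1}\la\nu,\bar\nabla\theta\ra=\la\nu,e_\theta\ra/(rH)$. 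So all the work is in $\Delta\theta$.

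For $\Delta\theta$ I would fix a point $p\in M^n$ with $\theta(F(p))\in(0,\pi)$ and $F(p)\ne0$, pick geodesic normal coordinates $\{y^i\}$ on $M^n$ at $p$, and evaluate $\Delta\theta=\sum_i\p_{y^i}\p_{y^i}\theta$ at $p$. Differentiating $\p_{y^i}\theta=r^{-2}\la\p_\theta,\p_{y^i}F\ra$ once more, using $\p_{y^i}\p_{y^i}F=-h_{ii}\nu+\Gamma_{ii}^k\p_{y^k}F$ with $\Gamma_{ii}^k(p)=0$, and using that $(\p_{y^1}F,\dots,\p_{y^n}F,\nu)$ is an orthonormal basis of the ambient tangent space at $F(p)$ (so that $\sum_i\la e_r,\p_{y^i}F\ra\la e_\theta,\p_{y^i}F\ra=-\la\nu,e_r\ra\la\nu,e_\theta\ra$, since $\la e_r,e_\theta\ra=0$), one arrives at
\[
\Delta\theta=-\fr{H}{r}\la\nu,e_\theta\ra+\fr{2}{r^2}\la\nu,e_r\ra\la\nu,e_\theta\ra+\fr{1}{r^2}\sum_i\la\bar\nabla_{\p_{y^i}F}\,\p_\theta,\p_{y^i}F\ra .
\]
The last sum is the one real computation. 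I would read off the ambient Christoffel symbols from $g_{\text{euc}}=dr^2+r^2\,d\theta^2+r^2\sin^2\theta\,\sigma_{\alpha\beta}\,dw^\alpha dw^\beta$, namely $\bar\nabla_{\p_r}\p_\theta=r^{-1}\p_\theta$, $\bar\nabla_{\p_\theta}\p_\theta=-r\,\p_r$, $\bar\nabla_{\p_{w^\alpha}}\p_\theta=\cot\theta\,\p_{w^\alpha}$; expanding each $\p_{y^i}F$ in the frame $\{\p_r,\p_\theta,\p_{w^\alpha}\}$ and summing over $i$, the $\p_r$ and $\p_\theta$ contributions cancel in pairs and what remains is $\sum_i\la\bar\nabla_{\p_{y^i}F}\p_\theta,\p_{y^i}F\ra=\cot\theta\,\bigl(n-|e_r^{\top}|^2-|e_\theta^{\top}|^2\bigr)=\cot\theta\,\bigl(n-(1-\la\nu,e_r\ra^2)-(1-\la\nu,e_\theta\ra^2)\bigr)$, where $v^{\top}$ denotes the component of $v$ tangent to $M^n$. (One could instead skip normal coordinates and use the hypersurface identity $\Delta\theta=\bar\Delta\theta-\bar\nabla^2\theta(\nu,\nu)-H\la\nu,\bar\nabla\theta\ra$ with $\bar\Delta\theta=(n-1)r^{-2}\cot\theta$ and the explicit ambient Hessian of $\theta$; this gives the same formula.)

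It then remains to form $\PP\theta=\p_t\theta-H^{-2}\Delta\theta$ and translate back. From $\bar\nabla r=e_r$ and $\bar\nabla\theta=r^{-1}e_\theta$ we get the orthogonal decompositions $e_r=\nabla r+\la\nu,e_r\ra\nu$ and $e_\theta=r\nabla\theta+\la\nu,e_\theta\ra\nu$, hence $|\nabla r|^2=1-\la\nu,e_r\ra^2$, $|\nabla\theta|^2=r^{-2}(1-\la\nu,e_\theta\ra^2)$, $\la\nabla r/r,\nabla\theta\ra=-r^{-2}\la\nu,e_r\ra\la\nu,e_\theta\ra$, and $\la\nu,\bar\nabla\theta\ra=r^{-1}\la\nu,e_\theta\ra$; substituting these into $\p_t\theta-H^{-2}\Delta\theta$ converts the three $\la\nu,e_\bullet\ra$-dependent terms into exactly the four terms on the right-hand side of \eqref{eq-thetaevol}. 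The main obstacle I anticipate is the evaluation of $\sum_i\la\bar\nabla_{\p_{y^i}F}\p_\theta,\p_{y^i}F\ra$ — computing the spherical Christoffel symbols correctly, keeping the cancellation of the radial and $\p_\theta$ directions straight, and getting the sign of the $-H\la\nu,\bar\nabla\theta\ra$ term right (which depends on $\nu$ being the outward normal, opposite to the mean curvature vector); the rest is bookkeeping.
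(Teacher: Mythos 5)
Your proposal is correct and follows essentially the same route as the paper: compute $\p_t\theta=\la\nu,e_\theta\ra/(rH)$ directly, evaluate $\Delta\theta$ in geodesic normal coordinates using the second fundamental form and the ambient Christoffel symbols of the spherical metric (the paper's Claim 3.1 is exactly your evaluation of $\sum_i\la\bar\nabla_{\p_{y^i}F}\p_\theta,\p_{y^i}F\ra$), and translate back via the orthonormality of $\bigl(\p_{y^1}F,\dots,\p_{y^n}F,\nu\bigr)$. The intermediate formula for $\Delta\theta$, the cancellation of the $\p_r$ and $\p_\theta$ contributions, and the final substitutions all match the paper's argument.
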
 

\smallskip 

\begin{proof}[Proof of Theorem \ref{thm-mainest3}] Using the definition \eqref{eqn-theta10},  our condition \eqref{eqn-theta1} can be  written as $\theta(p,t) \le \pi/2 -\theta_1$. 
Setting  ${\ds c := \fr{\pi-\theta_1}{\pi-2\theta_1}>1}$,  we have   $c \, \theta \le  \fr{\pi}{2}-\fr{\theta_1}{2} <\fr{\pi}{2}$ and $\sec (c \theta)\le 2\sec \theta$ for $\theta =\theta(p,t)$ on $t\in[0,T]$. 

\medskip\noindent By lemma \ref{lem-com2},\[\ba\square \sec(c \theta) &= c  \sec(c \theta) \tan (c \theta) \square \theta -\fr{1}{H^2}c ^2 [\sec(c \theta)\tan^2(c \theta) +\sec^3(c \theta) ] |\nabla \theta|^2] \\
&=\sec(c\theta)\left [ c\tan(c\theta) \square \theta - \fr{c^2}{H^2} (2\tan^2(c\theta)+1) |\na\theta|^2\right].\ea\] After defining  $\varphi:=\sec(c \theta)$, Lemma \ref{lem-theta} and $\fr{\na \varphi}{\varphi} = c\tan(c\theta) \na\theta$ imply  \[\ba\fr{\square\varphi}{\varphi} &= -\fr{c }{H^2r^2} \fr{\tan(c \theta)}{\tan \theta} (n-|\nabla r|^2-r^2|\nabla\theta|^2)  + \fr{2}{H^2}\bigl \la \fr{\nabla r}{r}, \fr{\nabla \varphi}{\varphi} \bigr\ra+\fr{2}{H} \bigl\la \nu, \fr{\bar\nabla \varphi}{\varphi}\bigr\ra\\
&\quad-\fr{2}{H^2}\fr{|\nabla \varphi|^2}{\varphi^2} -\fr{1}{H^2}c ^2 |\nabla\theta |^2 \\
 &\mbox{(since }n-|\nabla r|^2 -r^2|\nabla \theta|^2 =n-2\ge 0\quad\mbox{and}\quad \fr{\tan(c \theta)}{\tan\theta} \ge c )\\
&\le  -\fr{c ^2}{H^2r^2} (n-|\nabla r|^2) -\fr{2}{H^2} \fr{|\nabla \varphi|^2}{\varphi^2} +\fr{2}{H^2} \bigl\la \fr{\nabla r}{r}, \fr{\nabla \varphi}{\varphi} \bigr\ra+\fr{2}{H} \bigl\la \nu, \fr{\bar \nabla \varphi}{\varphi}\bigr\ra. \ea\] Note that this inequality  holds on $\{ x_{n+1} >0\}$, where our solution $M_t$ is located. 
Let ${\ds w:=\fr{\sec (c \theta)t}{Hr}= \varphi\psi r^{-1}t}$ where $\psi :=H^{-1}$. 
Then by Lemma \ref{lem-pg} and the previous inequality \be \label{eq-we}\ba&\fr{\square w}{w}+ \fr{1}{H^2}\fr{|\nabla w|^2}{w^2} \\
=& \bigl(\fr{\square \varphi}{\varphi}+\fr{1}{H^2} \fr{|\nabla \varphi|^2}{\varphi^2}\bigr) +\bigl( \fr{|A|^2}{H^2}+\fr{1}{H^2}\fr{|\nabla\psi|^2}{\psi^2} \bigr) -\fr{1}{2} \bigl( \fr{\square r^2}{r^2} + \fr{1}{H^2}\fr{|\nabla r^2|^2}{r^4}\bigr)+\fr{1}{t}\\
=& \bigl(\fr{\square \varphi}{\varphi}+\fr{1}{H^2} \fr{|\nabla \varphi|^2}{\varphi^2}\bigr) +\bigl( \fr{|A|^2}{H^2}+\fr{1}{H^2}\fr{|\nabla\psi|^2}{\psi^2} \bigr) +\bigl( \fr{n}{H^2r^2} -\fr{2}{H} \la \nu,  \fr{\bar \na r}{r} \ra- \fr{2}{H^2}\fr{|\nabla r|^2 }{r^2}\bigr)+\fr{1}{t}\\
\le& \left[\fr{|A|^2}{H^2} +\fr{1}{t} +\fr{2}{H} \bigl\la \nu, \fr{\bar \nabla \varphi}{\varphi}\bigr\ra-\fr{2}{H}\bigl\la \nu, \fr{\bar\nabla r}{r} \bigr \ra\right]\\
&\quad+\fr{1}{H^2} \left[ \fr{|\nabla \psi|^2}{\psi^2} + \fr{n}{r^2}-2\fr{|\nabla r|^2}{r^2} -c^2\fr{n-|\nabla r|^2}{r^2}-\fr{|\nabla \varphi|^2}{\varphi^2} +2\bigl \la\fr{\nabla r}{r}, \fr{\nabla \varphi}{\varphi}\bigr\ra \right] \\=&:(1)+(2).\ea\ee
Suppose a nonzero maximum of $w(p,t)$ on $M^n\times [0,t_1]$ is achieved at $(p_0,t_0)$ with $t_0\in(0,t_1]$. At this point, \[0=\fr{\nabla w}{w}= \fr{\nabla \psi}{\psi} + \fr{\nabla \varphi}{\varphi} - \fr{\nabla r}{r}\]and therefore \[\fr{|\nabla \psi|^2}{\psi^2} = \left | \fr{\nabla r}{r}-\fr{\nabla \varphi}{\varphi}\right|^2= \fr{|\nabla r|^2}{r^2} + \fr{|\nabla \varphi|^2}{\varphi^2} -2 \bigl \la \fr{\nabla r}{r}, \fr{\nabla \varphi}{\varphi} \bigr\ra. \]
At the maximum point, by plugging this into $(2)$ in \eqref{eq-we}, ${\ds (2)= -(c^2-1)  \fr{n-|\nabla r|^2}{H^2r^2}}$. 
Therefore at the maximum point,  \[0\le (1)  -(c^2-1) \fr{n-|\nabla r|^2}{H^2r^2}  .\]
Let us estimate terms in $(1)$. Note that by our choice of $c>1$, \[\left|\fr{\bar\nabla \varphi}{\varphi}\right| =| c\tan (c\theta) \bar\nabla \theta |\le \fr{c}{r} \tan(c\theta)=\fr{1}{r} \sin(c\theta)\sec(c\theta) \le \fr{2}{r\cos \theta}\le \fr{2}{r} \fr{1}{\sin \theta_1} =\fr{C}{r}\] for some $C=C(\theta_1)$. Next,  $\fr{|A|^2}{H^2} \le 1$ from convexity and  $|\bar\nabla r| \le 1$ imply at $(p_0,t_0)$, 
\[\ba0&\le -(n-|\na r|^2)\fr{c-1}{H^2r^2}+\fr{C}{Hr}+1+\fr{1}{t_0}\\
 &\le -\fr{c-1}{H^2r^2}+\fr{C}{Hr}+1+\fr{1}{t_0} \quad (\text{since }|\na r|^2\le 1,\, n\ge2)\\
 &\le -\fr{c-1}{2H^2r^2} + \fr{C}{2(c-1)} +\fr{1}{t_0}.\ea\]
Note that $0<t_0\le t_1$ and $1\le \varphi \le C$ on $M\times [0,t_1]$. 
Multiplication of  $(\varphi(p_0,t_0)t_0)^2$ implies \[w^2(p_0,t_0)=\left(\fr{\varphi t_0}{Hr}\right)^2 \le C t_1  \, (t_1+1).\] On any other point  $p\in M$ at $t=t_1$,\[\fr{1}{H^2r^2}(p,t_1) = \left(\fr{w(p,t_1)}{t_1\varphi(p,t_1)}\right)^2 \le \fr{w^2(p_0,t_0)}{t_1^2\varphi^2(p,t_1)} \le C\left(1+\fr{1}{t_1}\right).\] We used $\varphi\ge1$ in the last inequality. This finishes the proof of Theorem \ref{thm-mainest3}.
\end{proof}
\begin{remark}\label{rem-initial} If we define $\bar w:= \varphi\psi r^{-1}$ and follow the rest similarly, we get an estimate which includes the initial bound\[\fr{1}{H|F| } \le C \max \left( \sup_{M_0} \fr{1}{H|F|} , 1\right).\]
\end{remark} 
{\color{black}After the preprint of this work has been posted, M.N. Ivaki pointed that $\varphi (\sec \theta)$ with $\varphi$ of form \eqref{eqn-functions} could replace the use of $\sec (c\theta)$ in the previous proof. If $\varphi (\sec \theta)$ is used, one may avoid computing the evolution of $\theta$ as the evolution of  $\sec \theta = \la F, e_{n+1} \ra / |F|$ can be derived from Lemma \ref{lem-23}. }

  \section{Long time existence of non-compact solutions}\label{sec-existence}


Let us provide a sketch on how we prove Theorem \ref{thm-mainexistence}. Since Theorem \ref{thm-mainest3} was shown for compact solutions, we first construct a family of {\em compact convex} approximating solutions $M_{i,t}=\p \hat M_{i,t}$ which is monotone increasing in $i$. Each compact expanding solution $M_{i,t}$ exists for all time by \cite{Ge2}\cite{Ur}, thus we may define the limit $\hat M_{t} = \lim_{i \to +\infty} \hat M_{i,t}$ as a set. 
We will see, however, that the limit $\hat M_t$ is non-trivial only up to time $t<T$ and the proof of Theorem \ref{thm-noextension} will show $\hat M_t=\R^{n+1}$  for $t>T$, meaning $M_t=\p \hat M_t$ is empty.

Let us briefly explain where the connection between our solution $M_t$ in Euclidean space and solutions on the sphere is revealed. Recall $\hat\Gamma_0 $ is the link of the tangent cone of $\hat M_0$. For each $\delta>0$, there is a smooth strictly convex $\Gamma_0^\delta \subset\mathbb{S}^n$ such that  $ \hat \Gamma_0 \subset\subset \hat \Gamma_0^\delta$ and $T_\delta:=\ln |\mathbb{S}^n|-\ln|\Gamma_0^\delta| >T-\delta$. Here and later, we use $\hat A \subset \subset \hat B$ to denote $\overline{\hat A}\subset \text{int}(B)$. As explained in   Example \ref{example1}, $\Gamma_0^\delta$ admits a smooth  $\Gamma^\delta_t $ in $\mathbb{S}^n$ which exists up to time $T_\delta$ 
 and we will make use of $\mathcal{C}\Gamma_t^\delta$ as a barrier which contains $M_{i,t}$. Indeed, by moving its vertex far away from $M_0$ initially, we can make 
 $\mathcal{C}\Gamma_t^\delta$ (after the initial translation)   contain  $M_{i,t}$ up to time $T_\delta$, implying  that each $M_{i,t}$ satisfies condition \eqref{eqn-theta1} in Theorem \ref{thm-mainest3}   up to time $T-\delta$ for all $\delta>0$. Theorem \ref{thm-mainest3}  then leads to an upper  bound on $(|F|\, H)^{-1}$ showing that the IMCF on $M_{i,t}$ is  locally uniformly  parabolic and the rest is straightforward.  
 
\smallskip 
 The proof of Theorem \ref{thm-mainexistence} consists of four steps. First, we define our solution $M_t = \p\hat M_t$ as a limit 
of compact approximating solutions. Second, we show that $M_t$ is a smooth non-trivial solution for $t\in(0,T)$ using the  idea mentioned above. Third, we show that $M_t$ locally converges to $M_0$, as $t\to 0$. Finally,  we show  the strict convexity assertion of Theorem \ref{thm-mainexistence}. Since the proof is long, we will address some proofs of technical lemmas in Appendix.

\begin{proof}[Proof of Theorem \ref{thm-mainexistence}]  {\color{black}Suppose $T=T(M_0)$ satisfies $0<T\le \infty$ following the assumption of theorem.} We may also assume without loss of generality that $\hat M_0$ {\em does not contain any infinite straight lines}. Let us justify this claim. By Lemma \ref{lem-elementary}, $M_0= \mathbb{R}^k \times N_0$ for some non-compact convex hypersurface $N_0\subset \mathbb{R}^{n+1-k}$ and it is homeomorphic to $\R^{n-k}$ as $M_0$ is homeomorphic to $\R^n$. 
Note $0\le k\le n-2$ since $k=n-1$ or $n$ {\color{black}would imply  $\hat \Gamma_{0,\hat M_0}$ is a wedge in \eqref{eq-wedge} (when $k=n-1$) or a hemisphere (when $k=n$), and $T=0$ in both cases.} If $\hat \Gamma_{0,\hat M_0} \subset \mathbb{S}^{n}$ and $\hat \Gamma_{0, \hat N_0} \subset \mathbb{S}^{n-k}$ are the links of the tangent cones of $\hat M_0$ and $\hat N_0$, then we have  \be\label{eq-reduction} T(M_0)=\ln {|\mathbb{S}^{n{\color{black}-1}}|}-\ln {P(\hat \Gamma_{0,\hat M_0})} = \ln {|\mathbb{S}^{n-k{\color{black}-1}}|}-\ln{P(\hat \Gamma_{0,\hat N_0})}=T(N_0). \ee

In conclusion, $N_0^{n-k}=\p \hat N_0$ is a non-compact convex hypersurface in $\mathbb{R}^{(n-k)+1}$ (with $n-k\ge2$) homeomorphic to $\mathbb{R}^{n-k}$,
 i.e. $N_0$ satisfies the assumption of Theorem \ref{thm-mainexistence} and $\hat N_0$ has no infinite straight line inside. Since the existence of a solution $N_t$  of IMCF in $\R^{n-k+1}$ with initial data  $N_0$ implies that $M_t := \mathbb{R}^{k} \times N_t$ is  a solution of IMCF in $\R^{n+1}$ with initial data  $M_0$,
 we conclude that it would be sufficient, as we claimed, to assume that   $\hat M_0$ does not contain any infinite straight lines. In  this case,  
 the {\em link $\hat \Gamma_0$}  of the tangent cone of $\hat M_0$  {\em  does not contain any antipodal points}   and is {\em compactly contained in some open hemisphere}
 $H(v_0):=\{ p\in \mathbb{S}^n\, : \, \la p, v_0 \ra >0\}$ for some $v_0\in \mathbb{S}^n$ (see   Lemma 3.8 in \cite{makowski2013rigidity}).   

\smallskip 

Next, we create hypersurfaces $M_{i,0}=\p\hat M_{i,0}$ with certain properties which approximate $M_0$ from inside. A sequence of sets $ \hat A_i $ is called {\color{black}strictly} increasing if $\overline{\hat A_i}\subset \text{int}(\hat A_{i+1})$ (and we  write $A_i\subset\subset A_{i+1}$).  Let us denote the ball of radius $r$ centered at $p$ by $B_r(p)$. Setting $\hat \Sigma_{i,0}:=B_i(0)\cap \hat M_0$, $\hat \Sigma_{i,0} $  is a {\color{black}(weakly)} increasing sequence of  convex sets. By Lemma \ref{lem-euclidapprox}, each $\Sigma_{i,0}$ admits a strictly increasing approximation by compact smooth strictly convex hypersurfaces $\Sigma_{i,j}=\p \hat \Sigma_{i,j}$.  Furthermore, we may assume $d_H(\hat \Sigma_{i,0}, \hat \Sigma_{i,j}) \le j^{-1}$,  where $d_H$ is the Hausdorff metric \eqref{eq-Hausdorff}. Then a diagonal argument gives a sequence  $n_i \to \infty$ so that $\hat M_{i,0}:=\hat \Sigma_{i,n_i}$ 
strictly increases to $\hat M_0$. By \cite{Ur} and \cite{Ge},  each $M_{i,0}$  admits a unique smooth IMCF  $M_{i,t}=\p \hat M_{i,t}$,  which exists for all  $t\in[0,\infty)$. 
Note $M_{i,t}$ is smooth strictly convex hypersurface (see Remark \ref{remark-conv}) which is strictly monotone increasing in $i$ by the comparison principle.  {\color{black}We use the sequence of  solutions $M_{i,t}$ to define next the notion of {\em innermost candidate} solution from $\hat M_0$.  \begin{figure}
\centering
\def\svgscale{1}{
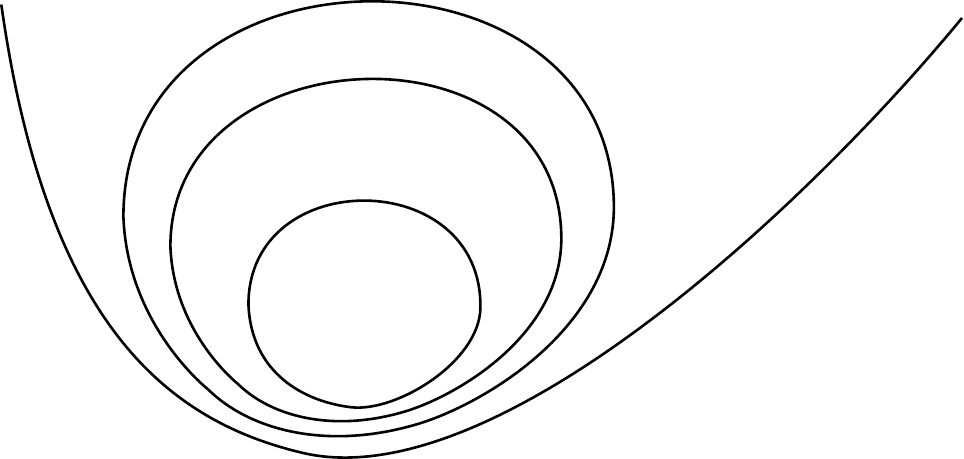{
\caption{Approximation of $M_0$}}}
\end{figure}

\begin{definition}\label{def-candidate} For a convex set $\hat M_0$ with non-empty  interior, let $\hat M_{i,0}$  be a sequence of compact sets with
smooth strictly convex boundary which strictly increases   to $\hat M_{0}$. 
We  define the {\em innermost candidate} solution from $\hat M_0$ as  \be\label{eq-candidate} \quad \hat M_t := \overline{\cup_{i=1}^{\infty} \hat M_{i,t}},  \quad \text{for }t\in[0,\infty)\ee 
where  $M_{i,t}=\p\hat M_{i,t}$  is a sequence of compact smooth strictly convex solutions  with initial data $M_{i,0}$.  $\hat M_t$ is convex by definition and the definition does not depend on $\hat M_{i,t}$ (See Remark \ref{remark-candidate}.)
\end{definition} }It remains to show  that  {\em $M_t :=\p \hat M_t$ defines  a non-empty strictly convex smooth solution to the flow, for $t\in(0,T(M_0))$, and converges to $M_0$ locally uniformly as $t\to 0^+$}. We need  an approximation lemma  for $\hat \Gamma_0$, the link   of the tangent cone   of $\hat M_0$ at infinity.

 \begin{claim}\label{claim-approximation}
Let $\hat\Gamma_0 \subset \mathbb{S}^n$ be a closed convex set contained in an open hemisphere.  Then there is a 
sequence of smooth, strictly convex hypersurfaces $\Gamma^j_0=\p\hat \Gamma^j_0$ in the open hemisphere which strictly decreases and $\cap_{j} \hat \Gamma_0^j = \hat \Gamma_0$. For every such sequence, $|\Gamma_0^j|=P(\hat\Gamma^j_0)\to P(\hat \Gamma_0).$

\begin{proof}[Proof of Claim \ref{claim-approximation}] This is a direct consequence of Lemma \ref{lem-outersphere} and Lemma \ref{lem-b5}. Since their   proofs 
require some other results from  convex geometry, we prove  them separately in the appendix.
\end{proof}
\end{claim}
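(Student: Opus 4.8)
The statement to prove is Claim~\ref{claim-approximation}: given a closed convex set $\hat\Gamma_0\subset\mathbb{S}^n$ contained in an open hemisphere, there is a strictly decreasing sequence of smooth strictly convex hypersurfaces $\Gamma_0^j=\p\hat\Gamma_0^j$ in that hemisphere with $\cap_j\hat\Gamma_0^j=\hat\Gamma_0$, and for every such sequence $|\Gamma_0^j|=P(\hat\Gamma_0^j)\to P(\hat\Gamma_0)$.

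\medskip

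\textbf{Plan.} The claim has two parts, and the excerpt already tells us both are consequences of results proved in the appendix (Lemma~\ref{lem-outersphere} for the existence of the approximating sequence and Lemma~\ref{lem-b5} for the convergence of perimeters), so the ``proof'' is really a matter of assembling these and checking that the hypotheses match. First I would invoke Lemma~\ref{lem-outersphere}: since $\hat\Gamma_0$ is closed, convex, and compactly contained in an open hemisphere $H(v_0)$, one can find a strictly decreasing sequence of smooth strictly convex hypersurfaces $\Gamma_0^j=\p\hat\Gamma_0^j$, each contained in $H(v_0)$, with $\hat\Gamma_0^{j+1}\subset\subset\hat\Gamma_0^j$ and $\cap_j\hat\Gamma_0^j=\hat\Gamma_0$. (Intuitively: pull back to the ball model of the hemisphere via central projection, where convexity in $\mathbb{S}^n$ becomes ordinary convexity; smooth a decreasing sequence of $\e_j$-neighborhoods of $\hat\Gamma_0$ by standard mollification and then push forward; strict convexity can be arranged by adding a small multiple of a fixed strictly convex function, e.g.\ the one whose level sets are geodesic spheres.) The point that needs a word is that the resulting $\hat\Gamma_0^j$ have non-empty interior in $\mathbb{S}^n$, so that $P(\hat\Gamma_0^j)=|\Gamma_0^j|$ by the first case of \eqref{eq-perimeter}; this is automatic because each is a smooth strictly convex closed hypersurface bounding a convex body with interior.

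\medskip

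\textbf{Convergence of perimeters.} For the second part, fix any strictly decreasing sequence $\hat\Gamma_0^j\searrow\hat\Gamma_0$ of smooth strictly convex bodies as above. The monotonicity $\hat\Gamma_0^{j+1}\subset\hat\Gamma_0^j$ forces $|\Gamma_0^j|$ to be monotone (the outer area of the larger convex set is at least that of the smaller, by the projection/nearest-point retraction argument standard in convex geometry), hence $|\Gamma_0^j|$ converges to some limit $L\ge P(\hat\Gamma_0)$, where the inequality $L\ge P(\hat\Gamma_0)$ is again a consequence of monotonicity and the definition of $P(\hat\Gamma_0)$ as the infimum of outer areas of approximating convex bodies with interior (Remark~\ref{remark-12}(i)). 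The reverse inequality $L\le P(\hat\Gamma_0)$ is exactly what Lemma~\ref{lem-b5} supplies: it asserts that the outer area is continuous from above along decreasing sequences of convex sets in $\mathbb{S}^n$ converging to $\hat\Gamma_0$ in the Hausdorff metric. Since $\hat\Gamma_0^j$ decreases to the closed set $\hat\Gamma_0$, we have $d_H(\hat\Gamma_0^j,\hat\Gamma_0)\to0$, so Lemma~\ref{lem-b5} gives $|\Gamma_0^j|\to P(\hat\Gamma_0)$, i.e.\ $L=P(\hat\Gamma_0)$. Combining the two parts completes the proof.

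\medskip

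\textbf{Main obstacle.} The genuinely substantive content — the existence of smooth strictly convex decreasing approximations inside the hemisphere, and the upper semicontinuity of outer area along Hausdorff-convergent decreasing sequences (which must also handle the degenerate case where $\hat\Gamma_0$ has empty interior, accounting for the factor $2$ in \eqref{eq-perimeter}) — is deferred to Lemmas~\ref{lem-outersphere} and \ref{lem-b5} in the appendix, so within the body of the paper the only real work is verifying that the hypotheses of those lemmas are met (closedness, convexity, containment in an open hemisphere) and that the output hypersurfaces have non-empty interior so that $P=|\cdot|$ applies. Thus the ``hard part'' is not here; here one just has to be careful that the two cited lemmas are stated in enough generality to cover an arbitrary decreasing sequence, not merely the particular one they construct — which is why the claim is phrased ``for every such sequence.''
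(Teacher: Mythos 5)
Your proposal is correct and takes essentially the same route as the paper: the paper's own proof of Claim \ref{claim-approximation} consists of exactly the observation that it follows from Lemma \ref{lem-outersphere} (non-empty interior case, where $P(\hat\Gamma_0)=|\Gamma_0|$) and Lemma \ref{lem-b5} (empty interior case), with all substantive work deferred to the appendix. Your additional bookkeeping (the case split according to \eqref{eq-perimeter} and the monotone-limit argument) is consistent with what those lemmas actually supply.
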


\smallskip 
 Now fix an arbitrary time   $t_0\in(0,T)$. 
By the claim, we may find a $j$ such that $T^{j}:=\ln |\mathbb{S}^{n-1}| - \ln | \Gamma^{j}_0| > t_0$. Since $\hat \Gamma_0$ is contained in the interior of $\hat \Gamma^{j}_0$, we may find a vector $v'_{j}\in \mathbb{R}^{n+1}$ so that $ \hat M_0 \subset \mathcal{C} \hat \Gamma_0^{j}+v_{j}'.$ 
By the definition of $\hat M_0$, it then follows that $\hat M_{i,0} \subset \hat M_0 \subset \mathcal{C} \hat \Gamma_0^{j}+v_{j}'$, for all $i$. 
Theorem 1.4 \cite{makowski2013rigidity} guarantees the existence of  a  smooth strictly convex IMCF solution  $\Gamma_t^{j}$  in $\mathbb{S}^n$ with initial data  $\Gamma^{j}_0$, for $t\in[0,T^{j})$. {\color{black}Note $\mathcal{C} \Gamma_t^j$ is an IMCF which is smooth away from the origin. Unless the convex cone $\mathcal{C} \Gamma_t^j$ is flat, the origin can not be touched from inside by a $C^2$ hypersurface. Therefore a version of comparison principle can be justified and we obtain $\hat M_{i,t} \subset \mathcal{C} \hat \Gamma^{j}_t+v_{j}' $ for $t\in[0,T^{j})$.} Since $\Gamma^{j}_t$ is a strictly convex solution which converges to an equator, we may find a direction $\omega_0 \in \mathbb{S}^n$ and small $\delta_0>0$ such that \[\la F-v_j' , \omega_0 \ra \ge ( \sin \delta_0 )\, |F-v_j'|,  \qquad\text{ for } t\in [0,t_0] \,\, \text{ on } \,\, M_{i,t} .  \]
By Theorem \ref{thm-mainest3}, we have a uniform bound \be\label{eq-HFv} (H|F-v'_j|)^{-1} \le C(1+t^{-1/2}) \quad \text{for }M_{i{\color{black},}t}\text{ on }t\in(0,t_0].\ee 
The  barrier $ \mathcal{C} \hat \Gamma^{j}_t+v_j' $ also shows $\hat M_t\neq \mathbb{R}^{n+1}$ and hence {\em $M_t$ is non-empty for $t\in[0, t_0]$. }

\smallskip 
 Let us next prove that $M_t$, for $t\in(0,T(M_0))$, is a smooth solution of IMCF.  First, note that $\hat M_0 \subset \text{int}(\hat M_{t})$ for $t>0$: 
indeed, since $M_0$ is locally in $C^{1,1}$, for every point $p\in M_0$,  there is an inscribed sphere at $p$ whose largest radius depends on the local $C^{1,1}$ norm
of $M_0$.  By the comparison principle between the sphere solution running from this inscribed sphere and $M_{i,t}$, we conclude that $p\in \text{int}(\hat M_t)$ for all $t>0$. {\color{black}(In practice, if $N_t=\p\hat N_t$ is a smooth compact solution containing the origin and $\hat N_0 \subset \hat  M_\tau$ some $\tau\ge0$, then the comparison and Lemma \ref{lem-scaling} imply $(1-\e)\hat N_t \subset \hat M_{i,\tau+t}$ for $i\ge i_\e$, showing $(1-\e)\hat N_t \subset \hat M_{\tau+t}$. We then take $\e \to 0$ to conclude $\hat N_t \subset \hat M_{\tau+t}$.)
}

\begin{figure}
\centering
\def\svgscale{1}{
\begingroup%
  \makeatletter%
  \providecommand\color[2][]{%
    \errmessage{(Inkscape) Color is used for the text in Inkscape, but the package 'color.sty' is not loaded}%
    \renewcommand\color[2][]{}%
  }%
  \providecommand\transparent[1]{%
    \errmessage{(Inkscape) Transparency is used (non-zero) for the text in Inkscape, but the package 'transparent.sty' is not loaded}%
    \renewcommand\transparent[1]{}%
  }%
  \providecommand\rotatebox[2]{#2}%
  \newcommand*\fsize{\dimexpr\f@size pt\relax}%
  \newcommand*\lineheight[1]{\fontsize{\fsize}{#1\fsize}\selectfont}%
  \ifx\svgwidth\undefined%
    \setlength{\unitlength}{314.19661766bp}%
    \ifx\svgscale\undefined%
      \relax%
    \else%
      \setlength{\unitlength}{\unitlength * \real{\svgscale}}%
    \fi%
  \else%
    \setlength{\unitlength}{\svgwidth}%
  \fi%
  \global\let\svgwidth\undefined%
  \global\let\svgscale\undefined%
  \makeatother%
  \begin{picture}(1,0.46267116)%
    \lineheight{1}%
    \setlength\tabcolsep{0pt}%
    \put(0,0){\includegraphics[width=\unitlength,page=1]{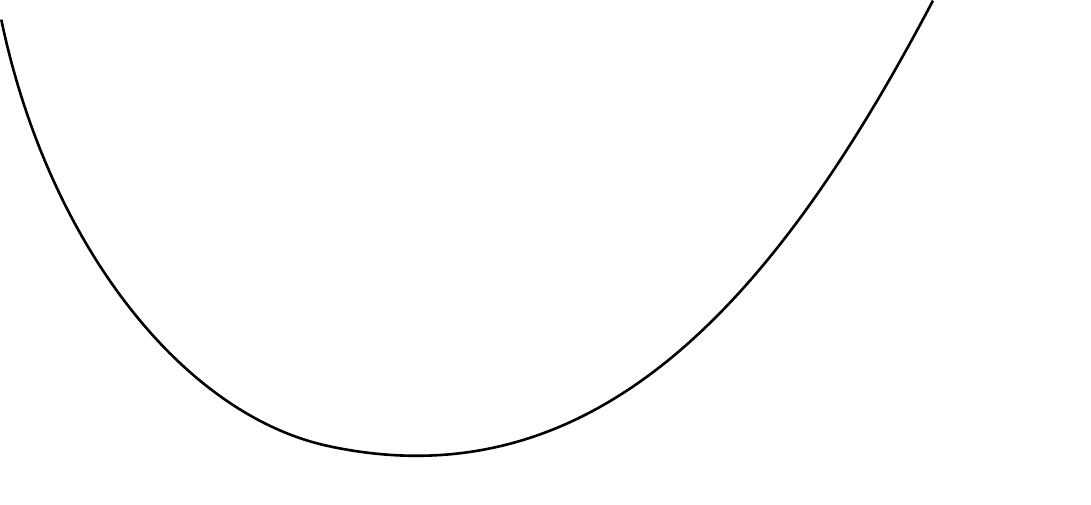}}%
    \put(0.69784255,0.31407201){\color[rgb]{0,0,0}\makebox(0,0)[lt]{\lineheight{1.25}\smash{\begin{tabular}[t]{l}$M_{t_0}$\end{tabular}}}}%
    \put(0,0){\includegraphics[width=\unitlength,page=2]{6.pdf}}%
    \put(0.32472127,0.00777922){\color[rgb]{0,0,0}\makebox(0,0)[lt]{\lineheight{1.25}\smash{\begin{tabular}[t]{l}$x_0=0$\end{tabular}}}}%
    \put(0,0){\includegraphics[width=\unitlength,page=3]{6.pdf}}%
    \put(0.6167699,0.39520365){\color[rgb]{0,0,0}\makebox(0,0)[lt]{\lineheight{1.25}\smash{\begin{tabular}[t]{l}$M_{t_0-\tau}$\end{tabular}}}}%
    \put(0,0){\includegraphics[width=\unitlength,page=4]{6.pdf}}%
    \put(0.3241499,0.18352206){\makebox(0,0)[lt]{\lineheight{1.25}\smash{\begin{tabular}[t]{l}$r_1e_{n+1}$\end{tabular}}}}%
    \put(0.3345235,0.28163904){\makebox(0,0)[lt]{\lineheight{1.25}\smash{\begin{tabular}[t]{l}$r_1-L\tau$\end{tabular}}}}%
    \put(0.70006656,0.01067049){\makebox(0,0)[lt]{\lineheight{1.25}\smash{\begin{tabular}[t]{l}$\{x_{n+1}=0\}$\end{tabular}}}}%
  \end{picture}%
\endgroup%
{
\caption{Outside hyperplane and inside sphere barriers around $x_0 \in M_{t_0}$}}}
\end{figure}

\smallskip 

Next goal is to show, for each $(x_0,t_0)\in\mathbb{R}^{n+1}\times(0,T(M_0))$ with $x_0 \in M_{t_0}$, there is a  spacetime neighborhood, say $U\times[t_0-\tau,t_0]$, around $(x_0,t_0)$ such that the portions of $M_{i,t}$ in this neighborhood can be represented as graphs over a fixed hyperplane with uniformly bounded $C^1$ norm. We may assume that $x_0=0$ and that $\{x_{n+1}=0\}$ is a supporting hyperplane of $\hat M_{t_0}$ satisfying $\hat M_{i,t} \subset \{x_{n+1}\ge 0\} $ for $t\le t_0$. For the discussion below we refer the reader to Figure 3. 
The observation in the previous paragraph says $2r_0:=\dist(\hat M_0,0)>0$. Note that the estimate on $H$ shown in Proposition \ref{lem-DH} holds  even when $M_0 \cap B_r(x_0)$ is empty. Thus, applying this proposition gives that  $H$ is bounded by $c_n r_0^{-1}$ on $M_{i,t}\cap B_{r_0}(0)$ for all $i$ and $t>0$. Recall that for smooth convex hypersurfaces, one has $|A|^2 \le H^2$. Since a (local) uniform limit of smooth functions with bounded $C^2$ norm is in $C^{1,1}$, $M_{t_0}$ has to be  $C^{1,1}$ around $0$ and hence there is some $r_1$ such that $B_{r_1}(r_1 e_{n+1}) \subset \hat M_{t_0}$. Let us choose $r_1$ sufficiently small so that $B_{r_1}(r_1 e_{n+1}) \subset  B_{r_0}(0).$ The  uniform bound  on $(H|F-v_j'|)^{-1}$ in \eqref{eq-HFv}  implies that  there is $L$ such that $H^{-1} \le L$ on $B_{r_0}(0)\cap M_{i,t}$,  for $t\in[t_0/2,t_0]$. Using this speed bound, 
we obtain that   \[ B_{r_1-L\tau}(r_1 e_{n+1}) \subset \hat M_{t_0-\tau} \quad \text{ for all }\tau \in[0, \min(\tfrac{t_0}2, \tfrac{r_1}L)].\] To prove this, let us define, for $-\tfrac{t_0}2\le s\le0$, \bee d(s):= \dist (r_1 e_{n+1} , M_{t_0+s})\text{ and }d_i(s):= \dist (r_1 e_{n+1} , M_{i,t_0+s}).\eee Note that $d_i(s)$ is a Lipschitz function and the bound on $H^{-1}$  gives \be\label{eq-1245}0\le  \dot d_i (s) \le L\quad \text { if } s\in [- \tfrac{t_0}{2},0]\quad \text{ and }\quad   r_1e_{n+1} \in \hat M_{i,t_0+s}.\ee Since $d(s)=\lim_{i\to\infty}d_i(s)$, $d(s)$ is Lipschitz and \eqref{eq-1245} holds for $d(s)$ as well.  Since $d(0) =r_1$ and $r_1e_{n+1} \in \hat M_{t_0+0}$, we may integrate \eqref{eq-1245} to conclude \[d(s)\ge r_1 + Ls \quad\text{ and }\quad r_1e_{n+1} \in \hat M_{t_0+s}\quad\text{ for all } s\in[-\min(\tfrac{t_0}2, \tfrac{r_1}L),0].\]

\smallskip

  In summary, we have shown that there are positive $r'$, $h'$, and $\tau'$ such that for  $i>i'$  large, we have  \[     B_{r'}(h'e_{n+1})  \subset   \hat M_{i,t} \subset \{x_{n+1}\ge 0\}\text{ for all }t\in[t_0-\tau',t_0].\] Therefore: {\em  if $D_{r'} := \{ x' \in \mathbb{R}^{n} \,:\, |x'|\le r'\}$, we can write $M_{i,t} \cap (D_{r'} \times [-h',h'])$ as a graph $x_{n+1} = u^{(i)} (x',t)$ on $D_{r'} \times [t_0-\tau', t_0]$. } Note we have a uniform bound of $|D_x u|$ on $D_{r'/2}$ by the ball and hyperplane barriers above and below. Since $M_{i,t}$ are solutions to  IMCF, the  graphs $x_{n+1}=u^{(i)}(x',t)$ evolve by the  fully nonlinear parabolic equation \be\label{eq-IMCFgraph}\p_t u =- \fr{(1+|Du|^2)^{1/2} }{H}= -(1+|Du|^2)^{1/2} \left[\text{div} \left(\fr{Du}{(1+|Du|^2)^{1/2}} \right)\right]^{-1}\ee and the equation is uniformly parabolic if $|Du|$, $H$, $H^{-1}$ are bounded. Therefore, our estimates above show that $u^{(i)}$ are solutions to a uniformly parabolic equation on $D_{r'/2}\times [t_0-\tau',t_0]$ and moreover  they are uniformly bounded, since $|u^{(i)}|\le h'$. Standard parabolic regularity theory implies a smooth subsequential convergence $u^{i} \to u $ on  $D_{r'/4}\times [t_0-\tau'/2,t_0]$.  Since the sequence of surfaces $M_{i,t}$ is  monotone in $i$, this proves that  $x_{n+1} = u(x',t)$ is a smooth graphical parametrization of $M_t$ which is a solution to  \eqref{eq-IMCFgraph}.  Our argument  holds in a  neighborhood around any point  $x_0\in M_{t_0}$  and for any  $t_0\in(0,T(M_0))$, therefore showing that   $M_t $ is a smooth solution to the IMCF for $t\in (0,T)$.  

\smallskip 

We will next obtain the  local  uniform convergence of $M_t \to M_0$,  as $t \to 0$,   by showing that $\cap_{t>0}\hat M_t  = \hat M_0$. Arguing by contradiction, suppose that $\hat M_0 \subsetneq \cap_{t>0}\hat M_t$, that is  there exists a point $p\in \text{int}( \cap_{t>0}\hat M_t)\setminus \hat M_0$. This means for each $t>0$, there is $i_t$ such that  $\hat M_{i,t}$ contains $p$ if $i>i_t$.  Let us define $d_i(t):= \dist(p, \hat M_{i,t})$.  Note that $d_i(0)>0$ and $d_i(t)$ is nonnegative  decreasing  function.  In view of the bound  \eqref{eq-HFv}, by choosing $t_0=T/2$, we conclude that  there is $C=C(p,M_0)>0$ such that if $0<t<T/2$, then 
\be\label{eqn-dit} \dot d_i(t)\ge - C\, t^{-1/2}.
\ee Furtermore,  the function  $d_i(t)$ is  Lipschitz continuous   and hence the above inequality holds a.e. One  obtains  this inequality by considering those points attaining the distance at each fixed time and estimate the rate of changes in the distances between those points and $p$ using the bound on $H^{-1}$. Integrating \eqref{eqn-dit}  from $0$ to $t$, we get   $d_i(t)- d_i(0) \ge -2C \, t^{1/2}$,  for $0<t<T/2$. Note that $d_i(0) \ge \dist(p,\hat M_0) >0$. There is $t_1>0$ such that $d_i(t) > \dist(p,\hat M_0)/2$ for all $i$ and $t\in(0,t_1)$. This is a contradiction to the assumption which says $d_i(t) = 0$ for large $i>i_t$. 
 
\smallskip 
Finally, we prove the strict convexity assertion in Theorem \ref{thm-mainexistence} using Appendix \ref{sec-strict}. If $\hat M_0$ contains an infinite line, a solution at later time $\hat M_t$ also contains the same line and hence $M_t$ it is not strictly convex by Lemma \ref{lem-elementary}.  Now suppose $\hat M_0$ has no infinite straight.  {\color{black}In view of Corollary \ref{cor-strict}, it suffices to show  $\mathcal{H}^n (\nu [M_{t}] )>0$ for all $t\in(0,T)$. Let us fix an arbitrary $t_0 \in (0,T)$.}
In the construction of $\hat M_t$, we have shown that $M_{i,t_0}$ (and hence $M_{t_0}$) are contained in some round cone $\hat C :=\{x\in\mathbb{R}^{n+1}\,:\, \la x-v, \omega \ra \ge (\sin\delta) |x-v| \}$.  {\color{black}Observe that the outward normal of each supporting hyperplane of $\hat C$ should belong to $\nu [M_{t_0}]$ as we may translate the hyperplane to make it support $\hat M_{t_0}$ somewhere. We directly compute the set of outward normals of supporting hyperplanes of $\hat C$ as  $\{ v \in \mathbb{S}^n\,:\,  \la v, -\omega \ra \ge \cos \delta \}=:\hat \Gamma'$. This shows $\mathcal{H}^n (\nu [M_{t_0}] )\ge \mathcal{H}^n(\hat \Gamma')>0$}, finishing the proof.

 \end{proof}

{\color{black}\begin{remark} \label{remark-candidate}
The definition of {\em innermost candidate} in \eqref{eq-candidate} does not depend on the choice of approximation $\hat M_{i,0}$: if $\hat M_{i,0}$ and $ \hat M'_{i,0}$ are two approximations of $\hat M_0$, we have $\hat M_{i,0} \subset\subset \hat M'_{n_i,0}$ for large $n_i$, showing that $\hat M_{i,t} \subset \cup_j \hat M'_{j,t}$ and vice versa. By the same argument, the comparison principle holds between two innermost candidates if one contains the other at initial time. The solution is innermost in the sense described in Lemma \ref{lem-noncptavoid}. This fact will be used in the remaining of the section. 
\end{remark}}

\begin{lemma}\label{lem-noncptavoid}
Let $N_t=\p \hat N_t$ for $t>0$ be a smooth solution to the IMCF with initial data $\hat N_0:=\cap_{t>0} \hat N_t$ and $\hat M_t$ {\color{black}be the innermost candidate from $\hat M_0$ by Definition \ref{def-candidate}.} If $\hat M_0 \subset \hat N_0$, then $\hat M_{t} \subset \hat N_t$ as long as $N_t=\p \hat N_t$ exists.

 \begin{proof}
$\hat M_{i,0} \subset \subset \hat N_0$ implies  $\hat M_{i,t}\subset \hat N_{t}$ by the comparison principle, showing $\hat M_{t} \subset \hat N_t$.   
\end{proof}\end{lemma}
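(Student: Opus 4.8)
The plan is to reduce the lemma to the avoidance (comparison) principle for the compact approximating solutions and then pass to the union. Since $\hat M_t=\overline{\bigcup_i\hat M_{i,t}}$ and $\hat N_t$ is closed, it suffices to prove $\hat M_{i,t}\subseteq\hat N_t$ for every fixed $i$ and every $t$ in the maximal interval $[0,T')$ on which $N_t$ exists; the union over $i$ followed by the closure then gives $\hat M_t\subseteq\hat N_t$. So I fix $i$ and first record that, since $\hat M_{i,0}\subset\subset\hat M_0\subseteq\hat N_0$, the compact set $\hat M_{i,0}$ lies in $\operatorname{int}(\hat N_0)$, so $\delta_i:=\operatorname{dist}\bigl(\hat M_{i,0},\,\R^{n+1}\setminus\hat N_0\bigr)>0$.

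Next I would run the standard first-touching-time argument. For short times the inclusion is immediate: on the compact smooth solution $M_{i,t}$ the speed $H^{-1}$ is bounded on $[0,\tau]$, so $\hat M_{i,t}$ stays inside the $(\delta_i/2)$-neighborhood of $\hat M_{i,0}$ for small $t$, and that neighborhood is contained in $\hat N_0\subseteq\hat N_t$ (here $\hat N_0=\cap_{s>0}\hat N_s\subseteq\hat N_t$ is used); hence $\hat M_{i,t}\subset\subset\hat N_t$ on some $(0,\tau_i)$. Now set $t^*=\sup\{t<T':\ \hat M_{i,s}\subseteq\hat N_s\ \text{for all}\ s\le t\}\ge\tau_i>0$ and suppose $t^*<T'$. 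By continuity $\hat M_{i,t^*}\subseteq\hat N_{t^*}$, and since $\hat M_{i,t^*}$ is compact it must touch $N_{t^*}$ at some point $p$ (otherwise the strict inclusion would persist past $t^*$), with $M_{i,t^*}$ internally tangent to $N_{t^*}$ there. On a space-time neighborhood of $(p,t^*)$ both solutions are graphs $w_M\le w_N$ over the common tangent hyperplane solving the graphical IMCF equation \eqref{eq-IMCFgraph}, which is uniformly parabolic there because the gradients are bounded and $H$ is bounded above and below (strict mean convexity of $M_{i,t}$ and smoothness of $N_t$ for $t>0$). Then $w_N-w_M\ge0$ solves a linear uniformly parabolic equation and vanishes at the interior spatial point $(p,t^*)$, so the strong maximum principle makes it vanish on a backward parabolic neighborhood; this forces $M_{i,s}$ and $N_s$ to coincide on an open set for $s$ slightly below $t^*$, and a connectedness argument (the coincidence set is open by the strong maximum principle, closed by compactness, and $M_{i,s}$ is connected without boundary) upgrades this to $M_{i,s}=N_s$, which propagated backwards contradicts the strict inclusion $\hat M_{i,t}\subset\subset\hat N_t$ at small $t$. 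Hence $t^*=T'$ and we are done.

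The step I expect to be the main obstacle is making the comparison principle rigorous in this generality: $N_t$ is assumed neither compact nor convex, so one must justify that a touching can only be a \emph{first} touching and then exclude the degenerate case in which the two hypersurfaces coincide along an open piece (this is where unique continuation / backward uniqueness for the flow is needed); and $N_t$ is smooth only for $t>0$, which is precisely why the hypothesis $\hat N_0=\cap_{t>0}\hat N_t$ is invoked — together with compactness of $M_{i,0}$ and the short-time bound on $H^{-1}$ it produces the inclusion near $t=0$. The remaining ingredients — the reduction to a fixed $i$, the finite-speed estimate on the compact solution, and the uniform parabolicity of \eqref{eq-IMCFgraph} near the touching point — are routine.
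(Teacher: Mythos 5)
Your proposal is correct and follows the same route as the paper: the paper's one-line proof reduces the statement to the compact approximators $\hat M_{i,0}\subset\subset \hat N_0$ and simply cites ``the comparison principle,'' and what you have written is exactly a proof of that comparison principle (the positive initial separation $\delta_i$ plus the bounded speed of the compact flow to get past the non-smooth time $t=0$ using $\hat N_0=\cap_{t>0}\hat N_t\subseteq \hat N_t$, then a touching-time/strong-maximum-principle argument), followed by the same passage to the union over $i$. One small simplification: if you take the critical time to be the \emph{first touching time} $\sigma^*:=\inf\{s>0:\ M_{i,s}\cap N_s\neq\emptyset\}\ge\tau_i>0$ rather than the last time of inclusion, a single application of the strong maximum principle at a touching point $(q,\sigma^*)$ already produces points of contact at times slightly before $\sigma^*$ and hence an immediate contradiction, so the connectedness and backward-propagation steps you flag as delicate (which in any case require only an iterated forward strong maximum principle, not genuine backward uniqueness) can be dispensed with.
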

 {\color{black}Next lemma shows conical solutions can be used as barriers from inside.}
\begin{lemma}\label{lem-conicalcomp} Let $\Gamma_0=\p \hat \Gamma_0\subset \mathbb{S}^n$ be a smooth strictly convex hypersurface in  $\mathbb{S}^n$ and $\Gamma_t$ be the unique solution to the IMCF obtained by  \cite{makowski2013rigidity} and \cite{Ge}. {\color{black}Let $\hat N_t$ the innermost candidate from $\hat N_0$ by Definition \ref{def-candidate}. If $\mathcal{C}\hat\Gamma_0 \subset \hat N_0$, then $\mathcal{C}\hat \Gamma_t \subset \hat N_t$ for $t\in[0,\ln |\mathbb{S}^{n-1}| -\ln |\Gamma_0|)$.  }

\begin{proof}
After  a rotation, we may assume that $e_{n+1}$ is in the interior of $\hat \Gamma_0$ in $\mathbb{S}^n$. Then $\mathcal{C}\Gamma_0$ can be  written as a graph of an entire homogeneous function $x_{n+1} =f (x)$, $x\in\mathbb{R}^n$, which is uniformly Lips{\color{black} ch}itz. Since the graph is a cone, we have $f(\lambda x) = \lambda f(x)$. Let $\eta $ be a usual smooth radially symmetric mollifier supported on $B_1(0)$, and define $$f_\e(x):= f*\eta_\e(x)=\int_{\R^n}  f(x+w)\eta(\e^{-1}w) \e^{-n}\, dw .$$ The convexity of this mollified function $f_\e$ can be easily checked:

\be\ba \label{eq-1fe}f_\e(\lambda x+ (1-\lambda) y) &= \int f(\lambda x+ (1-\lambda) y+w)\eta(\e^{-1}w) \e^{-n}dw  \\& \ge  \int[ \lambda f( x+w )+ (1-\lambda) f( y+w)]\eta(\e^{-1}w) \e^{-n} \, dw  \\& =\lambda f_\e ( x) + (1-\lambda)f_\e(y).\ea\ee
Moreover, $f_\e \ge f$ since \be\ba\label{eq-2fe} f_\e(x)&= \int f(x+w)\, \fr{\eta(\e^{-1}w)+\eta(-\e^{-1}w)}{2} \e^{-n}   dw\\&= \int \frac{f(x+w)+f(x-w)}{2}\eta(\e^{-1}w) \e^{-n} dw  \ge \int f(x)\eta(\e^{-1}w) \e^{-n}dw =f(x),\ea\ee
the uniform Lipschitz condition of $f$ implies that $\Vert f_\e- f\Vert_\infty <\infty $ for all $\e>0$,  and that $\Vert f_\e-f \Vert_\infty \to 0$,  as $\e\to 0$. Next,  observe that
\be\ba \label{eq-fscaling}\fr{f_1(\lambda x)}{\lambda} =  \int \fr{f(\lambda x+w)}{\lambda}\eta(w) dw =  \int f(x+\fr w\lambda)\eta(w) dw=\int f(x+y)\eta(\lambda y)\lambda^{n} dy=f_{\lambda^{-1}}(x).\ea\ee Let $M_0=\p\hat M_0$ be the convex hypersurface $\{(x,x_{n+1} ) \,:\, x_{n+1}=f_1(x) \}$. Then \eqref{eq-fscaling} implies that the tangent cone of $M_0$ at infinity is $\mathcal{C}\Gamma_0$. Theorem  \ref{thm-mainexistence} shows the existence of a  smooth solution $M_t$,  for $t\in[0, \ln |\mathbb{S}^{n-1}|-\ln |\Gamma_0|)$ with initial data $M_0$.

\smallskip 

We will show next that: {\em for $t\in[0, \ln |\mathbb{S}^{n-1}|-\ln |\Gamma_0|)$,   $\e M_t$ converges to $\mathcal{C} \Gamma_t$ in $L_{loc}^\infty$.}  Assuming this, let us first finish the proof of the lemma: for each $\e>0$, \eqref{eq-fscaling} implies $\e M_0 = \{x_{n+1} =f_\e(x)\}$ and thus \eqref{eq-2fe} implies  $\e \hat M_0 $ is contained in $\hat N_0$.  {\color{black} $\e \hat M_t$ is an innermost candidate as $\hat M_t$ is.  Thus $\e \hat M_t \subset \hat N_t$ by the comparison in Remark \ref{remark-candidate}} and, by taking $\e\to0$, we conclude $\mathcal{C}\hat \Gamma_t \subset \hat N_t$. 

\smallskip 

We are left to prove the convergence of $\e M_t$ to $\mathcal{C} \Gamma_t$.   Following the construction in Theorem \ref{thm-mainexistence}, let $M_{i,t}$ be compact convex solutions which approximate $M_{t}$ from inside. Since $\hat M_{i,0}$ is contained in $\mathcal{C}\hat \Gamma_0$, the comparison principle implies $\hat M_{i,t} \subset \mathcal{C}\hat \Gamma_t$, showing $\hat M_t \subset \mathcal{C}\hat \Gamma_t$. Let us express $M_t$ by the entire graphs $x_{n+1}=f_1(x,t)$ and $\mathcal{C}\Gamma_t$ by $x_{n+1}= f(x,t)$. Observe that the gradients $|Df_1|$ and $|D f|$ are uniformly bounded on $(x,t)\in\mathbb{R}^n\times [0, |\mathbb{S}^{n-1}|-\ln |\Gamma_0|)$. This is because $e_{n+1}$ is an interior point of $\hat \Gamma_0$ and hence $\hat M_t$ and $\mathcal{C}\hat \Gamma_t$ contain a fixed round convex cone whose axis in positive $e_{n+1}$ direction. Next, note  $f(\cdot,0)\in C^{\infty}_{loc}(\mathbb{R}^{n} \setminus \{0\})$ and $f_\e (\cdot,0)\to f(\cdot,0)$ in $C^{\infty}_{loc} (\mathbb{R}^{n} \setminus\{0\})$ as $\e\to0$. This convergence and  \eqref{eq-fscaling} imply that there is $C>0$ such that  $H ( |x|+1 ) \le C$ for $M_0$.   
Proposition \ref{lem-DH} then implies that the mean curvature $H(x,t)$ of $M_t$ at $(x,f_1(x,t))$ satisfies the bound  $H(x,t)(|x| +1)\le C'$. Next, since $\mathcal{C}\Gamma_t$ works as a conical barrier outside, Theorem \ref{thm-mainest3} (see also Remark \ref{rem-initial}) can be applied to the approximating compact solutions $M_{i,t}$ to conclude that $(H|F|)^{-1} \le C_\delta$ on $M_t$ for $t\in[0,T(M_0)-\delta]$.
  
All the  bounds above imply that  the solutions $\e M_t$ when viewed as entire graphs, have uniform gradient bounds, locally uniform height bounds, and locally uniform bounds of $H$ and $H^{-1}$ on compact domains which do not contain the origin. In the previous statement, the uniformity of estimates holds both in $\e>0$ and $t\in[0,T(M_0)-\delta]$ for all fixed $\delta>0$.  By the regularity estimates of uniformly parabolic equations, we may pass to a sub-sequential limit {\color{black}and obtain, as $\e \to 0$, $\e f(\e^{-1} x,t )$ converges to some  $f_0 (x,t)$ smoothly on $(B_{\delta^{-1}}\setminus B_\delta )\times [0,T-\delta)$ for all $\delta>0$.} It follows that $\{x_{n+1} = f_0(x,t)\}$ is a smooth solution to IMCF on $\R^n \setminus \{ 0 \}$.  Meanwhile,  $\{x_{n+1} = f_0(x,t)\}$ represents a convex cone as it is the blow-down of $M_t$. Combining these together, $\{x_{n+1} = f_0(x,t)\}= \mathcal{C} \Gamma'_t$ for some smooth convex hypersurface $\Gamma'_t \subset \mathbb{S}^n$ and $\Gamma'_t$ evolves by the IMCF.  Since $\Gamma_t $ is the unique solution to IMCF with initial data $\Gamma_0 = \Gamma_0'$, we conclude that  $\Gamma_t =\Gamma'_t$.  This proves that $f_0(x,t) = f(x,t)$ and the convergence of $\e M_t$ to $\mathcal{C}\Gamma_t$.

\end{proof}
\end{lemma}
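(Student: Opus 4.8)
The plan is to use the cone $\mathcal{C}\Gamma_t$ as an \emph{inner} barrier for the innermost candidate $\hat N_t$. The obstruction to invoking the comparison principle of Lemma \ref{lem-noncptavoid} directly is that $\mathcal{C}\Gamma_t$, while an IMCF solution smooth away from the origin (Example \ref{example1}), is singular at its tip, so it is not a smooth solution in the sense required there. I would therefore replace it by a smooth \emph{entire} convex graph that approximates it from inside, run the flow of that graph via Theorem \ref{thm-mainexistence}, compare, and then recover $\mathcal{C}\Gamma_t$ in a blow-down limit.

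Concretely, after a rotation putting $e_{n+1}$ in the interior of $\hat\Gamma_0$, write $\mathcal{C}\Gamma_0$ as the graph $x_{n+1}=f(x)$ of a convex, homogeneous degree one, uniformly Lipschitz function, and mollify $f_\epsilon:=f*\eta_\epsilon$ with $\eta$ a radial mollifier. One checks (Jensen, radial symmetry of $\eta$, and a change of variables) that $f_\epsilon$ is convex, $f_\epsilon\ge f$, $\|f_\epsilon-f\|_\infty<\infty$ with $\|f_\epsilon-f\|_\infty\to 0$, and the scaling identity $\lambda^{-1}f_1(\lambda x)=f_{\lambda^{-1}}(x)$. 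Hence $M_0:=\{x_{n+1}=f_1(x)\}$ is a smooth convex hypersurface whose tangent cone at infinity is $\mathcal{C}\Gamma_0$ and $\epsilon M_0=\{x_{n+1}=f_\epsilon(x)\}$; by Theorem \ref{thm-mainexistence} it generates a smooth convex IMCF $M_t$ on $[0,T(M_0))$ with $T(M_0)=\ln|\mathbb{S}^{n-1}|-\ln|\Gamma_0|$, and $\epsilon M_t$ is again such a solution by Lemma \ref{lem-scaling}. Since $f_\epsilon\ge f$ forces $\epsilon\hat M_0\subset\mathcal{C}\hat\Gamma_0\subset\hat N_0$, and $\epsilon\hat M_t$ is itself an innermost candidate (Definition \ref{def-candidate}), the comparison between innermost candidates in Remark \ref{remark-candidate} gives $\epsilon\hat M_t\subset\hat N_t$ for all $\epsilon>0$. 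Thus it suffices to prove $\epsilon M_t\to\mathcal{C}\Gamma_t$ locally uniformly as $\epsilon\to0$; letting $\epsilon\to0$ then yields $\mathcal{C}\hat\Gamma_t\subset\hat N_t$ on the claimed interval.

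For the convergence, write $M_t$ and $\mathcal{C}\Gamma_t$ as entire graphs $x_{n+1}=f_1(x,t)$ and $x_{n+1}=f(x,t)$. Comparing the compact approximants of $M_t$ with $\mathcal{C}\Gamma_t$ gives the reverse inclusion $\hat M_t\subset\mathcal{C}\hat\Gamma_t$, so both $\hat M_t$ and $\mathcal{C}\hat\Gamma_t$ contain a fixed round cone about $e_{n+1}$; this supplies a uniform gradient bound for $f_1,f$ and locally uniform height bounds for $\epsilon f_1(\epsilon^{-1}x,t)$. From the smooth convergence $f_\epsilon(\cdot,0)\to f(\cdot,0)$ in $C^\infty_{loc}(\mathbb{R}^n\setminus\{0\})$ and the scaling identity one obtains $H(|x|+1)\le C$ on $M_0$, hence $H(x,t)(|x|+1)\le C'$ on $M_t$ by Proposition \ref{lem-DH}; and since $\mathcal{C}\Gamma_t$ is an outer conical barrier supplying the opening-angle hypothesis, Theorem \ref{thm-mainest3} (with Remark \ref{rem-initial}) applied to the compact approximants yields the scale-invariant bound $(H|F|)^{-1}\le C_\delta$ on $M_t$, $t\in[0,T(M_0)-\delta]$. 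These bounds—uniform in $\epsilon$ and in $t\in[0,T(M_0)-\delta]$, and locally uniform on $\mathbb{R}^n\setminus\{0\}$—make the graph equation \eqref{eq-IMCFgraph} uniformly parabolic there, so parabolic regularity extracts a subsequential smooth limit $\epsilon f_1(\epsilon^{-1}x,t)\to f_0(x,t)$, a smooth IMCF solution on $\mathbb{R}^n\setminus\{0\}$. Being the blow-down of $M_t$, $f_0(\cdot,t)$ is a convex cone, i.e. $\{x_{n+1}=f_0(x,t)\}=\mathcal{C}\Gamma'_t$ with $\Gamma'_t$ a smooth convex hypersurface in $\mathbb{S}^n$ evolving by IMCF with $\Gamma'_0=\Gamma_0$; uniqueness of the spherical solution (\cite{makowski2013rigidity},\cite{Ge}) forces $\Gamma'_t=\Gamma_t$, hence $f_0=f$ and $\epsilon M_t\to\mathcal{C}\Gamma_t$. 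The main obstacle is precisely assembling these uniform estimates simultaneously near the origin and near spatial infinity, scale-invariantly in $\epsilon$, so that the limit is forced to be $\mathcal{C}\Gamma_t$ and not merely some convex cone evolving by IMCF.
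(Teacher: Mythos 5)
Your proposal follows essentially the same route as the paper's proof: mollify the homogeneous graph function to produce a smooth entire convex graph $M_0$ with tangent cone $\mathcal{C}\Gamma_0$, use the scaling identity to identify $\e M_0$ with the $\e$-mollification, compare innermost candidates to get $\e\hat M_t\subset\hat N_t$, and then recover $\mathcal{C}\Gamma_t$ as the blow-down limit via the same curvature bounds (Proposition \ref{lem-DH}, Theorem \ref{thm-mainest3}) and the uniqueness of the spherical flow. The argument is correct and matches the paper's in all essential steps.
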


 \begin{prop}\label{prop-interior} {\color{black} Let $M_t=\p \hat M_t$ be a convex non-compact solution obtained from Theorem \ref{thm-mainexistence} and $\hat \Gamma_t$ be the link of the tangent cone of $\hat M_t$. Suppose $\hat \Gamma_0$ has no interior in $\mathbb{S}^{n}$ but $\mathcal{H}^{n-1}(\hat \Gamma_0)>0$. Then $\hat\Gamma_t$ has interior for $t>0$.}

\begin{proof}\begin{figure}\centering
\def\svgscale{1}{
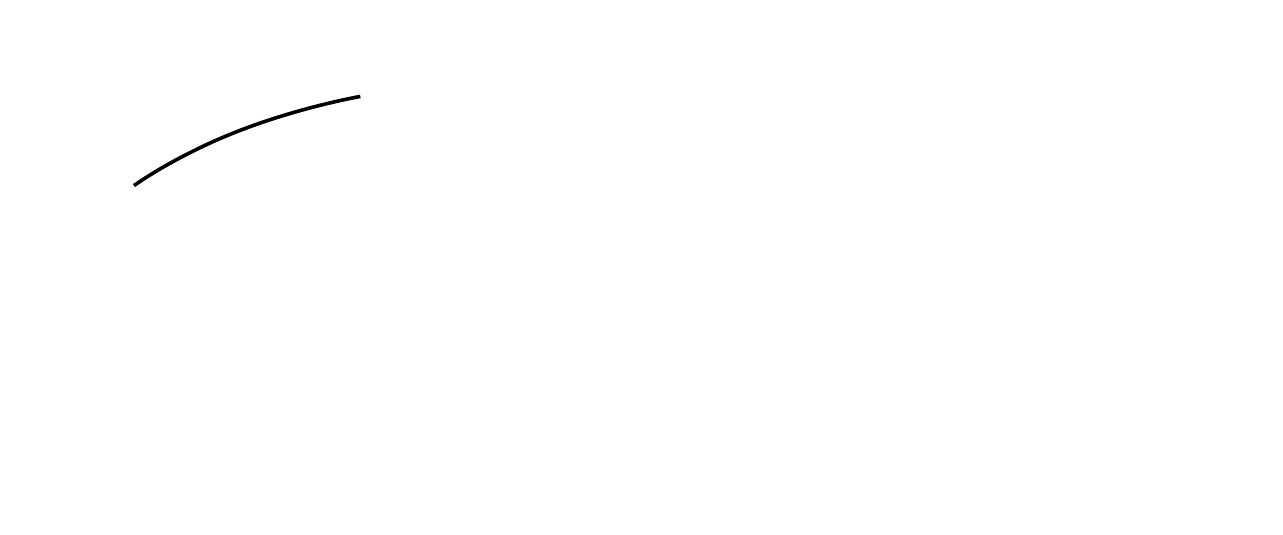
\caption{Proposition \ref{prop-interior}}}
\end{figure} {\color{black} We may assume that $\mathcal{H}^{n-1} (\hat \Gamma_0 \cap \{x_{n+1}>0\}) >0$ and 
$0\in \text{int}(\hat M_0)$, by rotating and translating the coordinates respectively. Since $\mathcal{C}\hat\Gamma_0$ is  convex in $\mathbb{R}^{n+1}$,  $\mathcal{C}\hat \Gamma_0 \cap\{x_{n+1}= 1\}=: \hat \Omega$ is convex in $\mathbb{R}^n$. Moreover, $\hat \Omega $ has no interior in $\mathbb{R}^{n}$ and $\mathcal{H}^{n-1} (\hat \Omega ) >0$ since the gnomonic projection  \eqref{eq-varphi} is (locally) bi-Lipschitz map between $\mathbb{R}^n$ and $\mathbb{S}^n \cap \{x_{n+1}>0\}$. If a convex set in Euclidean space has no interior, then it should be contained in some hyperplane. Therefore, $\hat \Omega$ is contained in a $(n-1)$-plane and $\hat \Omega$ has interior in that $(n-1)$-plane as otherwise $\hat \Omega$ would be contained in a $(n-2)$-plane, contradicting $\mathcal{H}^{n-1} (\hat \Omega ) >0$. As a result, $\hat \Omega$ in $\mathbb{R}^n\times\{1\}$ contains a $(n-1)$-dimensional disk of some radius $r_0$.} The cone generated by this $(n-1)$-disk is contained in $\mathcal{C} \hat \Gamma_0$ and thus $\mathcal {C}\hat \Gamma_0$ should contain a rotated image of the following $n$-dimensional cone for some $0<r\le r_0$: \[ \mathcal{C}:=\{(x_1,x_2,\ldots,x_{n},0)\in\mathbb{R}^{n+1}\,:\, \sqrt{x_1^2 +\cdots +x_{n-1}^2} \le r x_{n} \}.\]  
By letting  $c :=r/\sqrt{1+r^2}$, observe $B^n_{ca}(a e_{n+1}) \times \{0\}$ is contained in $\mathcal{C}$ for all $a>0$.   Since we assumed $0\in \text{int}(\hat M_0)$, there are $\vec{v} \in \hat \Gamma_0\subset \mathbb{S}^n$, $\e>0$, and a rotation operator ${J}$ such that the family of expanding thin disks $a\vec{v}+ {J}(B^n_{ca}(0)\times (-\e,\e))$ are contained in $\hat M_0$ for all $a>0$.  
\begin{claim}\label{claim-barrier} Let $\hat D_{R,\e} := B^{n}_R(0) \times (-\e,\e) \subset \R^{n+1}$ be a thin disk $\e\in (0,R/100)$ and $N_t=\p \hat N_t$ be a smooth IMCF. If $\hat D_{R,\e}\subset\hat N_0$, then there is $c_n>0$ such that $B^{n+1}_{c_n Rt}(0)\subset \hat N_t$ for $t\in[0,c_n]$. 
\begin{proof}[Proof of Claim]
Let us smoothen out the edges of $D_{R,\e}:= \partial \hat D_{R,\e}$ to obtain a smooth pancake like convex hypersurface $\Sigma_{R,\e}$ by a similar method of Lemma \ref{lem-conicalcomp}: consider the convex conical hypersurface in $\mathbb{R}^{n+2}$ generated by $D_{R,\e}\times \{1\} \subset \mathbb{R}^{n+2}$ from the origin. We can represent this cone by an entire graph $x_{n+2}=f(x)$ of a $1$-homogeneous function $f(x)$. Then 
$f^{-1}(\{1\}) = D_{R,\e}$. If we consider the regularization of $f$, say $f_\delta$, as constructed in Lemma \ref{lem-conicalcomp}, then $f_\delta \ge f$ and it is smooth convex function. For sufficiently small $\delta$, the level set $f_{{\color{black}\delta}}^{-1}(\{1\})=:\Sigma_{R,\e}$ is a smooth convex hypersurface in $\mathbb{R}^{n+1}$ which is contained in $\hat D_{R,\e}$. Since the regularization of a linear function is the same as itself,  $D_{R,\e}$ and $\Sigma_{R,\e}$ coincide on $B^{n+1}_{R/2}(0)$ for small $\delta>0$.
  
Observe that  $\Sigma_{R,\e}$  has the same symmetry as $D_{R,\e}$.  i.e.  $O(n)$-rotational symmetry and reflection symmetry with respect to $\{ x_{n+1} =0\}$. Thus the IMCF  $\Sigma_{R,\e}(t)$ starting at $\Sigma_{R,\e}$ must contain  two points $(0,\e+c(t))$ and $(0,-\e-c(t))$, for each $t>0$, at which the  normal vectors to $\Sigma_{R,\e}(t)$ are $e_{n+1}$ and $-e_{n+1}$, respectively. In view of Lemma \ref{lem-DH}, $c'(t) > c R$ as long as $\e+c(t) <R/2$. Since $\Sigma_{R,\e}(t)$ contains these two points and the  disk $B^n_{R/2} \times \{0\}$, convexity implies that $\hat \Sigma_{R,\e}(t)$ includes  our desired ball. This finishes the proof as $\hat \Sigma_{R,\e}(t)\subset \hat N_t$ by the comparison principle.
 \end{proof} 
 \begin{figure}
\centering
\def\svgscale{1}{
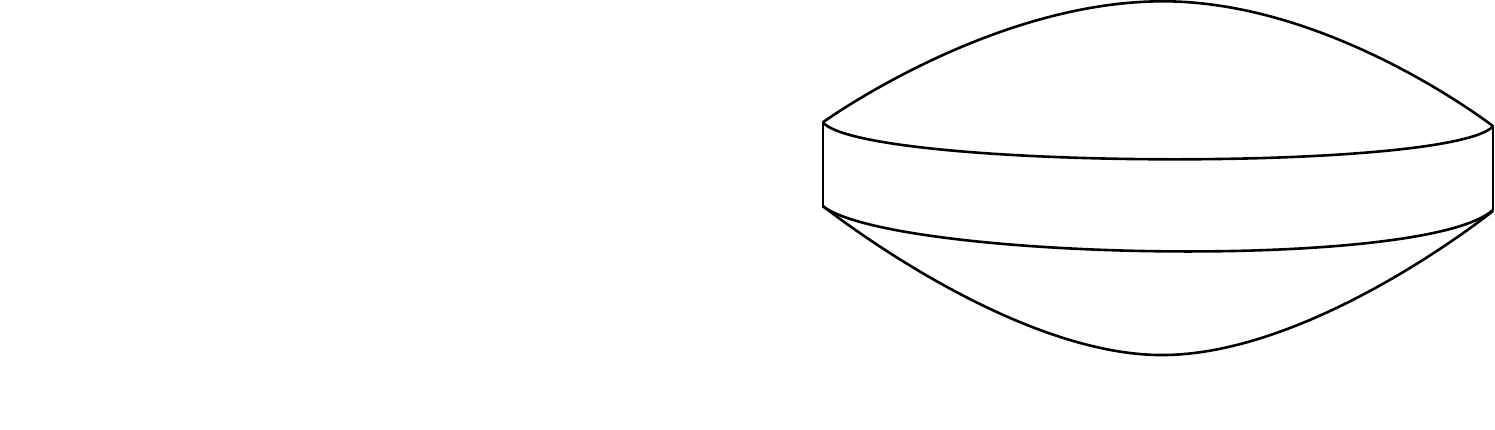
\caption{Claim \ref{claim-barrier}}\label{figure1}}
\end{figure}
\end{claim} 
 By the claim, $a\vec{v}+B^{n+1}_{c_n c  at}(0) \subset \hat M_t$ for all $0\le t \le c_n$ and $a \ge a_0$ some $a_0>0$. $\hat \Gamma_t$ has  interior since $\hat M_t$ contains a round cone.

\end{proof}

\end{prop}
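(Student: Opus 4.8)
The plan is to exploit the \emph{ultra-fast diffusion} character of the flow: a thin flat slab evolving by the IMCF instantly acquires positive thickness, and the hypothesis $\mathcal H^{n-1}(\hat\Gamma_0)>0$ together with $\hat\Gamma_0$ being ``flat'' forces $\hat M_0$ to contain arbitrarily large such slabs near infinity. Blowing down the fattened slabs then produces a genuine $(n+1)$-dimensional ball inside the tangent cone $\hat C_t$, which is exactly what ``$\hat\Gamma_t$ has interior'' means.

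\emph{Step 1: reduction to expanding slabs.} Since $\hat\Gamma_0\subset\mathbb{S}^n$ is convex with empty interior, $\mathcal C\hat\Gamma_0=\hat C_0$ is a convex cone with apex $0$ and empty interior in $\R^{n+1}$, hence lies in a hyperplane through the origin; after a rotation I may take this to be $\{x_{n+1}=0\}$, and then $\mathcal H^{n-1}(\hat\Gamma_0)>0$ forces $\mathcal C\hat\Gamma_0$ to have nonempty interior as a subset of $\R^n\times\{0\}$. After a further rotation it contains a round solid $n$-cone $\{(x',x_n,0):|x'|\le r x_n,\ x_n\ge0\}$, $x'\in\R^{n-1}$, and hence the $n$-balls $B^n_{ca}(ae_n)\times\{0\}$ for all $a>0$, where $c:=r/\sqrt{1+r^2}$. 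Translating so that $0\in\mathrm{int}\,\hat M_0$, choose $\rho>0$ with $B_\rho(0)\subset\hat M_0$; since $p+\hat C_0\subset\hat M_0$ for every $p\in\hat M_0$ (Definition \ref{defn-cone} at $\lambda=1$), I get $\hat M_0\supset B_\rho(0)+\mathcal C\hat\Gamma_0\supset B^n_{ca}(ae_n)\times(-\rho,\rho)$ for every $a>0$. In the original coordinates this is a rotated and translated slab $a\vec v+J\bigl(B^n_{ca}(0)\times(-\rho,\rho)\bigr)$ with $\vec v\in\hat\Gamma_0$ (the axis direction), and for $a>a_0:=100\rho/c$ these slabs are genuinely thin, $\rho<(ca)/100$.

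\emph{Step 2: pancake fattening (the main point).} I would establish the following: if $\hat D_{R,\e}:=B^n_R(0)\times(-\e,\e)\subset\hat N_0$ for a smooth IMCF $N_t=\p\hat N_t$ with $0<\e<R/100$, then $B^{n+1}_{c_nRt}(0)\subset\hat N_t$ for $t\in[0,c_n]$, with $c_n$ dimensional. Adapting the mollification of Lemma \ref{lem-conicalcomp} to the cone over $\p\hat D_{R,\e}\times\{1\}$, one obtains a \emph{smooth} convex pancake $\Sigma_{R,\e}\subset\hat D_{R,\e}$ that coincides with $\p\hat D_{R,\e}$ on $B_{R/2}(0)$ and retains the $O(n)$ rotational symmetry and the reflection symmetry across $\{x_{n+1}=0\}$. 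Running its (eternal, by \cite{Ge2,Ur}) IMCF $\Sigma_{R,\e}(t)$, symmetry gives extreme points $(0,\dots,0,\pm(\e+c(t)))$ with outer normal $\pm e_{n+1}$, and there $\dot c(t)=H^{-1}$. Since $\Sigma_{R,\e}$ is flat (so $H\equiv0$) inside $B_{R/4}(0)$, Proposition \ref{lem-DH} applied with radius $R/4$ (valid even where $M_0$ is empty) gives $H\le 4C_n/R$ on $\Sigma_{R,\e}(t)\cap B_{R/8}(0)$; hence while $\e+c(t)<R/8$, which because $\e<R/100$ persists on a dimensional time interval, $\dot c(t)\ge R/(4C_n)$, so $c(t)\gtrsim Rt$. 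As $\Sigma_{R,\e}(t)$ is convex, contains the two tips, and (the flow being expanding) still contains the flat disk $B^n_{R/4}(0)\times\{0\}$, its interior contains the bipyramid over that disk with those apices, hence a round ball about $0$ of radius $\gtrsim\min(R,c(t))\gtrsim Rt$. Finally, because $\e<\rho$ in the application we have $\hat\Sigma_{R,\e}(0)\subset\mathrm{int}\,\hat N_0$, so $\hat\Sigma_{R,\e}(0)$ lies inside the compact smooth approximants of $\hat N_0$ for large index, and the comparison principle for compact solutions gives $\hat\Sigma_{R,\e}(t)\subset\hat N_t$, proving the claim.

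\emph{Step 3: blow-down, and the main obstacle.} Applying Step 2 with $\hat N_t=\hat M_t$ to the slab of Step 1 (after translating by $-a\vec v$ and rotating back) gives $B^{n+1}_{c_n(ca)t}(a\vec v)\subset\hat M_t$ for $t\in[0,c_n]$ and all $a>a_0$. Since $\hat M_t$ is convex with $0$ in its interior, $\lambda\hat M_t$ is nondecreasing in $\lambda$, so $\hat C_t=\cap_{\lambda>0}\lambda\hat M_t=\cap_{0<\lambda\le 1/a_0}\lambda\hat M_t$; taking $\lambda=1/a$ with $a>a_0$, each of these sets contains $B^{n+1}_{c_nct}(\vec v)$, hence so does $\hat C_t$. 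Thus $\hat C_t$ has nonempty interior in $\R^{n+1}$, so $\hat\Gamma_t=\hat C_t\cap\mathbb{S}^n$ contains a spherical cap about $\vec v$ and has nonempty interior, for $t\in(0,\min(c_n,T))$; for larger $t$ one uses $\hat M_{c_n}\subset\hat M_t$ (expansion), whence $\hat C_{c_n}\subset\hat C_t$, to conclude for all $t\in(0,T)$. The only genuinely nontrivial ingredient is Step 2 --- constructing a smooth convex pancake barrier that is flat on a definite central ball and keeps the symmetries, and then upgrading the local bound of Proposition \ref{lem-DH} into the lower bound $c(t)\gtrsim Rt$ for the tip displacement \emph{uniformly in $R$}; the convex geometry in Steps 1 and 3 is elementary and the blow-down only uses monotonicity of the tangent cone under inclusion.
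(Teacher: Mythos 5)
Your proposal is correct and follows essentially the same route as the paper: reduce to expanding thin slabs inside $\hat M_0$ via the $(n-1)$-disk in the flat cone $\mathcal{C}\hat\Gamma_0$, prove the pancake-fattening claim by smoothing the slab, using symmetry and the upper bound on $H$ from Proposition~\ref{lem-DH} to get the tip speed $\dot c(t)\gtrsim R$, and then blow down the resulting balls $B^{n+1}_{c_n c a t}(a\vec v)$ to see that $\hat C_t$ contains a round cone. The only differences are cosmetic (you derive the slabs from $B_\rho(0)+\mathcal{C}\hat\Gamma_0\subset\hat M_0$ rather than via the cross-section at $\{x_{n+1}=1\}$, and you spell out the final blow-down step that the paper leaves implicit).
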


{\color{black} We now prove Theorem \ref{thm-noextension}. Note that the same proof works for both $T=0$ and $T\in(0,\infty)$. 
    \begin{proof}[Proof of Theorem \ref{thm-noextension}] 
    Let $\hat M_t$, for $t\ge0$, be the innermost candidate solution from $\hat M_0$ (see  Definition \ref{def-candidate}). Since $\hat M_t \subset \hat N_t$ (by Lemma \ref{lem-noncptavoid}), it suffices to show $\hat M_{T+\tau} = \mathbb{R}^{n+1}$ for $\tau>0$.    
%
%
%
%
%
%
 One useful observation is that if $\hat M_{t_0}$ contains a half space at $ t_0\ge 0$ then $\hat M_t=\mathbb{R}^{n+1}$ for $t>t_0$: suppose $\{x_{n+1}\ge 0\}\subset \hat M_{t_0}$. By the comparison with spherical solutions, $\p B_{re^{\tau'/n}} ( r e_{n+1})\subset \hat M_{t_0+\tau'}$. Since $ B_{r (e^{\tau'/n}-1)}(0) \subset B_{re^{\tau'/n}} ( r e_{n+1})\subset \hat  M_{t_0+\tau'}$ for all $r>0$, we get $\hat M_{t_0+\tau'}= \mathbb{R}^{n+1}$.    
    
We may assume $0\in \text{int}(\hat M_0)$. Suppose $\hat M_{t_0}$ contains a cone $\mathcal{C} \hat \Gamma'_0$ with a smooth strictly convex link $\Gamma'_0$ in an open hemisphere. Since the IMCF running from $\Gamma'_0$ converges to an equator as $t\to \ln |\mathbb{S}^{n-1}| -\ln |\Gamma'_0|$ (by \cite{Ge}\cite{makowski2013rigidity}), Lemma \ref{lem-conicalcomp} and the observation above imply that $\hat M_t =\mathbb{R}^{n+1}$ for $t> t_0 + \ln |\mathbb{S}^{n-1}| -\ln |\Gamma'_0|$. In view of the approximation in Lemma \ref{lem-outersphere}, the same assertion holds when $\hat \Gamma'_0$ is a convex set with interior and contained in an open hemisphere.

%
%
%

 In general, the link of initial tangent cone $\hat \Gamma_0$ is a convex set in a closed hemisphere. After a rotation, we may assume that $e_{n+1}\in \hat \Gamma_0$ and represent $\hat M_0$ using a convex function $f$ on a convex domain $\Omega_0 \subset \mathbb{R}^n$ by $\hat M_0= \{(x,x_{n+1}) \,:\,  x_{n+1}\ge  f (x), x\in \Omega_0 \}$.  Let us define $\hat M_{\e,0}:=  \{(x,x_{n+1}) \,:\,  x_{n+1}\ge  f (x)+ \e \sqrt{|x|^2+1}, x\in \Omega_0 \}$ and observe that  it is still convex with $C^{1,1}_{loc}$ boundary.  The links of the tangent cones of $\hat M_{\e,0}$, say $\hat \Gamma_{\e,0}$, are contained in a fixed open hemisphere and $\hat \Gamma_{\e,0}$ increases to $\hat \Gamma_0$ as $\e\to0^+$. $P(\hat \Gamma_{\e,0})$ is monotone in $\e$ by Lemma \ref{lem-genouter}. Let us assume the following claim for the  moment. 
   \begin{claim} $P(\hat \Gamma_{\e,0})$ increases to $P(\hat \Gamma_0)$ as $\e\to 0^+$. \end{claim}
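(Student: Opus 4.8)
The plan is to reduce the claim to a single inequality and then pin down the tangent cones explicitly. Since $\hat\Gamma_{\e,0}\subseteq\hat\Gamma_0$ for every $\e$ and $P$ is monotone under inclusion (Lemma~\ref{lem-genouter}), the monotone quantity $P(\hat\Gamma_{\e,0})$ is bounded above by $P(\hat\Gamma_0)$, so $L:=\lim_{\e\to0^+}P(\hat\Gamma_{\e,0})$ exists with $L\le P(\hat\Gamma_0)$; only $L\ge P(\hat\Gamma_0)$ requires proof.

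The first step is to identify the links of the tangent cones at infinity. Writing $f^\infty$ for the recession function of $f$ and using that the recession function of $\sqrt{|x|^2+1}$ is $|v|$, the tangent cone of $\hat M_{\e,0}$ at infinity is $\mathcal{C}\hat\Gamma_{\e,0}=\{(v,v_{n+1}):v_{n+1}\ge f^\infty(v)+\e|v|\}$, whereas $\hat C_0=\mathcal{C}\hat\Gamma_0$ is the epigraph of $f^\infty$. Three consequences I would extract: the cones $\mathcal{C}\hat\Gamma_{\e,0}$ increase as $\e\searrow0$; their union is dense in $\hat C_0$ (if $v_{n+1}>f^\infty(v)$ then $(v,v_{n+1})\in\mathcal{C}\hat\Gamma_{\e,0}$ for small $\e$, and $\hat C_0$ is the closure of this strict epigraph); and, because subtracting off the $\e|v|$ term together with the ray $\mathbb{R}_+e_{n+1}$ recovers every point of $\hat C_0$, all the $\mathcal{C}\hat\Gamma_{\e,0}$ span the same linear subspace as $\hat C_0$. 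Intersecting with $\mathbb{S}^n$: the sets $\hat\Gamma_{\e,0}$ all lie in, and are of full dimension in, the fixed great subsphere $\mathbb{S}^k\subset\mathbb{S}^n$ cut out by $\mathrm{span}(\hat C_0)$; they increase to $\hat\Gamma_0$ with $\overline{\bigcup_{\e>0}\hat\Gamma_{\e,0}}=\hat\Gamma_0$; and they all stay inside one fixed open hemisphere (as already recorded just before the claim). Since an increasing family of compact convex sets with dense union converges to the closure of that union in the Hausdorff metric, $\hat\Gamma_{\e,0}\to\hat\Gamma_0$ in Hausdorff distance.

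Next I would split into the only two possible cases. Since $T=T(M_0)<\infty$, \eqref{eq-T} forces $P(\hat\Gamma_0)>0$, hence $\mathcal{H}^{n-1}(\hat\Gamma_0)>0$, so $k\in\{n-1,n\}$. If $k=n$, every $\hat\Gamma_{\e,0}$ has non-empty interior in $\mathbb{S}^n$, so by \eqref{eq-perimeter} $P(\hat\Gamma_{\e,0})=\mathcal{H}^{n-1}(\partial\hat\Gamma_{\e,0})$ and $P(\hat\Gamma_0)=\mathcal{H}^{n-1}(\partial\hat\Gamma_0)$; passing to a gnomonic chart (where these become Euclidean convex bodies and $\mathcal{H}^{n-1}$ changes by a smooth Jacobian), the continuity of surface area of convex bodies under Hausdorff convergence yields $L=P(\hat\Gamma_0)$. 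If $k=n-1$, every $\hat\Gamma_{\e,0}$ has empty interior in $\mathbb{S}^n$, its boundary in $\mathbb{S}^n$ is itself, so $P(\hat\Gamma_{\e,0})=2\mathcal{H}^{n-1}(\hat\Gamma_{\e,0})$ and $P(\hat\Gamma_0)=2\mathcal{H}^{n-1}(\hat\Gamma_0)$; since $\hat\Gamma_{\e,0}\nearrow\hat\Gamma_0$ and $\hat\Gamma_0\setminus\bigcup_{\e>0}\hat\Gamma_{\e,0}$ lies in the relative boundary of the $(n-1)$-dimensional convex set $\hat\Gamma_0$, which is $\mathcal{H}^{n-1}$-null, monotone convergence of measures gives $\mathcal{H}^{n-1}(\hat\Gamma_{\e,0})\to\mathcal{H}^{n-1}(\hat\Gamma_0)$ and again $L=P(\hat\Gamma_0)$. (An alternative to the case $k=n$ that avoids quoting surface-area continuity: combine the monotonicity of $P$ with a smooth inner exhaustion of the relative interior of $\hat\Gamma_0$, built as in Claim~\ref{claim-approximation}/Lemma~\ref{lem-b5}, to get $L\ge P(\hat K_m)\to P(\hat\Gamma_0)$.)

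I expect the only genuinely non-routine step to be the identification of the recession cone of $\hat M_{\e,0}$ and the two facts drawn from it — density of $\bigcup_{\e>0}\mathcal{C}\hat\Gamma_{\e,0}$ in $\hat C_0$ and equality of linear spans. This is exactly where the specific perturbation $\e\sqrt{|x|^2+1}$ is used: its behaviour at infinity is the homogeneous term $\e|v|$, just enough to tilt the tangent cone into an open hemisphere without enlarging or shrinking its linear hull, so no dimension is lost or gained in the limit. Everything after that is monotonicity (Lemma~\ref{lem-genouter}) together with standard continuity of surface area and of volume of convex bodies under Hausdorff convergence, and the factor $2$ in \eqref{eq-perimeter} is precisely what keeps $P$ continuous across the borderline case $k=n-1$.
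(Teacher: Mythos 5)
Your overall scheme (monotonicity reduces everything to $L\ge P(\hat\Gamma_0)$; identify the cones as epigraphs of $f^\infty+\e|v|$; Hausdorff convergence; then a dimension split) is reasonable and genuinely different from the paper's, and your degenerate case $k=n-1$ via monotone convergence of $\mathcal{H}^{n-1}$ is fine. But the full-dimensional case $k=n$ has a real gap. The tool you invoke there — continuity of surface area under Hausdorff convergence, transported through a gnomonic chart (this is exactly Lemma \ref{lem-b2}) — requires the limit set to be a \emph{compact} convex body in the chart, i.e.\ $\hat\Gamma_0$ compactly contained in a single open hemisphere. In the situation the claim is actually needed for, $\hat\Gamma_0$ is only contained in a \emph{closed} hemisphere and may contain antipodal points: the entire point of the perturbation $\e\sqrt{|x|^2+1}$ in the proof of Theorem \ref{thm-noextension} is to handle links such as a closed hemisphere or a wedge \eqref{eq-wedge} (the $T=0$ case from \cite{CH}), or any $\hat C_0$ containing a line. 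For such $\hat\Gamma_0$ the sets $\varphi^{-1}(\hat\Gamma_{\e,0})$ escape to infinity as $\e\to 0$ (take $\hat C_0$ a half-space: $\hat\Gamma_{\e,0}=\{p\in\mathbb{S}^n:p_{n+1}\ge\e|p'|\}$ pulls back to the ball of radius $\e^{-1}$), the gnomonic Jacobian degenerates near the equator, and neither Lemma \ref{lem-b2} nor the standard Euclidean continuity theorem applies. Your parenthetical alternative has the same defect: an inner exhaustion with $P(\hat K_m)\to P(\hat\Gamma_0)$ is again an approximation statement that the paper's appendix only proves for sets inside an open hemisphere.

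This is precisely the difficulty the paper's proof is built to avoid: it exhibits $\hat\Gamma_{\e,0}=\psi_\e(\hat\Gamma_0)$ for an explicit self-homeomorphism $\psi_\e$ of all of $\mathbb{S}^n$ (shear by $\e|x|e_{n+1}$ followed by radial projection) and checks $(1-C\e)|V|\le|D\psi_\e(p)V|\le(1+C\e)|V|$ on tangent vectors, giving the two-sided bound $(1-C\e)^{n-1}|\Gamma_0|\le|\Gamma_{\e,0}|\le(1+C\e)^{n-1}|\Gamma_0|$ with no hemisphere restriction; since a bi-Lipschitz homeomorphism preserves emptiness of interior, the same factor $1$ or $2$ appears in $P$ on both sides and the claim follows. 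To repair your argument you would either need to prove perimeter continuity under Hausdorff convergence for convex sets in a closed hemisphere (which amounts to an analysis near the equator comparable in effort to the paper's computation), or simply adopt the bi-Lipschitz map; the rest of your write-up can stay as is.
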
Choose $\hat M_{\e',0}$ such that  $T':=\ln |\mathbb{S}^{n-1}| - P(\hat \Gamma_{\e',0})< T+\tau/2$ and note $T'>0$ by Lemma \ref{lem-genouter}. By Theorem \ref{thm-mainexistence}, there is a smooth strictly convex solution $M_{\e',t}$ for $0<t< T'$.  At $t_0:=\min(\tau/4,T'/2)$, the link of the tangent cone of $\hat M_{\e',t_0}$, say $\hat \Gamma_{\e',t_0}$, has interior by Proposition \ref{prop-interior}. $\hat \Gamma_{\e',t_0}$ is contained in an open hemisphere due to strict convexity of  $M_{\e',t_0}$.  By Lemma \ref{lem-genouter},  $T'':= \ln |\mathbb{S}^{n-1}| - \ln |\Gamma_{\e',t_0}|\le \ln |\mathbb{S}^{n-1}| - P(\hat \Gamma_{e',0})<T+\tau/2$. $\hat M_{\e,t_0} \subset \hat M_{t_0}$ by Lemma \ref{lem-noncptavoid}. Because $\mathcal{C} \hat \Gamma_{\e',t_0} \subset \hat M_{t_0}$, we apply the assertion in the second paragraph to conclude that $\hat M_t =\mathbb{R}^{n+1}$ for $t>t_0 + T''$.  Note that $t_0+T'' < T+3\tau/4$  finishing  the proof.
     
   \begin{proof}[Proof of Claim] Let us define a locally Lipschitz map \bee\psi_\e: \mathbb{R}^{n+1}- \{0\}\longrightarrow \mathbb{R}^{n+1} \quad \text{ by }\quad\psi_\e(x,x_{n+1}) = \frac{(x,x_{n+1} +\e |x|)}{|(x,x_{n+1}+\e |x|)|},  \eee
  and observe $\hat \Gamma_{\e,0} = \psi _\e (\hat \Gamma_0)$. $\psi_\e$ induces a bijection between $\{ x_{n+1} = \alpha |x|\}\cap \mathbb{S}^n$ and $\{ x_{n+1} = (\alpha+\e) |x|\}\cap \mathbb{S}^n$ for all $\alpha \in \mathbb{R}$, hence   $\psi_\e$ induces a bijection from and onto $\mathbb{S}^n$. Let us define $ \phi (x,x_{n+1}) := |x|e_{n+1}$ so that $ \psi_\e (p) = \frac{ p +\e \phi(p)}{|p+ \e\phi(p)|}$. At each differentiable point of $\psi_\e$, the differential of $\psi_\e$, $D \psi_\e$, represented by a $(n+1)\times(n+1)$ matrix is     
   
   \[D \psi_\e (p) = \frac{1}{|p+\e \phi |} ( I_{n+1} + \e D \phi) - \frac{p+\e \phi }{|p+\e\phi|^3 } (p+\e\phi)^{T} (I_{n+1}+\e D\phi). \]
  Using $|p|=1$, $|\phi|\le 1$ and $|D\phi|\le 1$ on $p\in \mathbb{S}^n$,  we may find positive $C$ and $\e_0$ such that for $0<\e \le \e_0$ at each  $p\in \mathbb{S}^n-\{\pm e_{n+1}\}$ and $V \in T_p \mathbb{S}^n = \{W \in \mathbb{R}^{n+1} \,:\, \la W, p \ra =0 \}$ $$(1-C\e)\vert V\vert\le  \vert (d\psi_\e)_p (V)\vert = \vert D\psi_\e(p) V \vert  \le (1+C\e) \vert V\vert  .$$ 
The shows that $(1-C\e)^{n-1} |\Gamma_{0}| \le |\Gamma_{\e,0}| \le (1+C\e)^{n-1} |\Gamma_{0}|$.

   \end{proof}

     \end{proof}
    }

In the remaining part of this section, we assume $0<T(M_0)<\infty$ and discuss the behavior of innermost solution $\hat M_t$ as $t\to T^-$. By Theorem \ref{thm-mainexistence}, $M_t$ is a smooth IMCF for $0<t<T$ which is innermost by Lemma \ref{lem-noncptavoid}. Observe $\hat \Gamma_t$, the link of the tangent cone of $ \hat M_t$, is a kind of weak solution to the IMCF on $\mathbb{S}^n$. 

\begin{lemma} \label{lem-compgamma}Suppose $\hat \Gamma'_{0}\subset  \hat \Gamma_{0}$ for some compact convex set $\hat \Gamma'_{0}$ with smooth strictly convex boundary and let $\Gamma_t'$ be the IMCF from $\Gamma_{0}'$. Then,  $\hat \Gamma_{t}' \subset \hat \Gamma_{t}$. Similarly, if $\hat \Gamma_{0}\subset  \hat \Gamma_0'$ for some $\hat \Gamma_0' \neq \mathbb{S}^n$ with smooth strictly convex boundary, then $\hat \Gamma_t \subset \hat \Gamma'_{t}$. The inequalities hold as long as $\Gamma_t'$ exists.

\begin{proof} For such a $\hat \Gamma'_0 \subset \hat \Gamma_0$, we may find a vector $v\in\mathbb{R}^{n+1}$ such  that $ \mathcal{C} \hat \Gamma'_0+v\subset \hat M_0$. Then Lemma \ref{lem-conicalcomp} implies that $ \mathcal{C} \hat \Gamma'_t+v\subset \hat M_t$,
hence   $\hat \Gamma'_t \subset \hat \Gamma_t$. 

\smallskip

In  the other case, if we first assume  strict inclusion  $\hat \Gamma_0 \subset \subset \hat \Gamma'_0$, then  the proof goes similarly except that we don't use Lemma \ref{lem-conicalcomp} and use the  usual comparison principle between the approximating compact solutions $M_{i,t}$ and the conical barrier outside (and then take $i\to\infty$).   For general $\hat \Gamma_0 \subset \hat \Gamma'_0$, consider a strictly  decreasing approximating sequence  $\{ \hat \Gamma'_{i,0} \}$ of $\hat \Gamma'_0$, obtained from Lemma \ref{lem-outersphere}, and apply the result which assumes strict  inclusion. This shows that $\hat \Gamma_t \subset \cap_{i} \hat \Gamma'_{i,t}$ and $\Gamma'_t \subset  \cap_{i} \hat \Gamma_{i,t}'$. In view of Lemma \ref{lem-outersphere}, we have the following equalities
$$|\p(\cap_{i} \hat\Gamma_{i,t}')| = \lim_{i \to \infty} |\Gamma_{i,t}'| =\lim_{i\to\infty} e^t |\Gamma_{i,0}'|= e^t |\Gamma_0'| = |\Gamma_t'|$$ and the strict outer area minimizing property (the last assertion in Lemma \ref{lem-outersphere}) implies $\hat \Gamma_t' =  \cap_{i} \hat \Gamma_{i,t}'$, and this shows $\hat\Gamma_t \subset \hat\Gamma_t'$.

\end{proof}

\end{lemma}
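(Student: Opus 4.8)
The plan is to reduce the claimed inclusions of the links $\hat\Gamma_t\subset\mathbb{S}^n$ to inclusions of convex bodies in $\R^{n+1}$, and then invoke the two barrier tools already in hand: the conical comparison from inside (Lemma~\ref{lem-conicalcomp}) and the comparison principle against a conical supersolution (as used for the barrier $\mathcal{C}\hat\Gamma^j_t+v'_j$ in the proof of Theorem~\ref{thm-mainexistence}), recalling that $\hat M_t$ here is the innermost solution constructed there. The bridge between the two pictures is elementary convex geometry: for closed convex sets $\hat A\subset\hat B$ one has $\mathrm{rec}(\hat A)\subset\mathrm{rec}(\hat B)$ for the recession cones, the recession cone of a translated cone $\mathcal{C}\hat\Gamma+v$ is $\mathcal{C}\hat\Gamma$, and by Definition~\ref{defn-cone} the recession cone of $\hat M_t$ is exactly $\mathcal{C}\hat\Gamma_t$. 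Thus $\hat\Gamma_t\subset\hat\Gamma'_t$ in $\mathbb{S}^n$ is equivalent to $\mathcal{C}\hat\Gamma_t\subset\mathcal{C}\hat\Gamma'_t$ in $\R^{n+1}$, and inclusions of this type will be produced by placing cones inside or outside $\hat M_t$.

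For the case $\hat\Gamma'_0\subset\hat\Gamma_0$: since $\hat\Gamma'_0$ has non-empty interior so does $\hat\Gamma_0$, and $\mathcal{C}\hat\Gamma'_0\subset\mathcal{C}\hat\Gamma_0=\mathrm{rec}(\hat M_0)$, so for any $v\in\hat M_0$ we get $v+\mathcal{C}\hat\Gamma'_0\subset\hat M_0$, i.e. $\mathcal{C}\hat\Gamma'_0\subset\hat M_0-v$. Because the innermost-candidate construction commutes with translations (and is independent of the approximating sequence, Remark~\ref{remark-candidate}), $\hat M_t-v$ is the innermost candidate from $\hat M_0-v$, so Lemma~\ref{lem-conicalcomp} gives $\mathcal{C}\hat\Gamma'_t\subset\hat M_t-v$, hence $v+\mathcal{C}\hat\Gamma'_t\subset\hat M_t$, for $t\in[0,\ln|\mathbb{S}^{n-1}|-\ln|\Gamma'_0|)$. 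Passing to recession cones and intersecting with $\mathbb{S}^n$ yields $\hat\Gamma'_t\subset\hat\Gamma_t$ on that interval, which is precisely the existence interval of $\Gamma'_t$.

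For the case $\hat\Gamma_0\subset\hat\Gamma'_0$ with $\hat\Gamma'_0\neq\mathbb{S}^n$ smooth and strictly convex, first assume the strict inclusion $\hat\Gamma_0\subset\subset\hat\Gamma'_0$. Then $\mathcal{C}\hat\Gamma_0\setminus\{0\}$ lies in the open interior of $\mathcal{C}\hat\Gamma'_0$, and the standard ``move the vertex far back'' argument (as in the proof of Theorem~\ref{thm-mainexistence}) gives $v\in\R^{n+1}$ with $\hat M_0\subset v+\mathcal{C}\hat\Gamma'_0$. Now $\mathcal{C}\hat\Gamma'_t+v$ is an IMCF which is smooth away from the vertex $v$, and while $\Gamma'_t$ exists the cone $\mathcal{C}\hat\Gamma'_t$ is not a half-space, so $v$ cannot be touched from inside by a compact $C^2$ convex hypersurface; the comparison principle (exactly as for the conical barrier in the proof of Theorem~\ref{thm-mainexistence}) therefore applies to the compact approximations $M_{i,t}$ and gives $\hat M_{i,t}\subset v+\mathcal{C}\hat\Gamma'_t$, and letting $i\to\infty$, $\hat M_t\subset v+\mathcal{C}\hat\Gamma'_t$; taking recession cones, $\hat\Gamma_t\subset\hat\Gamma'_t$. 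For a general inclusion $\hat\Gamma_0\subset\hat\Gamma'_0$ one uses Lemma~\ref{lem-outersphere} to choose smooth strictly convex $\hat\Gamma'_{i,0}\searrow\hat\Gamma'_0$ with $\hat\Gamma_0\subset\subset\hat\Gamma'_{i,0}$ and $|\Gamma'_{i,0}|\to|\Gamma'_0|$; the strict case gives $\hat\Gamma_t\subset\hat\Gamma'_{i,t}$, hence $\hat\Gamma_t\subset\cap_i\hat\Gamma'_{i,t}$, while the spherical comparison gives $\hat\Gamma'_t\subset\hat\Gamma'_{i,t}$ and exponential area growth ((2) of Lemma~\ref{lem-HI1}, valid on $\mathbb{S}^n$) gives $|\partial(\cap_i\hat\Gamma'_{i,t})|=\lim_i|\Gamma'_{i,t}|=e^t|\Gamma'_0|=|\Gamma'_t|$; the strict outer-area-minimizing property in Lemma~\ref{lem-outersphere} then forces $\cap_i\hat\Gamma'_{i,t}=\hat\Gamma'_t$, so $\hat\Gamma_t\subset\hat\Gamma'_t$.

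The main obstacle I expect is the outside case: one must push the conical comparison principle through the vertex singularity (the vertex of a non-flat convex cone cannot be touched from inside by a compact smooth convex surface) and then, for the non-strict inclusion, identify $\cap_i\hat\Gamma'_{i,t}$ with $\hat\Gamma'_t$ via the outer-area-minimizing property rather than a direct Hausdorff limit; by contrast the inside case is routine once one notices that $\mathcal{C}\hat\Gamma'_0$ already sits inside the recession cone of $\hat M_0$, so no strictness hypothesis is needed there.
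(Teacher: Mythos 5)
Your proposal is correct and follows essentially the same route as the paper: the inside case via Lemma \ref{lem-conicalcomp} after translating the cone into $\hat M_0$, and the outside case via the conical barrier comparison for the strict inclusion followed by the Lemma \ref{lem-outersphere} approximation, the exponential area growth, and the strict outer-area-minimizing property to identify $\cap_i\hat\Gamma'_{i,t}$ with $\hat\Gamma'_t$. The only difference is that you make explicit the recession-cone bookkeeping and the vertex-touching issue, which the paper leaves implicit.
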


\begin{remark}[Asymptotic behavior of $M_t$, as $t \to T^-$] \label{remark-blowdown}In the case when $\Gamma_0$ admits a smooth strictly convex IMCF, the comparison {\color{black}in Lemma \ref{lem-compgamma}} implies that $\Gamma_t$ 
must  be this solution and, by the result of \cite{makowski2013rigidity} or \cite{Ge}, $\Gamma_t$ converges to an equator in $C^{1,\alpha}$,  as $t \to  T^-= \ln| \mathbb{S}^{n-1}|- \ln P(\hat \Gamma_0)$. In other words,  the solution $M_t$ becomes flat, as $t\to T^{-}$, in the sense that the tangent cone $\mathcal{C} \Gamma_t$ converges to a hyperplane in $C^{1,\alpha}_{loc}$. Next, since $\cup_{t<T} \mathcal{C}\hat\Gamma_t\subset \mathcal{C}\hat \Gamma_T \subset \hat M_T+v $,  for some $v\in\mathbb{R}^{n+1}$, $\hat M_T+v$ must  contain the closure of $\cup_{t<T} \mathcal{C}\hat\Gamma_t$, a closed half space. Thus $\hat M_T$ is either a closed half space or  $\mathbb{R}^{n+1}$. In the first case,  $M_t$ converges to a hyperplane as $t\to T^-$,  and in the second case, $M_t$ disappears to infinity as $t\to T^-$. Furthermore,  in the first case  the convergence to the hyperplane is in $C^{1,\alpha}_{loc}$ as we have a locally uniform $C^2$ bound (see Proposition \ref{lem-DH}). {\color{black}Note this result resembles $C^{1,\alpha}$ convergence of flow with boundary to a flat disk shown in \cite{LS}.} One additional condition on $\hat M_0$ which {\color{black}leads to the first case} is to {\color{black}assume}   there is  $v\in \mathbb{R}^{n+1}$ such that $\hat M_0 \subset \mathcal{C}\hat \Gamma_0 + v$. Then $\mathcal{C}\hat \Gamma_t +v$ becomes a barrier which contains $\hat M_t$ and thus $\hat M_T$ has to be a half space. 

In general $\Gamma_0$ may not admit  a smooth IMCF solution. This gave the  motivation for further research by the first author and Pei-Ken Hung \cite{CH} and one of the results in \cite{CH} shows the following: there is $t_0< T:=\ln |\mathbb{S}^{n-1}| - \ln P(\hat \Gamma_0)$ and $0\le k \le n-2$ such that $\mathcal{C}\Gamma_t = \mathbb{R}^k \times \mathcal{C}\Gamma'_t$ and   $\Gamma'_t$ is a IMCF in $\mathbb{S}^{n-k}$ which becomes smooth {strictly convex} for $t\in(t_0,T)$.  Therefore, in this general case one obtains  the same asymptotic behavior as in the case of previous paragraph. 

If we do not assume the result of \cite{CH}, we {\color{black}still have some partial results} on the asymptotic behavior of the solution.  $\{ \Gamma_t \}_{\in(0,T)} $ is a monotone family of convex hypersurfaces which are all contained in  some hemisphere and that  $|\Gamma_t|=e^t P(\hat \Gamma_0)$. Therefore, the closure of $\cup_{t<T} \hat \Gamma_t$ is a convex set in a (closed) hemisphere whose outer area is the same as the area of an equator $|\mathbb{S}^{n-1}|$. Such a convex set is either a  hemisphere or a  wedge discussed in \eqref{eq-wedge}. Since $\cup_{t<T} \hat \Gamma_t$ is contained in $\hat \Gamma_T$, $\mathcal{C}\hat \Gamma_T$ is either a  wedge, a half space, or $\mathbb{R}^{n+1}$. In the first case (although there is no such case if the result of \cite{CH} is assumed), $M_T= \Sigma \times \mathbb{R}^{n-1}$ for some non-compact $C^{1,1}_{loc}$ convex curve $\Sigma=\p \hat \Sigma$ in $\mathbb{R}^2$ and $M_t$ converges, as $t \to T^-$,  to $\Sigma \times \mathbb{R}^{n-1}$  in $C^{1,\alpha}_{loc}$. The cases when $\mathcal{C} \hat \Gamma_T$ is a half space or $\mathbb{R}^{n+1}$ were described   the above.

\end{remark}

    \begin{remark}[The connection with {\em ultra-fast diffusion}  on $\R^n$]\label{remark-cauchy}
In \cite{DP94, DP97}, the second author and M. del Pino studied the Cauchy problem of ultra-fast diffusion equations 
$u_t = \nabla \cdot(u^{m-1} \nabla u)$ on $\R^{n}$ for $m{\color{black}\le}-1$. In an attempt to find the fastest possible decay of initial data $u_0$ which guarantees a solvability of the equation on $t\in(0,T)$, some partial necessary or sufficient conditions had been found. As pointed earlier, the evolution of $H$ in the IMCF is similar to the ultra-fast diffusion equation of $m=-1$ and it shares similar features. Let us first summarize some of results when $m=-1$ from \cite{DP94,DP97}.
First, there is $C(n)$ so that if the Cauchy problem $u_t= \nabla \cdot (u^{-2} \nabla u)$ with $u(x,0)=u_0(x)\ge0$ has a solution for $t\in(0,T)$, then \[\limsup_{R\to \infty}\fr{1}{R^{n-1}} \int_{B_R}u_0dx \ge C\, T^{1/2}.\] There exist, however,  some $u_0(x)\ge0$ such that    \[\lim_{R\to\infty}\fr{1}{R^{n-1}} \int_{B_R} u_0 =C>0\] but  for which  {\em no solution exist}  with initial data $u_0$,  for any $T>0$. Such solutions are characterized by {\em a non-radial structure at spatial infinity}. Indeed, for initial data which is  bounded from below near infinity by {\em positive radial functions}  there is a necessary and sufficient for existence as follows:
 there is an explicit constant $E^*>0$ such that if the problem has a solution for $t\in(0,T)$, then \[\limsup_{R\to\infty} \left[{\fr1R} \left( \int_0^R \fr{ds}{w_ns^{n-1}} \int_{B_s} u_0\,  dx\right)\right]\ge E^*T^{1/2}.\] Moreover,  if $u_0$ is radially symmetric and locally bounded, \[\liminf_{R\to\infty} \left[{\fr1R} \left( \int_0^R \fr{ds}{w_ns^{n-1}} \int_{B_s} u_0 \, dx\right)\right]\ge E^*T^{1/2}\] guarantees an existence of a solution on $\R^n\times (0,T)$. For non-radial $u_0$, there is a similar condition in Theorem 1.3. \cite{DP97}.  Every result mentioned here is in some sense sharp when  explicit solutions $$v^T(x,t) = \fr{\sqrt{2(n-1) (T-t)_+}}{|x|} $$ are considered. These results explain partial conditions for non-existence and existence of solutions, but a complete description was missing. For the convex IMCF, however, Theorem \ref{thm-mainexistence} and \ref{thm-noextension} depict a fairly complete picture. This was possible by the geometric estimate Theorem \ref{thm-mainest3}. Note that this lower bound has the same decay of $v^T(x,t)$ above.  Instead of the integral operators used in \cite{DP97}, the asymptotic geometry of $M_0$ is used to provide the lower bound on $H$ in Theorem \ref{thm-mainest3}. It would be interesting  to see if a similar idea could be implemented in the theory of ultra-fast diffusion equation \eqref{eq-ultra}, with $m <0$.   
 
    \end{remark}
  

\appendix 
\section{Appendix}

\subsection{Strict convexity of solutions in space forms}\label{sec-strict}
Throughout this subsection, {\color{black}unless otherwise stated, we assume the solutions are smooth immersed n-dimensional possibly incomplete submanifods} in $(N^{n+1},\bar g)$ which is a {\em space form}  of sectional curvature $K\in \R$. This ambient space, in particular, includes Euclidean space, the sphere, or hyperbolic space. {\color{black}Since smooth solutions are strictly mean convex ($H>0$), this necessarily implies that the solutions are orientable.}  As before, we denote the outward unit normal which is opposite to the mean curvature vector by $\nu$, the norm of mean curvature by $H$, and the second fundamental form with respect to $-\nu$ by $h_{ij}$.  {\color{black}Moreover, we say a solution is convex if $h_{ij}$ is nonnegative definite everywhere. Note that the convex solution in this subsection is weaker notion than the convex solution in other sections which uses Definition \ref{def-convexity}. For instance, a $C^2$ hypersurface convex in the sense of Definition \ref{def-convexity} should necessarily be complete and embedded. }

Our aim is to prove Theorem \ref{thm-strict}, a strong minimum principle on $\lambda_1$.  However by looking at the evolution of the second fundamental form $h_{ij}$ given in  \eqref{eq-noneuch_ij}, 
it is not clear that  the convexity is preserved. To do so we need to use a  viscosity solution argument  and we  need the following lemma shown from  \cite{BCD}.  
\begin{lemma}[Lemma 5 in Section 4 \cite{BCD}]\label{lemma viscos}
Suppose that $\phi $ is a smooth function such that $\lambda_1 \ge \phi$ everywhere and $\lambda_1=\phi$ at $x=\bar p \in \Omega$. Let us choose an orthonormal frame so that \[h_{ij}=\lambda_i \delta_{ij}\text{ at }\bar p\in\Omega\quad\text{ with }\lambda_1=\lambda_2=\ldots=\lambda_\mu<\lambda_{\mu+1} \le \ldots \le \lambda_n.\] We denote $\mu\ge1$ by the multiplicity of $\lambda_1$.  Then at $\bar p$,    $\nabla_i h_{kl} =  \delta_{kl}\nabla_i\phi $ for $1\le k,l\le \mu$.  Moreover, $$\nabla_i \nabla_i \phi \le \nabla_i\nabla_i h_{11} - 2 \sum_{j>\mu} (\lambda_j-\lambda_1)^{-1}(\nabla_i h_{1j})^2 .$$
\end{lemma}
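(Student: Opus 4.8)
The plan is to treat this as a pointwise statement about the first‑order perturbation theory of the smallest eigenvalue of the symmetric tensor $h_{ij}$, realized through a judicious choice of a test vector field, combined with the second derivative test. Fix $\bar p$ and choose a smooth local orthonormal frame $\{E_i\}$ of $TM^n$ near $\bar p$ by parallel transporting an $h_{\bar p}$‑eigenbasis $\{e_i\}$ along the radial geodesics emanating from $\bar p$; then $E_i(\bar p)=e_i$, $\nabla E_i(\bar p)=0$, $\la E_i,E_j\ra\equiv\delta_{ij}$ near $\bar p$, and $h_{ij}(\bar p)=\lambda_i\delta_{ij}$. For a fixed unit vector $v=\sum_{k\le\mu}c_kE_k$ spanned by the first $\mu$ frame fields, the smooth function $x\mapsto h_x(v(x),v(x))$ dominates $\lambda_1(x)=\min_{|w|=1}h_x(w,w)$ in a neighborhood of $\bar p$ and equals $\lambda_1(\bar p)=\phi(\bar p)$, so $\phi\le h(v,v)$ near $\bar p$ with equality at $\bar p$. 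Matching first derivatives at $\bar p$ for the choices $v=E_k$ and $v=(E_k\pm E_l)/\sqrt2$ with $k,l\le\mu$, and using $h_{kl}(\bar p)=\lambda_1\delta_{kl}$, yields $\nabla_ih_{kk}=\nabla_i\phi$ and $\nabla_ih_{kl}=0$ for $k\ne l$, i.e. $\nabla_ih_{kl}=\delta_{kl}\nabla_i\phi$ for $1\le k,l\le\mu$.

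For the Hessian bound I would introduce the $x$‑dependent competitor
\[
V(x)=E_1(x)+\sum_{j>\mu}b_j(x)\,E_j(x),
\]
where the $b_j$ are smooth with $b_j(\bar p)=0$ and with first derivatives $\nabla_ib_j(\bar p)=\beta_{ij}$ left as free parameters (the second derivatives of the $b_j$ will turn out to be irrelevant). Then $V/|V|$ is a well‑defined unit vector field near $\bar p$, hence $\lambda_1(x)\le G(x):=h_x(V,V)/|V|^2$ near $\bar p$, with $G(\bar p)=\lambda_1(\bar p)=\phi(\bar p)$. Consequently $\bar p$ is an interior minimum of the smooth function $G-\phi$, so $\nabla_i\nabla_i\phi(\bar p)\le\nabla_i\nabla_iG(\bar p)$ for every $i$ (and hence also after summing over $i$).

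The last step is a direct computation of $\nabla_i\nabla_iG(\bar p)$. Writing $q:=|V|^2=1+\sum_{j>\mu}b_j^2$ and using $\nabla E_k(\bar p)=0$, $b_j(\bar p)=0$, $\nabla_iq(\bar p)=0$, the diagonality of $h$ at $\bar p$, and the identity $\la\nabla_i\nabla_iE_1,E_1\ra=0$ at $\bar p$ (which follows from differentiating $|E_1|^2\equiv1$ twice and $\nabla E_1(\bar p)=0$), one obtains
\[
\nabla_i\nabla_iG(\bar p)=\nabla_i\nabla_ih_{11}+\sum_{j>\mu}\Bigl(4\beta_{ij}\nabla_ih_{1j}+2(\lambda_j-\lambda_1)\beta_{ij}^2\Bigr).
\]
Since $\lambda_j-\lambda_1>0$ for $j>\mu$, each summand is a convex quadratic in $\beta_{ij}$, minimized at $\beta_{ij}=-\nabla_ih_{1j}/(\lambda_j-\lambda_1)$ with value $-2(\lambda_j-\lambda_1)^{-1}(\nabla_ih_{1j})^2$. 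Choosing the $b_j$ to realize these first derivatives and combining with the Hessian comparison gives $\nabla_i\nabla_i\phi\le\nabla_i\nabla_ih_{11}-2\sum_{j>\mu}(\lambda_j-\lambda_1)^{-1}(\nabla_ih_{1j})^2$ at $\bar p$, as claimed.

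The calculation itself is routine; the genuinely delicate points are: (i) arranging the extension of the competitor vector field so that $\lambda_1(x)\le G(x)$ holds as an honest pointwise inequality on a full neighborhood (not merely to second order) — this is exactly why we use a true local orthonormal frame and normalize by $|V|^2$; (ii) the cancellation of the $\la\nabla_i\nabla_iE_1,E_1\ra$ term, which relies on orthonormality of the frame together with $\nabla E_1(\bar p)=0$; and (iii) the convexity of the optimized quadratic, which is precisely where the strict spectral gap $\lambda_1=\dots=\lambda_\mu<\lambda_{\mu+1}$ is used. No use is made of the flow: the statement is pure pointwise linear algebra together with the second derivative test for the support function $\phi$.
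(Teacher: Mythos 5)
Your proof is correct; note that the paper does not actually prove this lemma but quotes it from \cite{BCD} (Lemma 5, Section 4), and your argument is essentially the one used there: first-derivative matching of $h(v,v)-\phi$ over the $\lambda_1$-eigenspace, followed by the second-derivative test applied to the Rayleigh quotient of the perturbed eigenvector field $E_1+\sum_{j>\mu}b_jE_j$, with the gradients $\beta_{ij}$ of the coefficients optimized using the spectral gap $\lambda_j-\lambda_1>0$. The details you flag all check out, in particular the cancellation $\langle\nabla_i\nabla_iE_1,E_1\rangle=0$ at $\bar p$ combined with the diagonality of $h$ there, the irrelevance of the second derivatives of the $b_j$ since $h(e_j,e_1)=0$ for $j>\mu$, and the fact that a single smooth choice of the $b_j$ realizes the optimal $\beta_{ij}$ for all $i$ simultaneously, so the inequality holds for each fixed $i$ (and hence after summation, as used in Proposition \ref{prop-visc}).
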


\begin{prop}\label{prop-visc} For $n\ge1$, let  $ F:\Omega \times(0,T)\rightarrow (N^{n+1},\bar g) $ be a smooth convex solution to the IMCF where $(N,\bar g)$ is a space form. Let  $\lambda_1$ 
denote the lowest eigenvalue of $h^i_{j}$. Then ${\ds u:= {\lambda_1}/{H}}$ is a viscosity supersolution to the  equation 
\be \label{eq-uvisc} \frac{\partial}{\partial t} u - \frac{1}{H^2} \Delta u + \frac{1}{H^3}\langle V , \nabla u \rangle +\left( \frac{W}{H^4}\right) u\ge 0
\ee where $V$ is a vector field, and $W$ is a scalar function such that $$|W|, |V| \le C(|\nabla H|,n)\quad\text { at each point.}$$   

\begin{proof} Using  equation \eqref{eq-noneuch_ij} in  Remark \ref{remark-diffambient}, we compute the evolution of  $h^i_{j}/H$:
\be\ba\label{eqn-hijH}
 \PP\fr{h^i_{j}}{H}&= 2\frac{|A|^2}{H^2}\frac{ h^i_{j}}{H}-2\frac{h^{ik}h_{kj}}{H^{2}} +\frac{2}{H^{4}}(\nabla_m H\nabla^mh^i_{j} -\nabla^{i} H \nabla_{j} H ) .\\
\ea\ee
Suppose a smooth function of space time, namely $\phi/H$, touches $\lambda_1/H$ from below at $(\bar p,\bar t)$.  At time $\bar t$ around $\bar p$, let us fix a time independent frame $\{ e_i\}$ using the metric $g({\color{black}\bar t})$ as in Lemma \ref{lemma viscos}.

Since ${\phi} \le {\lambda_1} \le {h^1_1}$ and they coincide at $(\bar p, \bar t)$, $\partial_t \phi \ge \partial_t h^1_1$ at $(\bar p, \bar t)$. 
At this point $(\bar p, \bar t)$ with the frame $\{e_i\}$, we use Lemma \ref{lemma viscos},  equation \eqref{eqn-hijH},  and the Codazzi   identity  $\nabla_{i}h_{jk} = \nabla_{j}h_{ik}$ to obtain\be\ba \label{eq-localconv1}
\square \frac{\phi}{H} &\ge \frac{\partial}{\partial t} \frac{h^1_1}{H} -\frac{1}{H^2} \Delta \frac{h_1^1}{H}+ \frac{2}{H^{3}}\sum_i\sum_{ j>\mu}(\lambda_j-\lambda_1)^{-1} |\nabla_i h_{1j}|^2 \\ &= \square\frac{ h^1_1 }{H}+ \frac{2}{H^{3}}\sum_{i\ge1,j>\mu} (\lambda_j-\lambda_1)^{-1} |\nabla_1 h_{ij}|^2 \\
&=  \frac{2\sum_j \lambda_j^2 - 2\lambda_1 \sum_j \lambda_j}{H^2}\frac{\lambda_1}{H}  + \frac{2}{H^{4}}\left[\nabla_m H\nabla_mh_{11} -|\nabla_{1} H|^2+H\sum_{ i\ge1,j>\mu}\frac{ |\nabla_1 h_{ij}|^2}{ \lambda_j-\lambda_1} \right]\\ 
&\ge \frac{2}{H^{4}}\left[\nabla_m H\nabla_m\phi -|\nabla_{1} H |^2+H\sum_{i\ge1,j>\mu} \frac{ |\nabla_1 h_{ij}|^2}{\lambda_j-\lambda_1} \right]. 
\ea\ee
The last inequality uses $\lambda_1 \sum_j \lambda_j\le \sum_j \lambda_j^2 =|A|^2$, {\color{black}which holds on convex (or more generally on mean convex) hypersurfaces.} 

Next, note that
	\be\label{eq-11}\ba
	\nabla_1H\nabla_1H=\sum_{i,j}\nabla_1h_{ii} \nabla_1 h_{jj} &=2\mu\nabla_1H \nabla_1\phi-\mu^2|\nabla_1\phi|^2 +	\sum_{ i>\mu,j>\mu}\nabla_1h_{ii}\nabla_1h_{jj}
	.\ea\ee 
Since ${\ds H \, \nabla \frac{\phi}{H}=  \nabla \phi-  \frac{\phi}{H} \, \nabla H}$, we have the following for each fixed unit direction $e_m$	\be\label{eq-m}\ba
	\nabla_m H \nabla_m \phi=H \nabla_m H \nabla_m \frac{\phi}{H} + \frac{\phi}{H}|\nabla_mH|^2.
	\ea\ee
	We first plug \eqref{eq-m} with $m=1$ into \eqref{eq-11} and then plug that into the last line of  \eqref{eq-localconv1} to obtain 
	\be\label{eq-phi/H}\ba
	\square \frac{\phi}{H} &\ge \frac{2}{H^4}\sum_{m>1}{ |\nabla_mH|^2}\frac{\phi}{H}	+\frac{2(1-2\mu)}{H^4}{ |\nabla_1H|^2} \frac{\phi}{H}
	\\ &\quad + \frac{2}{H^3} \sum_{m>1}{\nabla_mH\nabla_m\frac{\phi}{H}} + \frac{2(1-\mu)}{H^3} \nabla_1H \nabla_1 \frac{\phi}{H}+ \frac{2\mu^2}{H^3} \frac{|\nabla_1\phi|^2}{H}
	\\&\quad +\frac{2}{H^{4}}\left[ H\sum_{ i\ge1,j>\mu}(\lambda_j-\lambda_1)^{-1} |\nabla_1h_{ij}|^2- \sum_{ i>\mu,j>\mu}\nabla_1h_{ii} \nabla_1h_{jj}\right].\ea\ee
	 We now use   the convexity,  $\lambda_1\ge0$, in the proof of the following claim.
\begin{claim}\label{claim-strict}$\left[ H\sum_{ i\ge1,j>\mu}(\lambda_j-\lambda_1)^{-1} |\nabla_1h_{ij}|^2- \sum_{ i>\mu,j>\mu}\nabla_1h_{ii} \nabla_1h_{jj}\right]\ge0$ on $\{\lambda_1\ge0\}$.
\end{claim} Assuming that  the claim is true,  then by  taking  away the  good term ${\ds \frac{2\mu^2}{H^3} \frac{|\nabla_1\phi|^2}{H}}$ in \eqref{eq-phi/H}, 
we easily conclude that  \eqref{eq-uvisc} holds by choosing  a vector filed $V$ and a scalar function $W$ as a function of $\nabla H$ accordingly. Thus it remains to show the claim. 
\begin{proof}[Proof of Claim \ref{claim-strict}] Since $\lambda_1\ge0$, $H=\sum_{l\ge1} \lambda_l \ge \sum_{l>\mu} \lambda_l$, the claim follows by:
	\be\ba
	H\sum_{i\ge1,j>\mu}(\lambda_j-\lambda_1)^{-1} |\nabla_1h_{ij}|^2&\ge \sum_{l>\mu}\lambda_l \sum_{i>\mu}\lambda_i^{-1}|\nabla_1 h_{ii}|^2 = \sum_{i>\mu,j>\mu}\lambda_j \lambda_i^{-1}|\nabla_1 h_{ii}|^2 \\ & = \sum_{i>\mu,j>\mu}\frac{\lambda_j \lambda_i^{-1}|\nabla_1 h_{ii}|^2 +\lambda_i \lambda_j^{-1} |\nabla_1 h_{jj}|^2}{2} \\ & \ge  \sum_{i>\mu,j>\mu} \nabla_1 h_{ii} \nabla_1 h_{jj}.
\ea\ee
\end{proof}
\end{proof}
\end{prop}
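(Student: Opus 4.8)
The plan is to establish this elementary inequality by invoking the convexity hypothesis $\lambda_1\ge 0$ twice and then performing an AM--GM rearrangement. First recall the structure of the orthonormal frame chosen above: since $\mu$ is the multiplicity of $\lambda_1$ one has $0\le\lambda_1<\lambda_j$ for every $j>\mu$, so each such $\lambda_j$ is strictly positive, each weight $(\lambda_j-\lambda_1)^{-1}$ is finite and positive, and consequently every summand in $H\sum_{i\ge1,\,j>\mu}(\lambda_j-\lambda_1)^{-1}|\nabla_1 h_{ij}|^2$ is nonnegative. This nonnegativity is what will make the first step below legitimate.

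The first step is to bound that double sum from below by retaining only the diagonal terms with $i=j>\mu$ (discarding the nonnegative terms with $i\le\mu$ and the nonnegative off-diagonal ones), then replacing $(\lambda_i-\lambda_1)^{-1}$ by the smaller quantity $\lambda_i^{-1}$ (legitimate because $\lambda_1\ge0$), and finally using $H=\sum_{l\ge1}\lambda_l\ge\sum_{l>\mu}\lambda_l$ (again because $\lambda_l\ge0$ for $l\le\mu$). This yields
\[
H\sum_{i\ge1,\,j>\mu}(\lambda_j-\lambda_1)^{-1}|\nabla_1 h_{ij}|^2
\;\ge\;\Bigl(\sum_{l>\mu}\lambda_l\Bigr)\sum_{i>\mu}\lambda_i^{-1}|\nabla_1 h_{ii}|^2
\;=\;\sum_{i>\mu,\,j>\mu}\frac{\lambda_j}{\lambda_i}\,|\nabla_1 h_{ii}|^2 .
\]
The second step is to symmetrize the right-hand side in the index pair $(i,j)$, writing it as $\tfrac12\sum_{i>\mu,\,j>\mu}\bigl(\tfrac{\lambda_j}{\lambda_i}|\nabla_1 h_{ii}|^2+\tfrac{\lambda_i}{\lambda_j}|\nabla_1 h_{jj}|^2\bigr)$, and to apply $a+b\ge 2\sqrt{ab}$ to each pair; the geometric mean of the two terms is precisely $|\nabla_1 h_{ii}|\,|\nabla_1 h_{jj}|\ge\nabla_1 h_{ii}\,\nabla_1 h_{jj}$, so summing over $i,j>\mu$ produces $\sum_{i>\mu,\,j>\mu}\nabla_1 h_{ii}\,\nabla_1 h_{jj}$, which is the claimed bound.

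I do not expect a genuine obstacle here; once the two weight comparisons are in place, the remainder is just Cauchy--Schwarz / AM--GM bookkeeping. The one point that must be checked carefully is the very first inequality, namely that every term discarded from the double sum is nonnegative — this is exactly where $\lambda_1\ge0$ enters, through $\lambda_j-\lambda_1>0$ for $j>\mu$, and it is the step that genuinely fails without convexity. This is also the structural reason that Proposition~\ref{prop-visc}, and the strong minimum principle for $\lambda_1$ that it is designed to supply, is formulated only for convex solutions.
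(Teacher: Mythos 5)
Your argument is correct and is essentially identical to the paper's proof of Claim \ref{claim-strict}: the same reduction to the diagonal terms with $i=j>\mu$, the same two weight comparisons $(\lambda_i-\lambda_1)^{-1}\ge\lambda_i^{-1}$ and $H\ge\sum_{l>\mu}\lambda_l$ (both using $\lambda_1\ge0$), and the same symmetrization followed by AM--GM. Nothing further is needed.
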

Now, let $M_t\subset N^{n+1}$ be a smooth  complete convex solution for $t>0$, which could be either compact or non-compact. One  expects $M_t$ to be  strictly convex, that is  to have $\lambda_1>0$ for $t>0$. 
%
%
Indeed, this follows  easily by  Proposition \ref{prop-visc} and the strong minimum principle for nonnegative supersolutions which is a consequence of the weak Harnack inequality for nonnegative viscosity super solutions to (locally) uniformly parabolic equations. (See Chapter 4 in \cite{W}).
\begin{theorem} \label{thm-strict}Suppose $F:M^n\times (0,T)\rightarrow(N^{n+1},\bar g)$ is a smooth complete convex solution to the IMCF with $H>0$ where $(N^{n+1},\bar g)$ is a space form. If $\lambda_1(p_0,t_0)=0$ at some $(p_0,t_0)$ with  $0<t_0<T$, then  $\lambda_1=0  $ on $M^n\times (0,t_0]$.
\begin{proof}
Since solution is smooth, $|H|$, $|\na H|$, and $|H^{-1}|=|\partial_t F|$ are locally bounded. Therefore, $\lambda _1$ is a nonnegative supersolution to equation \eqref{eq-uvisc} which is locally uniformly parabolic with bounded coefficients.  We can apply strong minimum principle on {a sequence $\{ \Omega_k \}$ of expanding domains  containing $(p_0, t_0)$ such that $M^n = \cup_k \Omega_k$} and conclude that the theorem holds. 
\end{proof}
\end{theorem}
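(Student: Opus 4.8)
The plan is to reduce everything to Proposition \ref{prop-visc}, which already carries out the hard analytic work: it establishes that $u := \lambda_1/H$ is a viscosity supersolution of the linear parabolic inequality \eqref{eq-uvisc}, whose zeroth- and first-order coefficients $W/H^4$ and $V/H^3$ are controlled pointwise by $|\nabla H|$ and $n$. Since the flow is smooth and $H>0$, on every precompact spacetime region the quantities $H$, $|\nabla H|$ and $H^{-1}=|\partial_t F|$ are bounded, so the coefficients of \eqref{eq-uvisc} are bounded there and the principal coefficient $H^{-2}$ is bounded above and away from zero; hence \eqref{eq-uvisc} is locally uniformly parabolic with bounded coefficients. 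By convexity $\lambda_1\ge 0$, so $u\ge 0$, and by smoothness of the flow $u$ vanishes at $(p_0,t_0)$ precisely because $\lambda_1(p_0,t_0)=0$.

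Next I would invoke the strong minimum principle for nonnegative viscosity supersolutions of uniformly parabolic equations, which follows from the weak Harnack inequality in this generality (e.g.\ Chapter 4 of \cite{W}): if $u\ge 0$ is such a supersolution on a cylinder $\Omega\times(t_1,t_2)$ and $u=0$ at an interior point $(p_0,t_0)$, then $u\equiv 0$ on $\Omega\times(t_1,t_0]$ on the connected component of $\Omega$ containing $p_0$. Because $M^n$ need not be compact, I would exhaust it by an increasing sequence of precompact open sets $\Omega_k$ with $p_0\in\Omega_1$, $M^n=\bigcup_k\Omega_k$, and $(p_0,t_0)$ in the parabolic interior of each $\Omega_k\times(0,t_0]$. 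Applying the strong minimum principle on each such cylinder forces $u\equiv 0$, hence $\lambda_1\equiv 0$, on $\Omega_k\times(0,t_0]$; letting $k\to\infty$ yields $\lambda_1\equiv 0$ on all of $M^n\times(0,t_0]$, as claimed.

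The main subtlety is that the minimum principle must be applied in the \emph{viscosity} sense rather than the classical one: $\lambda_1$, the smallest eigenvalue of the shape operator, is only Lipschitz where eigenvalues cross, which is exactly why Proposition \ref{prop-visc} is stated for viscosity supersolutions and why one appeals to the weak Harnack inequality for viscosity super solutions rather than a classical Hopf-type boundary argument. A secondary, more technical point is the role of completeness: it is used only to guarantee an exhaustion $M^n=\bigcup_k\Omega_k$ so that the vanishing of $\lambda_1$ propagates to the whole manifold. The uniform parabolicity constants on $\Omega_k\times(0,t_0]$ are allowed to degenerate as $k\to\infty$, which causes no difficulty since the argument only ever uses the minimum principle on a single fixed $\Omega_k$ at a time.
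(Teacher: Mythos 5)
Your proposal is correct and follows essentially the same route as the paper: both establish local uniform parabolicity and bounded coefficients of \eqref{eq-uvisc} from smoothness of the flow and $H>0$, then apply the strong minimum principle for nonnegative viscosity supersolutions (via the weak Harnack inequality of \cite{W}) on an exhaustion of $M^n$ by precompact domains containing $(p_0,t_0)$. The only cosmetic difference is that you phrase the argument for $u=\lambda_1/H$ while the paper states it for $\lambda_1$ directly, which is equivalent since $H>0$.
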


\begin{cor}\label{cor-strict}
Let $F:M^n\times (0,T)\rightarrow \R^{n+1}$ be a smooth complete convex solution to the IMCF. If $\mathcal{H}^{n}(\nu[M_{t_0}]) >0$ at $t_0\in (0,T)$, then $M_{t_0}$ is strictly convex.
\begin{proof} If it is not, Theorem \ref{thm-strict} implies $\lambda_1\equiv 0$ for all $M^n \times(0,t_0]$. {\color{black} The Gauss map $\nu : M_{t_0} \to\mathbb{S}^n$ is Lipschitz. Thus the area formula implies 
\be \int_{M_{t_0}} K d\mu=\int_{\mathbb{S}^n} \mathcal{H}^0( \nu^{-1}(\{z\}))d\mathcal{H}^n (z)\ge \mathcal{H}^{n}(\nu[M_{t_0}]). \ee 
Since $K\equiv 0$,} this is a contradiction and proves the assertion.  
\end{proof} 
\end{cor}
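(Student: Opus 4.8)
The plan is to argue by contradiction, using the strong minimum principle for $\lambda_1$ established in Theorem \ref{thm-strict}. Suppose $M_{t_0}$ is \emph{not} strictly convex. Since $M_{t_0}$ is a smooth convex hypersurface, this means its lowest principal curvature vanishes somewhere, i.e. $\lambda_1(p_0,t_0)=0$ for some $p_0\in M^n$. Because $F\colon M^n\times(0,T)\to\R^{n+1}$ is a smooth complete convex solution with $H>0$, and $\R^{n+1}$ is a space form ($K=0$), Theorem \ref{thm-strict} applies and forces $\lambda_1\equiv 0$ on all of $M^n\times(0,t_0]$. In particular $\lambda_1\equiv 0$ on $M_{t_0}$, so the Gauss--Kronecker curvature $K=\lambda_1\lambda_2\cdots\lambda_n$ vanishes identically on $M_{t_0}$.

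Next I would contradict the hypothesis $\mathcal{H}^n(\nu[M_{t_0}])>0$ by comparing it with the total curvature $\int_{M_{t_0}}K\,d\mu$. The Gauss map $\nu\colon M_{t_0}\to\mathbb{S}^n$ is Lipschitz (convexity bounds $|A|\le H$, which is locally bounded on a smooth solution), its differential is the shape operator, and by convexity $K\ge 0$. Applying the area formula, the Jacobian of $\nu$ equals $\det(h^i_j)=K$, so
\[
\int_{M_{t_0}}K\,d\mu=\int_{\mathbb{S}^n}\mathcal{H}^0\bigl(\nu^{-1}(\{z\})\bigr)\,d\mathcal{H}^n(z)\ \ge\ \mathcal{H}^n(\nu[M_{t_0}])>0,
\]
where the inequality uses that the multiplicity function is $\ge 1$ on the image $\nu[M_{t_0}]$. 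This contradicts $K\equiv 0$ on $M_{t_0}$, and the corollary follows.

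The only point requiring genuine care — and the main potential obstacle — is the application of the area formula for $\nu$ on a possibly non-compact hypersurface, together with the identification of the Jacobian of $\nu$ with $K$; both are standard once one records that $\nu$ is locally Lipschitz and that the multiplicity $z\mapsto\mathcal{H}^0(\nu^{-1}(\{z\}))$ is a measurable function bounded below by $1$ on $\nu[M_{t_0}]$. Everything else in the argument is an immediate consequence of Theorem \ref{thm-strict} and the convexity hypothesis.
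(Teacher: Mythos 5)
Your proposal is correct and follows essentially the same route as the paper: contradiction via Theorem \ref{thm-strict} to get $\lambda_1\equiv 0$ (hence $K\equiv 0$) on $M_{t_0}$, then the area formula for the Lipschitz Gauss map to bound $\int_{M_{t_0}}K\,d\mu$ from below by $\mathcal{H}^n(\nu[M_{t_0}])>0$. The extra remarks you add (why $\nu$ is Lipschitz, the identification of the Jacobian with $K$) are correct fillers of details the paper leaves implicit.
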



\begin{remark}[Strict convexity of compact solutions] \label{remark-conv}  We may use Theorem \ref{thm-strict} to show the initial strict convexity of smooth compact solution is preserved. {\color{black}Let $M_t$ be the smooth IMCF running from smooth compact immersed hypersurface $M_0$ with positive $h_{ij}$.}  By considering the first time $\lambda_1$ becomes zero at some point, Theorem \ref{thm-strict} implies that there is no such time as long as the smooth solution exists and this proves $h_{ij}$ is positive for $M_t$. {\color{black}Furthermore, if $M_t$ is a flow in $\mathbb{R}^{n+1}$, then it is the boundary of a compact convex set with interior by Hadamard \cite{Hadamard}, showing $M_t$ is convex in the sense of Definition \ref{def-convexity}.}
\end{remark}

\subsection{Speed estimate for closed star-shaped solutions}\label{appendix-starshaped}

The goal is this section is to give an alternative proof  of Theorem 1.1 in \cite{HI} which will be based  on the maximum principle. The theorem holds in  any dimension $n \geq 1$. 

\begin{thm}[Theorem 1.1 in \cite{HI}]\label{thm-equivHI} Let $F : M^n \times [0,T]\to \R^{n+1}$ be a smooth closed star-shaped solution to \eqref{eqn-IMCF} such that $M_0:=F_0(M^n)$
satisfies 
\be\label{eqn-star} 0 < R_1 \leq \la F,\nu \ra \leq R_2.
\ee
Then,  there is a constant $C_n>0$ depending only on $n$ such that 
\be\label{eqn-main}
 \fr{1}{H}\le C_n \, \left (  \fr{R_2}{R_1} \right ) \, \Bigl ( 1+ \fr{1}{t^{1/2}} \Bigr ) \,R_2 \, e^{\frac {t}n}\ee
 holds everywhere on $M^n\times [0,T].$

\end{thm}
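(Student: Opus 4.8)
The plan is to adapt, with the role of $|F|$ taken over by the support-type function $\la F,\nu\ra$, the maximum-principle scheme used for Theorem \ref{thm-mainest3}. First I would place the star centre at the origin, so that $u:=\la F,\nu\ra$ satisfies $R_1\le u\le R_2$ on $M_0$, and recall from Lemma \ref{lem-HI1} (parts (6) and (7), with $x_0=0$) that both $\psi:=H^{-1}$ and $u$ solve $\square f=\tfrac{|A|^2}{H^2}f$. The crucial point is that in any monomial $\psi^{p}u^{-p}$ the factor $\tfrac{|A|^2}{H^2}$ — which, unlike in the convex case, is \emph{not} bounded above — cancels identically; this cancellation is what lets the star-shaped estimate run without any two-sided pinching of $|A|^2$. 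Before the main argument I would record two cheap a priori facts for $t>0$: (i) evaluating $\la F,\nu\ra$ at the farthest and nearest points of $M_0$ gives $B_{R_1}\subset\hat M_0\subset\overline{B_{R_2}}$, so by comparison with sphere solutions $B_{R_1e^{t/n}}\subset\hat M_t\subset\overline{B_{R_2e^{t/n}}}$, hence $\la F,\nu\ra\le|F|\le R_2e^{t/n}$ on $M_t$; (ii) since $|A|^2\ge H^2/n$ one has $\square u\ge\tfrac1n u$, so the minimum principle gives $\la F,\nu\ra\ge R_1e^{t/n}$ on $M_t$. Together these sandwich $M_t$ between two concentric evolving spheres and confine the dimensionless ratio $v:=|F|/\la F,\nu\ra$ to the interval $[1,R_2/R_1]$; this is where the factors $R_2e^{t/n}$ and $R_2/R_1$ in \eqref{eqn-main} will eventually come from.

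For the heart of the proof I would run the maximum principle on a quantity of the shape
\[
w:=\frac{t\,\psi\,\varphi(v)}{u},\qquad v=\frac{|F|}{\la F,\nu\ra}\in[1,R_2/R_1],
\]
with $\varphi$ a suitably chosen convex, increasing function playing the role that $\sec(c\theta)$ played in Theorem \ref{thm-mainest3}. Using Lemma \ref{lem-pg}, Lemma \ref{lem-com2}, and the evolution equations in Lemma \ref{lem-23}(1),(3) and Lemma \ref{lem-HI1}, I would expand $\tfrac{\square w}{w}+\tfrac1{H^2}\tfrac{|\nabla w|^2}{w^2}$. Three features should cooperate: the $\tfrac{|A|^2}{H^2}$-contributions of $\psi$ and $u^{-1}$ cancel; monotonicity of $\varphi$ arranges that the residual $|A|^2$-term (entering through $\square v$, which contains $-\tfrac{|A|^2}{H^2}v$) carries the favourable sign; and the evolution $\square|F|^2=-\tfrac{2n}{H^2}+\tfrac4H\la F,\nu\ra$ supplies the good term $-\tfrac{2n}{H^2|F|^2}$. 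At an interior space-time maximum $(p_0,t_0)$ with $t_0>0$ one has $\nabla w=0$ and $\Delta w\le 0$; substituting $\nabla\psi/\psi=\nabla u/u-(\varphi'/\varphi)\nabla v$ into the gradient-square terms, the cross terms collapse — exactly as the bracket $(2)$ collapsed to $-(c^2-1)\tfrac{n-|\nabla r|^2}{H^2r^2}$ in \eqref{eq-we} — leaving a negative multiple of $(H|F|)^{-2}$, while $|A|^2\ge H^2/n$ converts the $\varphi$-term into a negative multiple of $\varphi'(v)/\varphi(v)$. One is then left with a scalar inequality of the form $0\le-c_1(H|F|)^{-2}+c_2(H|F|)^{-1}+c_3(1+t_0^{-1})$ with $c_i=c_i(n,R_2/R_1)>0$, bounding $(H|F|)^{-1}$, and hence $w$, at $(p_0,t_0)$ by $C(1+t_0^{-1/2})\,t_0$. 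Since $w$ attains its maximum there and $1\le\varphi\le\varphi(R_2/R_1)$, this yields $\tfrac1{H\la F,\nu\ra}\le C_n\tfrac{R_2}{R_1}\big(1+t^{-1/2}\big)$ everywhere on $M^n\times[0,T]$.

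It then remains only to write $\tfrac1H=\la F,\nu\ra\cdot\tfrac1{H\la F,\nu\ra}$ and insert the upper bound $\la F,\nu\ra\le R_2e^{t/n}$ from fact (i), which produces \eqref{eqn-main}. The step I expect to be genuinely delicate is the choice of $\varphi$ together with the maximum-point algebra: because there is no convexity hypothesis, $\tfrac{|A|^2}{H^2}$ is uncontrolled from above, so $\varphi$ must simultaneously (a) be monotone, so every stray $|A|^2$-term lands on the good side; (b) be bounded on $[1,R_2/R_1]$, so that comparing $w$ at its maximum with $w$ at an arbitrary point is legitimate; and (c) interact with the gradient identity at the maximum so that the outcome is a genuine self-improving quadratic inequality in $(H|F|)^{-1}$ (with the $1/t$ from $\partial_t t$ providing the $t^{-1/2}$ decay), rather than a merely qualitative statement. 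Carrying out (c) cleanly — as opposed to the several bookkeeping identities it unpacks into — is the technical core of the argument.
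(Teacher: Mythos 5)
Your plan is correct in outline and belongs to the same family of arguments as the paper's proof: a Pogorelov-type maximum principle applied to $t\,H^{-1}$ times a weight built from $\la F,\nu\ra$ and $|F|$, with the sphere barriers $R_1e^{t/n}\le\la F,\nu\ra\le |F|\le R_2e^{t/n}$ feeding in exactly as you describe. The concrete test function differs, though, and the difference is worth recording. The paper takes $\hat Q=t\,\varphi^{1-\e}(w)\,e^{\gamma|F|^2}H^{-1}$ with $w=\la F,\nu\ra^{-1}$ and the Ecker--Huisken cutoff $\varphi(s)=s/(2R_1^{-1}-s)$: there the unbounded $|A|^2/H^2$ is neutralized not by cancellation but by \emph{over}-cancellation, since $(1-\e)\fr{\varphi'w}{\varphi}=(1-\e)\fr{2}{2-wR_1}\ge 1$ once $\e\le \tfrac12 wR_1$, which is why $\e$ must be chosen depending on $R_1/(R_2e^{T/n})$; and the good quadratic term $-2n\gamma/H^2$ comes from $e^{\gamma|F|^2}$ with $\gamma\sim\e(R_2e^{T/n})^{-2}$, the $T$-dependence being removed afterwards by restarting the flow at time $t-1$. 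Your version — exact cancellation of $|A|^2/H^2$ between $H^{-1}$ and $\la F,\nu\ra^{-1}$, with the remaining structure carried by a function of the bounded ratio $v=|F|/\la F,\nu\ra$ — is really the star-shaped analogue of the proof of Theorem \ref{thm-mainest3} (and of Ivaki's remark at the end of Section \ref{sec-main-non com2} that one may use $\varphi(\sec\theta)$ there), and it has the mild advantage that no parameter needs to depend on $T$.

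Two points in your "delicate step" deserve explicit care. First, "convex and increasing" is not the right condition on $\varphi$: when you expand $\square\ln\varphi(v)$ and substitute the critical-point identity, the surviving $|\nabla v|^2$ coefficient is $\fr{2\varphi'^2}{\varphi^2}-\fr{\varphi''}{\varphi}$, so you need $\varphi''\varphi\ge 2\varphi'^2$ (i.e.\ $1/\varphi$ concave), not merely $\varphi''\ge0$; a choice such as $\varphi(v)=(A-v)^{-1}$ with $A\sim R_2/R_1$ satisfies this with equality and is bounded above and below on $[1,R_2/R_1]$ with $\fr{\varphi' v}{\varphi}$ bounded away from $0$. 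Second, after the collapse the net coefficient of $(H|F|)^{-2}$ is $-\fr{\varphi'v}{\varphi}\bigl(n-\fr{|F^T|^2}{|F|^2}\bigr)$ up to the absorbable $\fr{1}{H|F|}$ term; to keep this strictly negative in all dimensions $n\ge1$ (the theorem is claimed for $n\ge1$) you should use $|F^T|^2=|F|^2-\la F,\nu\ra^2$, which gives $n-\fr{|F^T|^2}{|F|^2}=n-1+v^{-2}\ge (R_1/R_2)^2>0$. With these two adjustments your scheme closes; the resulting power of $R_2/R_1$ in the final constant requires a little bookkeeping to match the stated $(R_2/R_1)^1$, but that is cosmetic.
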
 

\begin{proof} 
Since  $M_0$ satisfies \eqref{eqn-star}, by  Proposition 1.3 in \cite{HI},  we have 
\be\label{eqn-star2} 
R_1 \leq  R_1 \, e^{\fr{t}{n}} \le \la F, \nu\ra \leq |F|   \le R_2 \,  e^{\fr{t}{n}} \ee
for all $0 < t < +\infty$. 
Let us denote  $w:= \langle F, \n\rangle ^{-1}$ and we will consider a function $$Q:= \fr{ \varphi ^{1-\e}( w ) \, e^{\gamma |F|^2}}{H}$$ for some function $\varphi:=\varphi(w)$, constants $\gamma>0$ and $\e\in(0,1)$ which will be chosen shortly. 

{\color{black} By (i) in Lemma \ref{lem-23},} \[ \ba \PP \ln e^{|F|^2} =\PP |F|^2= -\fr{2n}{H^2} +\fr{4}{Hw}.\ea\] 
Moreover, by $(7)$ in Lemma \ref{lem-HI1}  and Lemma \ref{lem-com1} with $\beta=-1$, \[\ba \PP w&= - \fr{|A|^2}{H^2} w-\fr{2}{wH^2} |\na w|^2  \ea\] and hence, on $\{\varphi \neq 0\}$,
\[\PP \ln \varphi = \fr{\pp \varphi}{\varphi} + \fr{1}{H^2}\fr{|\nabla \varphi|^2}{\varphi^2}= -\fr{|A|^2}{H^2} \fr{\varphi' w}{\varphi} -\fr{|\nabla w|^2 }{H^2} \, \left (2 \, \fr{\varphi'}{w\varphi} + \fr{\varphi''}{\varphi}-\fr{\varphi'^2}{\varphi^2} \right ).\] 
Inspired by the choice of $\varphi$ in the well known  interior curvature estimate by Ecker and Huisken in \cite{EH} (see also \cite{CNS}), we define \be \label{eqn-functions}
\varphi (s):=\left (\fr{s}{2R_1^{-1}-s}\right).
 \end{equation} 
For this $\varphi:=\varphi(w)$, under the notation $\varphi'= \varphi'(w)$ and $\varphi''=\varphi''(w)$, a direct computation yields $$\fr{\varphi'w}{\varphi} =- \Bigl( \fr{2}{2-wR_1}\Bigr )\quad\text{and}\quad2 \, \fr{\varphi'}{w{\color{black}\varphi}} + \fr{\varphi''}{\varphi}-\fr{\varphi'^2}{\varphi^2}  = \fr{\varphi'^2}{\varphi^2}.$$
Lemma \ref{lem-pg} and the computations above imply 
\be\ba \label{eqn-all1} \PP \ln Q=&  \left[\fr{|A|^2}{H^2}+\fr{1}{H^2}  \fr{|\nabla H^{-1}|^2}{H^{-2}}\right] 
+ \gamma \left[\fr{4}{H  w} -\fr{2n}{H^2}\right]- (1-\e)\left [\fr{|A|^2}{H^2} \fr{\varphi' w}{\varphi}+\fr{1}{H^2} \fr{|\nabla \varphi|^2}{\varphi^2}\right] \\=&  -\Bigl( \fr{wR_1-2\e}{2-wR_1}\Bigr ) \, 
\fr{|A|^2}{H^2} \,
+ \, \Bigl(\gamma\fr{4}{H  w} -\gamma \fr{2n-4  \e^{-1}   \, \gamma \, |F|^2 \, \big |\nabla\, |F| \big |^2 }{H^2}\Bigr)\\& -\fr{1}{H^2} \left [ (1-\e)\fr{|\nabla \varphi|^2}{\varphi^2}- \fr{|\nabla H^{-1}|^2}{H^{-2}}  
+   \e^{-1}\gamma^2 \fr {|\nabla e^{ |F|^2}|^2}{| e^{|F|^2}|^2}\right]   .\ea\ee
Note that we have added and subtracted the term ${\frac{ \e^{-1} \gamma^2 \,\big  |\nabla |F|^2 \big |^2  }{H^2}}$ in the last equality.
At a nonzero critical point of $Q$, \[0=\fr{\nabla Q}{Q} =(1-\e) \fr{\nabla \varphi}{\varphi} +\gamma \fr{\nabla e^{|F|^2}}{e^{|F|^2}}+{\color{black}\fr{\nabla H^{-1}}{H^{-1}}}, \] and thus 
\[\ba \left|\fr{\nabla H^{-1}}{H^{-1}}\right|^2 = \left|(1-\e)\fr{\nabla \varphi}{\varphi} + \gamma\fr{\nabla e^{ |F|^2}}{e^{ |F|^2}}\right|^2 &= (1-\e)^2 \left|\fr{\nabla \varphi}{\varphi}\right|^2 + 2(1-\e)\gamma \left\la \fr{\nabla \varphi}{\varphi},\fr{\nabla e^{ |F|^2}}{e^{ |F|^2}}\right\ra+\gamma^2 \left |\fr {\nabla e^{|F|^2} } {e^{|F|^2} } \right|^2  \\&\le ((1-\e)^2+ {\e}({1-\e})) \left|\fr{\nabla \varphi}{\varphi}\right|^2 + (1+\fr{1-\e}{\e})\gamma^2 \left |\fr {\nabla e^{|F|^2} } {e^{|F|^2} } \right|^2 \\ &=  (1-\e)\fr{|\nabla \varphi|^2}{\varphi^2} +  \e^{-1}\gamma^2  \fr {|\nabla e^{|F|^2}|^2}{ e^{2 |F|^2}}. \ea\]
For a given $T >0$, note that $\fr{R_1}{R_2e^{\frac{T}{n}}} \le wR_1\le 1 $. It remains to choose $\e$ and $\gamma$. The choice $\e   :=  \fr{R_1}{2R_2e^{\frac{T}{n}}}$ makes the first term on RHS of the second equality in \eqref{eqn-all1} nonpositive. 
Next, choose $\gamma := \fr{\e}{4n}\fr{1}{(R_2e^{\frac{T}{n}})^2} >0$ so that $4\e^{-1}\gamma {|F|^2}  \le n$ on $M_t$ for $t \in [0,T]$.
Combining the choices and estimates, at a nonzero spatial critical point of $Q$,
\be\label{eqn-Q20}
\PP \ln Q  = \fr{\pp Q}{Q} +\fr{|\nabla Q|^2}{Q^2}\leq \gamma  \, \left ( -\fr{n }{H^2}+\fr{4}{H \, w} \right ). 
\ee

We will now apply the maximum principle on $\hat Q:= t  Q$.  Suppose that nonzero maximum of $\hat Q$ on $M ^n \times [0,T]$ occurs at the point $(p_0,t_0)$, which necessarily implies $t_0>0$.   At this point, \eqref{eqn-Q20} implies 
\be\label{eq-maxpoint1} 0 \le \PP  \ln \hat Q  \le \gamma \, \left( -\fr{n }{H^2}+\fr{4}{H \, w} \right)  + \fr{1}{t_0} 
\le \gamma \, \left( -\fr{n}{2H^2} +\fr{8}{n}R_2^2e^{2\frac{T}{n}}\right) + \fr{1}{t_0}\ee
where the second inequality comes from  $$ \frac {4}{H\,w} \leq\fr{8}{nw^2}+  \frac n{2 H^2} \leq \fr{8}{n}R^2_2e^{2\frac{T}{n}} +  \frac n{2 H^2}.$$  The rest is a standard argument shown in the proof of Theorem 3.1 \cite{EH}. By the choices of $\e$, $\gamma$, bounds \eqref{eqn-star2}  and \[ \fr{R_1}{2R_2 e^{T/n}} \le\varphi((R_2e^{T/n})^{-1}) \le \varphi(w) \le \varphi(R_1^{-1})=1,   \] 
we proceed and obtain, for every $(p,t)\in M^n\times (0,T]$,
\be\fr{1}{H^2}(p,t)\le C_n  \left(\fr{R_2}{R_1}e^{\fr{T}{n}}\right)^{2-\e}(R_2e^{\fr{T}{n}})^2 \left(1+\frac{1}{t}\right). \ee

Now  for  time $t>1$, we can alway apply this estimate starting at  time $t-1$. Inequality \eqref{eqn-star2} implies that the ratio between star-shapedness bounds from above and below remains  unchanged over time. This way we can replace $({R_2}e^{\fr{T}{n}}/R_1)^{2-\e}$ in the above estimate by $({R_2}/R_1)^{2-\e}$ after possibly enlarging the  constant $C_n$. Since $( {R_2}/ {R_1})^{2-\e} \le (R_2/R_1)^2$, the  theorem follows.
\end{proof}

\subsection{Smooth approximation of convex hypersurfaces}\label{appendix-approximation}

In this appendix, unless it is stated otherwise, {\em  convergence}  of compact convex sets (or their boundary hypersurfaces) means {\em the convergence in the Hausdorff metric}, defined as 

\be\label{eq-Hausdorff}d_H(A,B) := \max (\sup_{x\in A} \,\inf_{y\in B} \Vert x-y\Vert , \sup_{x\in A}\, \inf_{y\in B} \Vert x-y\Vert  ).\ee

In case where  $A$, $B$ are compact convex sets, it is known that $d_H(A,B)= d_H(\p A, \p B)$ (Lemma 1.8.1 \cite{Rolf}).  For a compact convex set $\hat M\subset \mathbb{R}^{n+1}$, $\hat M ^\delta$ denotes the $\delta$-envelope \[\hat M^\delta := \{ x\in \mathbb{R}^{n+1}\,:\,\dist(x, \hat M) \le \delta \}=\hat M + \delta \overline{B_0(1)}. \] Here the $\dist(x,\hat M)$ is measured in {\color{black}Euclidean} distance.

\begin{lemma}\label{lem-euclidapprox}

{\color{black} For $n\ge1$,} let $\hat M\subset \mathbb{R}^{n+1}$ be a compact convex set with non-empty  interior. Suppose $0\in \text{int}(\hat M)$. Then there is a sequence compact convex sets $\hat M_k$ with smooth strictly convex boundaries  such that \[\hat M^{de}_k:= (1+k^{-1})\hat M_k \text{ is strictly decreasing},\quad \hat M^{in}_k:= (1-k^{-1}) \hat M_k \text{ is strictly increasing, }\]
and both {\color{black}converge} to $\hat M$ as $k\to \infty$. Here, we say that $\hat \Sigma^{de}_k$ ($\hat \Sigma^{in}_k$) is strictly decreasing (strictly increasing) if {\color{black}$\hat \Sigma^{de}_{k+1} \subset \text{int} \, (\hat \Sigma^{de}_k)$ ($\hat \Sigma^{in}_{k} \subset \text{int} \, (\hat \Sigma^{in}_{k+1})$}, respectively). 
\end{lemma}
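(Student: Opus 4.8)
The plan is to reduce the statement to the classical smoothing theorem for convex bodies and then manufacture the two one-parameter families by an elementary dilation argument. First I would recall the standard fact (see e.g. \cite{Rolf}) that every compact convex body in $\R^{n+1}$ is a Hausdorff limit of compact convex sets with $C^\infty$ boundary of positive Gaussian curvature: one mollifies the support function $h_{\hat M}$ of $\hat M$ — say by convolving over $SO(n+1)$ against a smooth probability density concentrated near the identity, which returns an average of support functions of rotated copies of $\hat M$, hence again a (now smooth) support function, so that the curvature form $\nabla^2 h + h\,\sigma$ on $\mathbb{S}^n$ stays $\ge 0$ — and then adds a ball $\rho\,\overline{B_0(1)}$, whose only effect is to add the constant $\rho$ to the support function (hence just $\rho$ to the Hausdorff distance) while raising the curvature form to $\ge \rho\,\sigma > 0$. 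Given a sequence $\delta_k \downarrow 0$ to be fixed below, I would apply this with error $<\delta_k/2$ to obtain a smooth strictly convex $\hat L_k$ and set $\hat M_k := \hat L_k + \tfrac{\delta_k}{2}\overline{B_0(1)} = \hat L_k^{\delta_k/2}$; Minkowski sum with a ball preserves smoothness and keeps the curvature form positive, while $d_H(\hat L_k,\hat M)<\delta_k/2$ forces $\hat M \subseteq \hat M_k \subseteq \hat M^{\delta_k}$ and $d_H(\hat M_k,\hat M)\le\delta_k$. In particular $0\in\text{int}(\hat M_k)$.

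For the second step I would fix $r>0$ with $\overline{B_0(r)}\subseteq\hat M$ and use the elementary inclusion $\hat M^{\delta}=\hat M+\delta\,\overline{B_0(1)}\subseteq \hat M+\tfrac{\delta}{r}\hat M=(1+\tfrac{\delta}{r})\hat M$, so that the bodies of the first step satisfy $\hat M\subseteq\hat M_k\subseteq(1+\tfrac{\delta_k}{r})\hat M$. Dilating this chain and using that $\mu\,\hat K\subseteq\text{int}(\hat K)$ whenever $0\in\text{int}(\hat K)$ and $0<\mu<1$, both monotonicity claims and the convergence reduce to numerical inequalities. Indeed
\begin{gather*}
(1+k^{-1})\hat M\ \subseteq\ \hat M_k^{de}\ \subseteq\ (1+k^{-1})\bigl(1+\tfrac{\delta_k}{r}\bigr)\hat M,\\
(1-k^{-1})\hat M\ \subseteq\ \hat M_k^{in}\ \subseteq\ (1-k^{-1})\bigl(1+\tfrac{\delta_k}{r}\bigr)\hat M,
\end{gather*}
so both families converge to $\hat M$ for any $\delta_k\to0$, while $\hat M_{k+1}^{de}\subseteq\text{int}(\hat M_k^{de})$ follows once $(1+(k+1)^{-1})(1+\delta_{k+1}/r)<1+k^{-1}$, and $\hat M_k^{in}\subseteq\text{int}(\hat M_{k+1}^{in})$ follows once $(1-k^{-1})(1+\delta_k/r)<1-(k+1)^{-1}$.

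Finally, both gaps $(1+k^{-1})-(1+(k+1)^{-1})$ and $(1-(k+1)^{-1})-(1-k^{-1})$ equal $\tfrac1{k(k+1)}$, so a choice such as $\delta_k:=\tfrac{r}{10\,k^2}$ makes $\delta_k/r$ of smaller order and verifies both inequalities for every $k$ by a one-line computation. With this $(\delta_k)$ the sets $\hat M_k$ produced in the first step have all the asserted properties, completing the proof.

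I do not anticipate a genuine obstacle here: the smoothing step is classical, and the only point needing care is that a \emph{single} sequence $\hat M_k$ must simultaneously make $\hat M_k^{de}$ strictly decrease and $\hat M_k^{in}$ strictly increase, which is exactly what pins down the quantitative requirement $\delta_k = O(r/k^2)$. Retaining the one-sided normalization $\hat M\subseteq\hat M_k$ in the first step (instead of a two-sided Hausdorff bound) is deliberate: it makes the dilation comparisons in the second step immediate and avoids inner parallel bodies, for which the identity $(\hat M\ominus\overline{B_0(\rho)})\oplus\overline{B_0(\rho)}=\hat M$ can fail for a general convex body.
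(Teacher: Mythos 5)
Your proof is correct and follows essentially the same route as the paper: invoke the classical smoothing theorem for convex bodies (Schneider, Theorem 3.4.1) to get smooth strictly convex approximants, then exploit the strict monotonicity of the dilates $(1\pm k^{-1})\hat M$ to force strict monotonicity of $(1\pm k^{-1})\hat M_k$ once the approximation is fast enough. The only difference is presentational: the paper extracts a suitable subsequence by a diagonal argument, whereas you normalize $\hat M\subseteq\hat M_k\subseteq(1+\delta_k/r)\hat M$ and verify the required inequalities explicitly with $\delta_k=O(r/k^2)$.
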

\begin{proof} 
By Theorem 3.4.1  in \cite{Rolf} and its immediate following discussion, there is a sequence of compact convex sets $\hat M_k$ with non-empty interior and smooth strictly convex boundaries such that $d_H(\hat M,\hat M_k) \to 0$.  Note that $(1+k^{-1})\hat M$ and $(1-k^{-1})\hat M$ are strictly monotone sets. Hence, using a diagonal argument, we may choose a subsequence of $\hat M_k$ so that $(1+k^{-1})\hat M_k$ and $(1-k^{-1})\hat M_k$ are strictly monotone as well. 
\end{proof}

Let $\hat \Gamma$ be a compact convex set in the upper open hemisphere $\mathbb{S}^n\cap \{x_{n+1}>0\}=:\mathbb{S}^n_+$. Since $\mathcal{C}\hat \Gamma$ is convex in $\mathbb{R}^{n+1}$, if we define $\hat \Omega \times \{1\} := \mathcal{C}\hat \Gamma \cap \{x_{n+1}=1\}$, then $\hat\Omega$ is a compact convex set in $\mathbb{R}^n$. $\hat \Gamma$ and $\hat \Omega$ are related by $\hat \Gamma = \varphi (\hat \Omega)$ using the {\color{black}gnomonic} projection $(x,1) \in \mathbb{R}^{n+1}\mapsto \varphi (x)\in\mathbb{S}^n_+$  \be\label{eq-varphi}\varphi(x) = \frac{1}{(1+|x|^2)^{1/2}}(x,1) \quad\text{ for }x\in\mathbb{R}^n.\ee  Since $(\mathbb{R}^n,\varphi^* g_{\mathbb{S}^n})$ and $(\mathbb{S}^n_+, g_{\mathbb{S}^n})$ are isometric, we have $|\p \hat \Gamma | = |\p \hat \Omega|_{\varphi^* g_{\mathbb{S}^n}}$.  {\color{black}Here, the surface area measure $|\cdot|_{\varphi^* g_{\mathbb{S}^n}}$ is $(n-1)$-Hausdorff measure on metric space $(\mathbb{R}^n,d_{\varphi^* g_{\mathbb{S}^n}})$, induced from Riemannian structure $(\mathbb{R}^n,\varphi^* g_{\mathbb{S}^n})$. Since $\p \hat \Omega$ is $(n-1)$-rectifiable, we will use the area formula and avoid using the metric $d_{\varphi^* g_{\mathbb{S}^n}}$ in actual computation of $|\p \hat \Omega|_{\varphi^* g_{\mathbb{S}^n}}$.}

\begin{lemma}\label{lem-b2} {\color{black} For $n\ge1$,} assume that a sequence of compact convex sets $\hat M_k \subset \mathbb{R}^{n+1}$ converges to a compact convex set $\hat M$ with non-empty interior. Then $\lim_{k \to \infty} |M_k| = |M|$. Similarly, if a sequence of compact convex sets $\hat \Gamma_k \subset \mathbb{S}^n_+$ converges to a compact convex set with non-empty interior $\hat \Gamma \subset  \mathbb{S}^n_+$, then $\lim_{k \to \infty} |\Gamma_k| = |\Gamma|$.   

\begin{proof}
Our proof is a modification of the proof of Theorem 4.2.3 in \cite{Rolf}. For the first part, we may assume $0 \in \text{int}(\hat M_k)$ for all $k$. Let $\rho (\hat M_k, \cdot)$ be a spherical parametrization of $ M_k$ around $0$, meaning that $\rho(\hat M_k,y)\, y$ for $y\in \mathbb{S}^n$ is a point in $M_k$.  Let   $\nu(\hat M_k,y)$  denote an arbitrary outer unit normal (in the sense of supporting normal) of $\hat M_k$ at $\rho (\hat M_k, y) y$. The normal $\nu(\hat M_k,y)$ is unique for $\mathcal{H}^{n}$-almost all $y\in \mathbb{S}^{{\color{black}n}}$ (Theorem 2.2.5 in \cite{Rolf}). If $\hat M_k$ converges 
to $\hat M$ as $k\to \infty$, then $\rho(\hat M_k,\cdot) \to \rho (\hat M,\cdot)$ everywhere.
Moreover, $\nu(\hat M_k,\cdot)\to \nu(\hat M, \cdot)$ $\mathcal{H}^{n}$-almost everywhere,  otherwise  if  $\nu(\hat M_{i_k},y')\to \nu'$ for some $\nu'$ as $i_k\to\infty$, this  would implies that $M$ has a supporting hyperplane $\{\la x-\rho(\hat M, y')y' , \nu' \ra =0\}$ at $\rho(\hat M,y')y'$, but on the other hand the outer normal of $M$ uniquely exists  $\mathcal{H}^{n}$-almost everywhere. Finally, since $\hat M_k$ contains the origin in its interior, there is a uniform $\delta>0$ such that $\la y ,\nu(\hat M_k, y)\ra \ge \delta$ for all $k$ and $y$.

Let $\psi:U\subset \mathbb{R}^n \to \mathbb{S}^n$, $z=(z^1,\ldots,z^n)\mapsto \psi(z)=y$, be a smooth local coordinate chart of $\mathbb{S}^n$ and $g_{ij}$ be {\color{black}the metric $ g_{\mathbb{S}^n}$ on $U$.} Note that $\rho(\hat M_k,\psi (\cdot))$ is a Lipschitz function. At each point $y$ where the  function  $\rho_k(y):=\rho(\hat M_k,y)$ is differentiable, one can directly compute that \[\nu(\hat M_k,y) = \frac{\rho_k y -g^{ij}\fr{\p \rho_k}{\p z^i} \fr{\p {\color{black}y}}{\p z^j}}{\sqrt{\rho_k^2 + \Vert d\rho_k\Vert _{g_{\mathbb{S}^n}}^2}} .\] Thus from the convergence of $\nu(\hat M_k,\cdot)$ and the lower bound $\la y ,\nu(\hat M_k, y)\ra \ge \delta$, if $\rho_{\infty}(y):=\rho(\hat M, y)$, then we have $\left|\frac{\p \rho_k}{\p z^i} \right|\le C_\delta$ and $\frac{\p \rho_k }{\p z^i}\to \frac{\p \rho_\infty}{\p z^i} $ almost everywhere for all $i=1,\ldots,n$.

Denote by $f_k$ and $f_\infty$ $: U \to\mathbb{R}^{n+1}$ the functions $f_k(z) :=\rho(\hat M_{\color{black}k},\psi (z))\psi(z)$ and $f_\infty(z) := \rho(\hat M, \psi(z))\psi(z)$. By the area formula, we have  $|f_k(U)|= \int_U J_{f_k}(z) dz^n $,  where 
\[\ba J_{f_k}(z) &= \sqrt{\det \left[ \left\la \fr{\p f_k}{\p z^i}(z), \fr{\p f_{k}}{\p z^j}(z)\right\ra\right]}= \sqrt{\det \left[\frac{\p \rho_k}{\p z^i}\frac{\p \rho_k}{\p z^j}+\rho_k^2 g_{ij} \right]}\\&=\sqrt {\rho_k^{2n-2}(\rho_k^2 +\Vert d\rho_k\Vert_{g_{\mathbb{S}^n}} ^2 )}\sqrt{\det g_{ij}}= \rho_k^{n-1} \sqrt{\rho_k^2+ \Vert d\rho_k\Vert ^2_{g_{\mathbb{S}^n}}} \sqrt {\det g_{ij}}\ea \]
holds almost everywhere. 
The  Lebesgue dominated convergence theorem, then yields that 
\[|f_k(U)| = \int _U J_{f_k}(z) dz^n \to \int_U \rho_\infty^{n-1}\sqrt{\rho_\infty^2+{ \Vert d\rho_\infty\Vert _{g_{\mathbb{S}^n}}^2}} \sqrt {\det g_{ij}} = |f_\infty(U)|, \quad  \mbox{as}\,\, k \to \infty. \]
Using a standard partition of unity argument, we conclude that $|M_k| \to |M|$ as $k\to \infty$, which concludes the proof of the first assertion of the lemma.

\smallskip
We will now  prove the second assertion of the lemma.  Note that the preimages of $\hat \Gamma_k$ and $\hat \Gamma$ under $\varphi$ are compact convex sets in $\mathbb{R}^n$. On a given compact set, the metrics induced by $(\mathbb{R}^n, \varphi^* g_{\mathbb{S}^n})$ and  $(\mathbb{R}^n, g_{\mathbb{R}^n})$ are equivalent. i.e. one is less than a constant multiple of the other and the constant depends on the compact set.  We may assume that there is some $p\in \mathbb{R}^n$ such that $\hat M_k:=\varphi^{-1}(\hat\Gamma_k)-p$ and $\hat M:= \varphi^{-1}(\hat \Gamma)-p$ contain the origin in their interiors and $d_H(\hat M_k, M) \to 0$ as $k\to \infty$. 

Defining  $\rho_k$, $\rho_\infty$ $:\mathbb{S}^{n-1}\to \mathbb{R}$, a smooth local chart $\psi: U\subset \mathbb{R}^{n-1}\to \mathbb{S}^{n-1}$, and  the functions $f_k$, $f_\infty$ $: U \to \mathbb{R}^n $ similarly as in the previous  case (note that the dimension is $1$ less than the dimension in the previous case), we get the convergence of $\rho_k$ to $\rho_\infty$ with positive uniform upper and lower bounds and the convergence of $\frac{\p \rho_k}{\p z^i}$ to $\frac{\p \rho_\infty}{\p z^i}$ with uniform bound on their absolute value. 
With $\tilde \varphi(x) := \varphi(x+p)$, the area formula says  that $|\varphi(f_k(U)+p) |= \int_U J_{\tilde\varphi \circ f_k}(z)dz^n$,  where 
\[J_{\tilde\varphi \circ f_k}(z) = \sqrt{\det \left[ \left\la \frac{\p\tilde\varphi}{\p x^\alpha}(f_k(z)) \frac{\p f_k^\alpha}{\p z^i}(z) ,\frac{\p\tilde\varphi}{\p x^\alpha} (f_k(z))\frac{\p f_k^\alpha}{\p z^j}(z)\right\ra\right]} .\]
Note that $\varphi$ is a smooth function, which in particular has bounded higher order derivatives  on each compact domain. Therefore  the Lebesgue dominated convergence theorem yields that  
$$|\varphi(f_k(U)+p)| \to |\varphi(f_\infty(U)+p)|, \quad \mbox{as}\,\, k \to \infty$$  and a partition of unity can be used to show 
\[|\varphi(M_k +p)|=|\Gamma_k| \to|\varphi(M+p)| =|\Gamma|.\]

\end{proof}

\end{lemma}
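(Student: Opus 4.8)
The plan is to realize each convex boundary as a radial graph over the sphere, compute its area with the area formula, and pass to the limit by dominated convergence; the whole argument rests on upgrading Hausdorff convergence of the bodies to almost-everywhere convergence of their outer unit normals. For the first assertion I would fix an interior point of $\hat M$ and, after a translation, arrange that $0\in\operatorname{int}(\hat M_k)$ for all large $k$ with a uniform interior ball; this is possible since $\hat M_k\to\hat M$ in the Hausdorff metric and $\hat M$ has nonempty interior. Writing $\rho_k(y):=\rho(\hat M_k,y)$ and $\rho_\infty(y):=\rho(\hat M,y)$ for the radial functions, so that $M_k=\{\rho_k(y)\,y:y\in\mathbb S^n\}$, Hausdorff convergence is equivalent to $\rho_k\to\rho_\infty$ uniformly on $\mathbb S^n$; in particular $0<c\le\rho_k\le C$ uniformly.

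The key step is the convergence of the outer normals. For $\mathcal H^n$-a.e.\ $y\in\mathbb S^n$ the body $\hat M$ has a unique supporting hyperplane at $\rho_\infty(y)\,y$ (Theorem 2.2.5 in \cite{Rolf}), hence a unique outer unit normal $\nu(\hat M,y)$; since any subsequential limit of $\nu(\hat M_{k_j},y)$ is an outer normal of $\hat M$ at that point, uniqueness forces $\nu(\hat M_k,y)\to\nu(\hat M,y)$ a.e. Because $0$ is a uniform interior point, $\langle y,\nu(\hat M_k,y)\rangle\ge\delta>0$ uniformly. At points where $\rho_k$ is differentiable one has the identity
\[
\nu(\hat M_k,y)=\frac{\rho_k(y)\,y-\nabla^{\mathbb S^n}\rho_k(y)}{\sqrt{\rho_k(y)^2+\Vert d\rho_k(y)\Vert_{g_{\mathbb S^n}}^2}},
\]
where $\nabla^{\mathbb S^n}\rho_k$ denotes the gradient of $\rho_k$ on $\mathbb S^n$; combined with the two facts above, this forces a uniform bound $\Vert d\rho_k\Vert_{g_{\mathbb S^n}}\le C_\delta$ together with $d\rho_k\to d\rho_\infty$ a.e.

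With these in hand, I would fix a smooth chart $\psi:U\subset\mathbb R^n\to\mathbb S^n$ with metric coefficients $g_{ij}$, set $f_k(z):=\rho_k(\psi(z))\,\psi(z)$ (a Lipschitz map), so that $|f_k(U)|=\int_U J_{f_k}$ by the area formula, where a direct computation gives, a.e., $J_{f_k}(z)=\rho_k^{\,n-1}\sqrt{\rho_k^2+\Vert d\rho_k\Vert_{g_{\mathbb S^n}}^2}\,\sqrt{\det g_{ij}}$. The uniform bounds on $\rho_k$ and $d\rho_k$ provide a fixed integrable majorant on $U$, so Lebesgue's dominated convergence theorem gives $|f_k(U)|\to|f_\infty(U)|$, and a partition of unity over a finite atlas upgrades this to $\lim_k|M_k|=|M|$. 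For the spherical assertion I would transport everything to $\mathbb R^n$ through the gnomonic projection $\varphi$ of \eqref{eq-varphi}: for a suitable $p$, the sets $\hat M_k:=\varphi^{-1}(\hat\Gamma_k)-p$ and $\hat M:=\varphi^{-1}(\hat\Gamma)-p$ are compact convex bodies in $\mathbb R^n$ with $0$ a uniform interior point and $d_H(\hat M_k,\hat M)\to0$, since $\varphi^{-1}$ is a diffeomorphism onto $\mathbb S^n_+$ and $\varphi^*g_{\mathbb S^n}$ is comparable to $g_{\mathbb R^n}$ on compacta. Running the same radial argument one dimension lower to control the boundaries $\partial\hat M_k$, and writing $\tilde\varphi(x):=\varphi(x+p)$, the area formula for $\tilde\varphi\circ f_k$ together with the smoothness and bounded derivatives of $\varphi$ on compacta and dominated convergence give $|\varphi(f_k(U)+p)|\to|\varphi(f_\infty(U)+p)|$; a partition of unity then yields $|\Gamma_k|\to|\Gamma|$.

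The main obstacle is the middle step: extracting from mere Hausdorff (equivalently uniform radial) convergence both an almost-everywhere limit and a uniform $L^\infty$ bound for the gradients $d\rho_k$. Pointwise convergence of Lipschitz functions does not control gradients in general, so convexity must genuinely be used; it enters through the a.e.\ uniqueness of supporting hyperplanes, which forces convergence of the outer normals, and then the uniform interior-ball condition converts normal convergence into the desired gradient bound and a.e.\ gradient convergence. Everything after that is a routine combination of the area formula and the dominated convergence theorem.
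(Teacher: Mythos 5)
Your proposal is correct and follows essentially the same route as the paper's proof: radial parametrization over $\mathbb{S}^n$, a.e.\ convergence of outer normals from uniqueness of supporting hyperplanes, the uniform bound $\la y,\nu(\hat M_k,y)\ra\ge\delta$ yielding uniform gradient bounds and a.e.\ gradient convergence, then the area formula with dominated convergence and a partition of unity, and the gnomonic projection for the spherical case. No gaps.
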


The following is a well known lemma and a stronger statement than this also holds, but we provide a proof of this simple version for the completeness of our work. 

\begin{lemma}\label{lem-outermin}
{\color{black} For $n\ge1$,} a  convex hypersurface $M=\p\hat M$ in $\mathbb{R}^{n+1}$ has the outer area minimizing property among convex hypersurfaces,  i.e. if $\hat M \subset \hat M'$ then $| M|\le |M'|$.  Moreover, when $M$ is smooth strictly convex, then the  property is strict in the sense that  equality holds if and only if $M=M'$.

 \begin{proof}
The proof uses a standard calibration argument. Assume $0\in\text{int}(\hat M)$. Assume $M$ is smooth and strictly convex. Then 
$\lambda M$, $\lambda\ge1$, gives a foliation of  smooth strictly convex hypersurfaces. Let $\hat M'$ be a set containing $\hat M$.  The foliation gives a smooth vector field consisting  of the outer normal vectors $\nu$ of $\{\lambda M\}_{\lambda \ge1}$. {\color{black}If we denote the unit normal on $\p(\hat M' \setminus \hat M) $ pointing from $\hat M' \setminus \hat M$ by $\nu '$, then} the divergence theorem implies 
{\color{black} \[0\le \int_{\hat M' \setminus \hat M} H = \int_{\hat M' \setminus \hat M} \text{div }\nu  = \int_{M'} \la \nu,\nu' \ra  d A + \int_M \la \nu,\nu '\ra  d A=\int_{M'} \la \nu, \nu '\ra  dA - \int_M dA    \]}and hence  
 \[|M|=\int_M dA \le \int_{M'}{\color{black} \la \nu, \nu' \ra} d A \le \int_{M'} dA =|M'|. \] (The divergence theorem can be applied  for a set with rough boundary when the boundary consists of convex hypersurfaces. One could also avoid doing this by approximating $\hat M'$ from outside using Lemma \ref{lem-euclidapprox} and Lemma \ref{lem-b2}).  The strict outer area minimizing is a consequence from the fact $H>0$ on $\hat M' \setminus \hat M$.

For a general convex $M=\p \hat M$, we consider smooth approximation from inside, say $\hat M^{in}_k$ which was shown to exist in  Lemma \ref{lem-euclidapprox}. By the first case, $|M^{in}_k| \le |M'|$. Lemma \ref{lem-b2} implies that $|M^{in}_k| \to |M|$ as $k\to \infty$ and this finishes the proof.
\end{proof}

\end{lemma}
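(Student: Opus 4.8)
\textbf{Proof proposal for Lemma \ref{lem-outermin}.}

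The plan is to prove the smooth strictly convex case first by a calibration argument, and then deduce the general case by approximation from inside using Lemma \ref{lem-euclidapprox} and Lemma \ref{lem-b2}. After a translation we may assume $0\in\text{int}(\hat M)$, so that the radial dilations $\{\lambda M\}_{\lambda\ge 1}$ form a smooth foliation of $\R^{n+1}\setminus\text{int}(\hat M)$ by smooth strictly convex hypersurfaces. The outward unit normals of the leaves define a smooth vector field $\nu$ on this region whose divergence is exactly the mean curvature $H$ of the leaf through each point, so $\text{div}\,\nu=H>0$ there because each leaf is strictly convex. Given any convex $\hat M'$ with $\hat M\subset\hat M'$, apply the divergence theorem on $\hat M'\setminus\hat M$: the boundary integral splits into a contribution over $M'$ (where the outward conormal of $\hat M'\setminus\hat M$ agrees with the outward normal of $M'$) and a contribution over $M$ (where the outward conormal points into $\hat M$, i.e.\ is $-\nu$). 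Since $H\ge 0$, we get $0\le\int_{M'}\langle\nu,\nu'\rangle\,dA-\int_M dA$, and since $|\langle\nu,\nu'\rangle|\le 1$ this yields $|M|\le\int_{M'}\langle\nu,\nu'\rangle\,dA\le|M'|$. For the strictness claim, note that if equality holds then $\int_{\hat M'\setminus\hat M}H=0$, forcing $\hat M'\setminus\hat M$ to have empty interior, hence $M=M'$.

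For the general convex case, let $\hat M^{in}_k$ be the smooth strictly convex inner approximations from Lemma \ref{lem-euclidapprox}, which increase to $\hat M$. Each $\hat M^{in}_k\subset\hat M\subset\hat M'$, so the first case gives $|M^{in}_k|\le|M'|$; letting $k\to\infty$ and invoking Lemma \ref{lem-b2} to pass the areas to the limit, we conclude $|M|\le|M'|$.

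The only genuine subtlety is justifying the divergence theorem on the set $\hat M'\setminus\hat M$, whose boundary is merely Lipschitz (a difference of two convex bodies, one possibly only $C^0$). This is standard since the boundary consists of convex hypersurfaces, but it can be sidestepped entirely: first approximate $\hat M'$ from outside by smooth strictly convex bodies $\hat M'^{de}_k$ via Lemma \ref{lem-euclidapprox}, run the calibration argument on the smooth annular region $\hat M'^{de}_k\setminus\hat M$ (with $M$ smooth strictly convex), and then let $k\to\infty$ using Lemma \ref{lem-b2}; the inner approximation of $M$ is handled as above. I expect this approximation bookkeeping to be the main (though routine) obstacle, while the calibration identity itself is immediate.
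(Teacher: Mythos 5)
Your proposal is correct and follows essentially the same route as the paper: the calibration argument via the radial dilation foliation $\{\lambda M\}_{\lambda\ge1}$ with $\operatorname{div}\nu=H>0$, the divergence theorem on $\hat M'\setminus\hat M$ giving $|M|\le\int_{M'}\langle\nu,\nu'\rangle\,dA\le|M'|$, strictness from $H>0$, and the reduction of the general case by inner approximation via Lemma \ref{lem-euclidapprox} and Lemma \ref{lem-b2}. Even your remark about sidestepping the divergence theorem on the rough boundary by approximating $\hat M'$ from outside matches the paper's parenthetical comment.
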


We do have a similar result for convex hypersurfaces in $\mathbb{S}^n$.

\begin{lemma} \label{lem-outersphere} {\color{black}For $n\ge 2$,} let $\hat \Gamma\subset \mathbb{S}^{n}\cap \{x_{n+1}>0\}= \mathbb{S}^n_+ $ be a compact convex set with  non-empty interior. Then $\hat \Gamma$ can be approximated from inside (and outside) by a  strictly monotone    sequence  of compact sets in $\mathbb{S}^n_+$ with smooth strictly convex boundaries.  For any sequence of compact convex sets $\hat \Gamma_k$ converging to $\hat \Gamma$, 
we have $|\Gamma_k|\to |\Gamma|$ as $k\to \infty$. Moreover, $\hat \Gamma$ satisfies the outer area minimizing property on $\mathbb{S}^n_+$.
That is, if  $\hat\Gamma \subset \hat \Gamma'\subset\mathbb{S}^n_+ $ and $\hat \Gamma'$ is convex, then   $|\Gamma|\le |\Gamma'|$, and 
if $\Gamma$ is smooth strictly convex, then $|\Gamma|=|\Gamma'|$ if and only if $\Gamma=\Gamma'$. 
\end{lemma}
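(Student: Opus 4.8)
\textbf{Plan for the proof of Lemma \ref{lem-outersphere}.} The statement has three parts: (i) strictly monotone smooth approximation of $\hat\Gamma$ from inside and outside by compact convex sets in $\mathbb{S}^n_+$ with smooth strictly convex boundary; (ii) continuity of the perimeter $|\Gamma_k|\to|\Gamma|$ under Hausdorff convergence; (iii) the (strict) outer area minimizing property of $\Gamma$ in $\mathbb{S}^n_+$. My plan is to transport each statement to $\mathbb{R}^n$ through the gnomonic projection $\varphi$ of \eqref{eq-varphi}, where the corresponding Euclidean facts are already available (Lemma \ref{lem-euclidapprox}, Lemma \ref{lem-b2}, Lemma \ref{lem-outermin}), and then check that the relevant structures survive the transport.

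For (i): writing $\hat\Omega\times\{1\}:=\mathcal{C}\hat\Gamma\cap\{x_{n+1}=1\}$, the set $\hat\Omega=\varphi^{-1}(\hat\Gamma)$ is a compact convex set in $\mathbb{R}^n$ with non-empty interior, so Lemma \ref{lem-euclidapprox} (applied after translating so that $0\in\mathrm{int}\,\hat\Omega$) produces compact convex $\hat\Omega_k$ with smooth strictly convex boundary such that $(1+k^{-1})\hat\Omega_k$ strictly decreases to $\hat\Omega$ and $(1-k^{-1})\hat\Omega_k$ strictly increases to $\hat\Omega$. I would push these forward by $\varphi$: since $\varphi$ is a diffeomorphism of $\mathbb{R}^n$ onto $\mathbb{S}^n_+$ and, by Lemma \ref{lem-convexitysphere}, $\varphi$ carries convex subsets of $\mathbb{R}^n$ (whose cones over the origin in $\mathbb{R}^{n+1}$ are convex) to convex subsets of $\mathbb{S}^n_+$ and vice versa, the images $\varphi(\hat\Omega_k)$ are compact convex sets in $\mathbb{S}^n_+$; their boundaries are images under the diffeomorphism $\varphi$ of smooth strictly convex hypersurfaces. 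Strict convexity in $\mathbb{S}^n$ is equivalent to positivity of the second fundamental form of the cone away from the origin, which is preserved because $\varphi$ corresponds to intersecting that fixed cone with a sphere — but more simply, one checks directly that a smooth hypersurface bounding a convex body with everywhere positive curvature in $\mathbb{R}^n$ maps under $\varphi$ to a smooth hypersurface bounding a convex body in $\mathbb{S}^n_+$, which (being smooth convex with non-degenerate normal map, as the cone is non-degenerate) is strictly convex. The strict monotonicity of the outer/inner families transports verbatim since $\varphi$ is a homeomorphism preserving inclusions and interiors.

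For (ii): this is essentially the second assertion of Lemma \ref{lem-b2}, but that lemma already assumes the target convex set $\hat\Gamma$ has non-empty interior, so for the approximating sequences coming from (i) it applies directly; the general "any sequence $\hat\Gamma_k\to\hat\Gamma$" case with $\hat\Gamma$ having non-empty interior is exactly Lemma \ref{lem-b2}'s second statement, so I would simply cite it. For (iii): set $\hat\Omega:=\varphi^{-1}(\hat\Gamma)$, $\hat\Omega':=\varphi^{-1}(\hat\Gamma')$, both compact convex in $\mathbb{R}^n$ with $\hat\Omega\subset\hat\Omega'$. The perimeters are related by $|\Gamma|=|\p\hat\Omega|_{\varphi^*g_{\mathbb{S}^n}}$, a weighted area, which is \emph{not} the Euclidean perimeter, so I cannot quote Lemma \ref{lem-outermin} as a black box. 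Instead I would redo the calibration argument of Lemma \ref{lem-outermin} directly on $\mathbb{S}^n_+$: when $\Gamma$ is smooth strictly convex, the family $\{\Gamma_s\}$ of boundaries of geodesically-scaled (i.e. $\varphi$-images of Euclidean dilates, equivalently intersections of dilated cones with $\mathbb{S}^n$) convex sets foliates $\mathbb{S}^n_+\setminus\mathrm{int}\,\hat\Gamma$ by smooth strictly convex hypersurfaces; let $\nu$ be the (smooth, outward) unit normal field of this foliation on $\mathbb{S}^n_+\setminus\mathrm{int}\,\hat\Gamma$. Then $\mathrm{div}_{\mathbb{S}^n}\nu=H\ge0$ with $H>0$ away from $\Gamma$, and the divergence theorem on the region between $\Gamma$ and $\Gamma'$ gives $0\le\int_{\hat\Gamma'\setminus\hat\Gamma}H=\int_{\Gamma'}\langle\nu,\nu'\rangle\,dA-\int_\Gamma dA\le|\Gamma'|-|\Gamma|$, with equality forcing $\hat\Gamma'\setminus\hat\Gamma$ to be null, i.e. $\Gamma=\Gamma'$. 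For general convex $\hat\Gamma$, approximate from inside by the smooth strictly convex $\hat\Gamma^{in}_k$ of part (i), apply the smooth case to get $|\Gamma^{in}_k|\le|\Gamma'|$, and let $k\to\infty$ using part (ii). The main obstacle is the rigidity/divergence-theorem step on the non-smooth region $\hat\Gamma'\setminus\hat\Gamma$: one must either justify the divergence theorem for boundaries that are merely convex hypersurfaces (they are Lipschitz, which suffices, and the normal field $\nu$ of the foliation is smooth up to $\p(\hat\Gamma'\setminus\hat\Gamma)$), or — as in Lemma \ref{lem-outermin} — first replace $\hat\Gamma'$ by a smooth strictly convex outer approximation from part (i) and pass to the limit via part (ii); I would take the latter, cleaner route to avoid any measure-theoretic subtlety about the geometry of $\p\hat\Gamma'$.
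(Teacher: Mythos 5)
Your proposal is correct and follows essentially the same route as the paper: transport to $\mathbb{R}^n$ via the gnomonic projection $\varphi$, apply Lemma \ref{lem-euclidapprox} and push forward for the approximation, cite Lemma \ref{lem-b2} for area convergence, and rerun the calibration argument on $\mathbb{S}^n_+$ using the foliation by $\varphi$-images of Euclidean dilates $\{\lambda(\Omega-p)+p\}_{\lambda\ge1}$, reducing the non-smooth case to the smooth one by inner approximation. Your extra care about why strict convexity survives $\varphi$ and about the divergence theorem on merely convex boundaries fills in details the paper leaves implicit, but does not change the argument.
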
 

\begin{proof}
Let $\hat \Omega := \varphi^{-1}(\hat \Gamma)\subset\mathbb{R}^n$. Then by Lemma \ref{lem-euclidapprox}, $\hat \Omega$ could be approximated from inside (and outside) by  strictly monotone  sequence of compact sets with smooth strictly convex boundaries. The images of these sequences of sets under $\varphi$ give the desired approximating sequences. The convergence of area is shown in Lemma \ref{lem-b2}.

The proof of the second part is similar to the proof of Lemma \ref{lem-outermin}. Suppose first $\Gamma=\p \hat \Gamma$ is smooth and $\hat \Gamma \subset \hat \Gamma' \subset \mathbb{S}^n \cap \{x_{n+1}>0\}$. Fix $p\in \hat \Omega=\varphi^{-1}(\hat \Gamma)$ and consider the foliation $\{\lambda(\Omega -p) +p\}_{\lambda\ge1}$. Then the  image of this  foliation under $\varphi$, that is  \[\varphi (\lambda(\Omega -p) +p) \subset \mathbb{S}^n, \text{ for }\lambda\ge1,\]
 gives a foliation of the region $\mathbb{S}^n_+ - \text{int}(\hat \Gamma)$ by smooth convex hypersurfaces in $\mathbb{S}^n$. By the same calibration argument, we obtain $|\Gamma'|\ge |\Gamma|$. In the non-smooth case  we approximate $\Gamma$ from inside by smooth sets  and apply Lemma \ref{lem-b2}.

\end{proof}

The next approximation   lemma concerns   with the case where  $\hat \Gamma\subset \mathbb{S}^n_+$ has empty interior. 

 \begin{lemma}\label{lem-b5} {\color{black} For $n\ge2$}, suppose $\hat \Gamma$ in $\mathbb{S}^{n}_+$ is a compact convex set which has empty  interior in $\mathbb{S}^n$. Then there is $\{\hat\Gamma_k\}$ a sequence of compact convex sets with non-empty interior and smooth strictly convex boundaries which strictly decreases to $\hat \Gamma$. For any such sequence $\hat \Gamma_k$, $|\Gamma_k|=|\p\hat \Gamma_k|$ decreases to $P(\hat \Gamma)$ as $k\to\infty$. Here $P(\hat \Gamma)$ is defined by \eqref{eq-perimeter}.  
  \begin{proof} 
$\hat \Omega= \varphi^{-1}( \hat \Gamma)$ has empty  interior in $\mathbb{R}^n$. 
Consider the set $\hat \Omega^{\delta}:=$ $\delta$-envelope of $\hat \Omega$ in $(\mathbb{R}^n, g_{\mathbb{R}^n} )$. Then $\hat \Omega^\delta $ is a compact convex set with non-empty interior, implying that $\varphi (\hat \Omega^\delta)$ is a closed convex set with non-empty interior in $\mathbb{S}^n$. By a diagonal argument applied to  the approximations of $\varphi(\hat \Omega^{1/k})$, which is similar to the proof of Lemma \ref{lem-euclidapprox}, we obtain  the existence of a strictly decreasing approximation. 

\smallskip   
Let us now prove the convergence $|\Gamma_k| \to P(\hat \Gamma)$, as $k \to \infty$.   If a convex set $\hat \Omega\subset \mathbb{R}^{n}$ has 
empty  interior, it is contained in a hyperplane of $\mathbb{R}^{n}$.  Since a rotation is an isometry of $(\mathbb{R}^n, \varphi^* g_{\mathbb{S}^n})$, we may assume that $\hat\Omega \subset \{x_n= l\}$,  for some $l>0$. Let  $\hat \Sigma \subset\mathbb{R}^{n-1}$ denote  the projection of $\hat \Omega$ to $\{x_n=0\}=\mathbb{R}^{n-1}$ and  $\hat \Sigma^\delta$ 
denote  the $\delta$-envelope of $\hat\Sigma$ in $\mathbb{R}^{n-1}$ with respect to the standard Euclidean metric. Observe that $\hat \Sigma^\delta \times \{l\} = \hat \Omega^\delta \cap \{x_{n}=l\}$. 
Moreover,   $\hat \Omega^\delta \subset\hat  \Sigma^\delta \times [l-\delta,l+\delta]$. The outer area minimizing property (Lemma \ref{lem-outersphere}) implies that 
  \[\ba |\p \hat \Omega^\delta |_{\varphi^* g_{\mathbb{S}^n}}& \le|\p (\hat\Sigma^\delta \times [-\delta,\delta] {\color{black})}|_{\varphi^* g_{\mathbb{S}^n}}\\&= |\p \hat\Sigma^\delta\times [l-\delta,l+\delta]|_{\varphi^* g_{\mathbb{S}^n}} +   |\hat \Sigma^\delta \times \{l-\delta\}|_{\varphi^* g_{\mathbb{S}^n}}  + |\hat \Sigma^\delta \times \{l+\delta\}|_{\varphi^* g_{\mathbb{S}^n}}    .\ea\]   It is clear that as $\delta \to 0$,  the first term in the last line is of order $O(\delta)$. Since $\hat \Sigma^\delta$ decreases to $\hat \Sigma$, together with the smoothness of $\varphi$, we conclude that each of remaining two terms converges to $|\hat\Sigma\times \{l\}|_{\varphi^* g_{\mathbb{S}^n}} $. This shows $\limsup_{\delta \to 0}|\p \hat \Omega^\delta |_{\varphi^* g_{\mathbb{S}^n}} \le P(\hat\Gamma).$ Next, $\p \hat \Omega^\delta$ contains $\hat \Sigma\times \{l-\delta\}$ and $\hat\Sigma\times \{l+\delta\}$,  implying that  \[  |\hat \Sigma \times \{l-\delta\}|_{\varphi^* g_{\mathbb{S}^n}} +|\hat \Sigma \times \{l+\delta\}|_{\varphi^* g_{\mathbb{S}^n}} \le |\p \hat \Omega^\delta |_{\varphi^* g_{\mathbb{S}^n}}\] and hence \[2 |\hat\Sigma \times \{l\} |_{\varphi^*g_{\mathbb{S}^n}} \le \liminf_{\delta\to0} |\hat \Omega^\delta|_{\varphi^*g_{\mathbb{S}^n}}.\] This proves that $ \lim_{\delta\to0} |\hat \Omega^\delta|_{\varphi^*g_{\mathbb{S}^n}}= P(\hat \Gamma)$.
Now, for any decreasing approximation by convex sets with non-empty interior $\hat\Gamma_k$, $\lim_{k\to\infty} |\Gamma_k|$ exists as it is a decreasing sequence. For each $k$, we may find $\delta_1(k)$ and $\delta_2(k)$ which converge to $0$ as $k\to \infty$ such that $\varphi(\hat \Omega^{\delta_1(k)}) \subset \hat \Gamma_k \subset \varphi(\hat \Omega^{\delta_2(k)})$ and this, in particular, implies that $|\hat \Omega^{\delta_1(k)}|_{\varphi^*g_{\mathbb{S}^n}} \le |\Gamma_k| \le |\hat \Omega^{\delta_2(k)}|_{\varphi^*g_{\mathbb{S}^n}}$. We conclude that  $|\Gamma_k| \to P(\hat \Gamma)$, as $k \to \infty$. 
 
  \end{proof}
 
 \end{lemma}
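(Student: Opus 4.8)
The plan is to reduce everything to the Euclidean statement already established in Lemma \ref{lem-euclidapprox} by transporting the problem through the gnomonic projection $\varphi$, and then to identify the limiting area via a careful ``thin slab'' comparison. First I would set $\hat\Omega := \varphi^{-1}(\hat\Gamma)\subset\mathbb{R}^n$, which is a compact convex set with \emph{empty} interior in $\mathbb{R}^n$ (since $\varphi$ is a diffeomorphism of $\mathbb{S}^n_+$ onto $\mathbb{R}^n$, it carries interior points to interior points). For $\delta>0$ let $\hat\Omega^\delta$ be the Euclidean $\delta$-envelope of $\hat\Omega$; this has non-empty interior, so $\varphi(\hat\Omega^\delta)$ is a compact convex set with non-empty interior in $\mathbb{S}^n_+$. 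Applying Lemma \ref{lem-euclidapprox} to each $\hat\Omega^{1/k}$ and running a diagonal argument (exactly as in the proof of Lemma \ref{lem-b5} itself) produces a strictly decreasing sequence of compact convex sets with smooth strictly convex boundaries in $\mathbb{S}^n_+$ whose intersection is $\hat\Gamma$. This handles the existence half of the statement.

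Next I would prove the area convergence $|\Gamma_k|\to P(\hat\Gamma)$ for \emph{any} strictly decreasing approximation. The key is to compute $\lim_{\delta\to0}|\partial\hat\Omega^\delta|_{\varphi^*g_{\mathbb{S}^n}}$, the perimeter measured in the pulled-back spherical metric. Since $\hat\Omega$ has empty interior it lies in a hyperplane of $\mathbb{R}^n$, which after a rotation (an isometry of $(\mathbb{R}^n,\varphi^*g_{\mathbb{S}^n})$) we may take to be $\{x_n=l\}$ for some $l>0$; let $\hat\Sigma\subset\mathbb{R}^{n-1}$ be the orthogonal projection of $\hat\Omega$ to $\{x_n=0\}$ and $\hat\Sigma^\delta$ its Euclidean $\delta$-envelope in $\mathbb{R}^{n-1}$. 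The geometric sandwich
\[
\hat\Sigma^{\delta-c\delta^2}\times\{l\}\ \subset\ \hat\Omega^\delta\cap\{x_n=l\}\ \subset\ \hat\Sigma^\delta\times\{l\},\qquad \hat\Omega^\delta\subset\hat\Sigma^\delta\times[l-\delta,l+\delta]
\]
lets me compare $\hat\Omega^\delta$ with the thin cylinder $\hat\Sigma^\delta\times[l-\delta,l+\delta]$. Using the outer area minimizing property on $\mathbb{S}^n_+$ (Lemma \ref{lem-outersphere}) gives the upper bound
\[
|\partial\hat\Omega^\delta|_{\varphi^*g_{\mathbb{S}^n}}\ \le\ |\partial(\hat\Sigma^\delta\times[l-\delta,l+\delta])|_{\varphi^*g_{\mathbb{S}^n}}\ =\ (\text{lateral part})+|\hat\Sigma^\delta\times\{l-\delta\}|_{\varphi^*g}+|\hat\Sigma^\delta\times\{l+\delta\}|_{\varphi^*g}.
\]
The lateral term is $O(\delta)$ as $\delta\to0$; the two cap terms each converge to $|\hat\Sigma\times\{l\}|_{\varphi^*g_{\mathbb{S}^n}}=|\Gamma|$ by the smoothness of $\varphi$ and $\hat\Sigma^\delta\downarrow\hat\Sigma$. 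Hence $\limsup_{\delta\to0}|\partial\hat\Omega^\delta|_{\varphi^*g}\le P(\hat\Gamma)$ when $\hat\Gamma$ has empty interior, recalling $P(\hat\Gamma)=2|\Gamma|$ in this case. For the matching lower bound, observe $\partial\hat\Omega^\delta$ contains both translated copies $\hat\Sigma\times\{l\pm\delta\}$ (pushed slightly inward, to order $\delta^2$), so
\[
|\hat\Sigma\times\{l-\delta\}|_{\varphi^*g}+|\hat\Sigma\times\{l+\delta\}|_{\varphi^*g}\ -O(\delta^2)\ \le\ |\partial\hat\Omega^\delta|_{\varphi^*g},
\]
and letting $\delta\to0$ yields $2|\Gamma|\le\liminf_{\delta\to0}|\partial\hat\Omega^\delta|_{\varphi^*g}$. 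Thus $\lim_{\delta\to0}|\partial\hat\Omega^\delta|_{\varphi^*g}=P(\hat\Gamma)$.

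Finally, to promote this to an arbitrary strictly decreasing approximation $\hat\Gamma_k\downarrow\hat\Gamma$: since each $\hat\Gamma_k$ has non-empty interior and $\hat\Gamma_k\to\hat\Gamma$ in Hausdorff distance, for each $k$ I can find radii $\delta_1(k)\le\delta_2(k)$, both tending to $0$, with $\varphi(\hat\Omega^{\delta_1(k)})\subset\hat\Gamma_k\subset\varphi(\hat\Omega^{\delta_2(k)})$. The outer area minimizing property (Lemma \ref{lem-outersphere}, now between sets all having interior) then sandwiches $|\hat\Omega^{\delta_1(k)}|_{\varphi^*g}\le|\Gamma_k|\le|\hat\Omega^{\delta_2(k)}|_{\varphi^*g}$, and both sides converge to $P(\hat\Gamma)$ by the previous paragraph; since $(|\Gamma_k|)$ is monotone decreasing the limit is exactly $P(\hat\Gamma)$. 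I expect the main obstacle to be the bookkeeping around the $\delta$-envelope of a lower-dimensional convex set: specifically verifying that the cross-sections $\hat\Omega^\delta\cap\{x_n=l\}$ really are $\hat\Sigma^\delta\times\{l\}$ up to an $O(\delta^2)$ correction, and that the spherical-metric areas of the two caps $\hat\Sigma^\delta\times\{l\pm\delta\}$ genuinely converge to $|\Gamma|$ rather than to some reweighted quantity — this is where the precise local form of $\varphi$ and the equivalence of $\varphi^*g_{\mathbb{S}^n}$ with $g_{\mathbb{R}^n}$ on compacta does the real work. Everything else is a direct adaptation of the arguments already in the proof of Lemma \ref{lem-b5}.
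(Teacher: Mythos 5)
Your proposal is correct and follows essentially the same route as the paper: transport through the gnomonic projection, $\delta$-envelopes plus a diagonal argument for existence, the thin-slab comparison with the outer area minimizing property of Lemma \ref{lem-outersphere} for the upper bound, the two translated caps for the lower bound, and the $\delta_1(k),\delta_2(k)$ sandwich to treat an arbitrary strictly decreasing sequence. The only (harmless) deviation is your $O(\delta^2)$ hedging, which is unnecessary: since $\hat\Omega=\hat\Sigma\times\{l\}$, the cross-section $\hat\Omega^\delta\cap\{x_n=l\}$ equals $\hat\Sigma^\delta\times\{l\}$ exactly, and the caps $\hat\Sigma\times\{l\pm\delta\}$ lie exactly on $\p\hat\Omega^\delta$, as in the paper's argument.
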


We conclude this appendix by  the following  generalized outer area minimizing property.
\begin{lemma}\label{lem-genouter} {\color{black}For $n\ge2$,} if  $\hat \Gamma_1, \hat \Gamma_2$,  are  compact convex sets such that $\hat \Gamma_1 \subset \hat \Gamma_2 \subset \mathbb{S}^n_+$. Then,  $P(\hat \Gamma_1)\le P(\hat \Gamma_2){\color{black}< |\mathbb{S}^{n-1}|}$. 

 \begin{proof}
 By Lemma \ref{lem-outersphere} and \ref{lem-b5}, there are approximating sequences of strictly decreasing compact convex sets $\hat \Gamma_{1,k}$ and $\hat \Gamma _{2,k}$ in $\mathbb{S}^{n}_+$ with smooth strictly convex boundaries. $|\Gamma_{1,k}| \to P(\hat\Gamma_1)$ and $|\Gamma_{2,k}|\to P(\hat\Gamma_2)$ by the two lemmas say. By the strict monotonicity of the sequences, for fixed $k$ there is $l_0$ such that $\hat \Gamma_{1,l} \subset \hat \Gamma_{2,k}$ for $l>l_0$.  Taking $l\to \infty$, the  outer area minimizing property in Lemma \ref{lem-outersphere} implies $P(\hat \Gamma_1) \le |\Gamma_{2,k}|$. The {\color{black}first} inequality now follows by letting  $k\to \infty$. {\color{black} The second inequality is implied by $\hat \Gamma_2 \subset \mathbb{S}^n \cap \{x_{n+1} \le \e\}$ for small $\e>0$, the first inequality, and  $|\mathbb{S}^n \cap \{x_{n+1}=\e\}| < |\mathbb{S}^{n-1}|$.}
 
 \end{proof}
\end{lemma}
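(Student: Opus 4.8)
The plan is to reduce the statement to the outer area minimizing property of Lemma \ref{lem-outersphere} applied to smooth strictly convex comparison sets supplied by the approximation lemmas. For $i=1,2$, whether $\hat\Gamma_i$ has non-empty interior (in which case I invoke Lemma \ref{lem-outersphere}) or empty interior (in which case I invoke Lemma \ref{lem-b5}), I would fix a \emph{strictly decreasing} sequence $\{\hat\Gamma_{i,k}\}$ of compact convex subsets of $\mathbb{S}^n_+$ with smooth strictly convex boundaries such that $\bigcap_k \hat\Gamma_{i,k} = \hat\Gamma_i$ and $|\Gamma_{i,k}| = |\partial\hat\Gamma_{i,k}| \to P(\hat\Gamma_i)$ as $k\to\infty$ (for the non-empty interior case this last convergence is $|\Gamma_{i,k}|\to|\Gamma_i|=P(\hat\Gamma_i)$, for the empty interior case it is the content of Lemma \ref{lem-b5}). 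The first task is then to nest the two families: for a fixed $k$, strict monotonicity of $\{\hat\Gamma_{2,k}\}$ gives $\hat\Gamma_1 \subset \hat\Gamma_2 \subset \text{int}(\hat\Gamma_{2,k}) =: U$, an open set; since the compact sets $\hat\Gamma_{1,l}\setminus U$ decrease to $\hat\Gamma_1\setminus U = \emptyset$, compactness (Cantor's intersection property) forces $\hat\Gamma_{1,l}\subset U\subset \hat\Gamma_{2,k}$ for all $l$ larger than some $l_0(k)$.

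With the nesting in hand, I would apply the (non-strict) outer area minimizing inequality of Lemma \ref{lem-outersphere} to the pair $\hat\Gamma_{1,l}\subset \hat\Gamma_{2,k}\subset \mathbb{S}^n_+$ — legitimate since $\hat\Gamma_{1,l}$ is a compact convex set with non-empty interior and $\hat\Gamma_{2,k}$ is convex — to obtain $|\Gamma_{1,l}|\le |\Gamma_{2,k}|$ for $l\ge l_0(k)$. Letting $l\to\infty$ yields $P(\hat\Gamma_1)\le |\Gamma_{2,k}|$, and then $k\to\infty$ yields $P(\hat\Gamma_1)\le P(\hat\Gamma_2)$. For the strict upper bound $P(\hat\Gamma_2)<|\mathbb{S}^{n-1}|$, I would use that $\hat\Gamma_2$ is a compact subset of the \emph{open} hemisphere, so the coordinate $x_{n+1}$ attains a positive minimum $\e$ on it; hence $\hat\Gamma_2\subset \hat C_\e := \mathbb{S}^n\cap\{x_{n+1}\ge \e\}$, which is a closed geodesic ball of radius $\arccos\e<\pi/2$, so it is geodesically convex, has non-empty interior, and satisfies $P(\hat C_\e) = |\partial \hat C_\e| = |\mathbb{S}^n\cap\{x_{n+1}=\e\}| = (1-\e^2)^{(n-1)/2}\,|\mathbb{S}^{n-1}| < |\mathbb{S}^{n-1}|$. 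Applying the first inequality to $\hat\Gamma_2\subset\hat C_\e\subset\mathbb{S}^n_+$ then gives $P(\hat\Gamma_2)\le P(\hat C_\e)<|\mathbb{S}^{n-1}|$.

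Since the heavy lifting has already been done in Lemmas \ref{lem-outersphere}, \ref{lem-b5} and \ref{lem-b2}, I do not expect a serious analytic obstacle here; the one point that needs a little care is the nesting step, where one must pass from ``$\hat\Gamma_{1,l}$ decreases to a set $\hat\Gamma_1$ lying in the open set $\text{int}(\hat\Gamma_{2,k})$'' to ``$\hat\Gamma_{1,l}\subset\text{int}(\hat\Gamma_{2,k})$ eventually'' via compactness. This is precisely why the approximating families, rather than $\hat\Gamma_1$ and $\hat\Gamma_2$ themselves, must be compared: the outer area minimizing inequality of Lemma \ref{lem-outersphere} is stated for an inner set with smooth strictly convex boundary. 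One should also verify that both the empty-interior and non-empty-interior cases of $\hat\Gamma_1$ and $\hat\Gamma_2$ are covered uniformly by the choice of approximating sequences described above.
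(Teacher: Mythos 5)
Your proposal is correct and follows essentially the same route as the paper: approximate both sets from outside by strictly decreasing smooth strictly convex sets (Lemmas \ref{lem-outersphere} and \ref{lem-b5}), nest the two families via strict monotonicity and compactness, apply the outer area minimizing property, and pass to the limits; the strict bound $P(\hat\Gamma_2)<|\mathbb{S}^{n-1}|$ is obtained exactly as in the paper by comparison with the geodesic ball $\mathbb{S}^n\cap\{x_{n+1}\ge\e\}$. Your write-up merely makes explicit two points the paper leaves implicit (the Cantor-intersection justification of the nesting and the formula $(1-\e^2)^{(n-1)/2}|\mathbb{S}^{n-1}|$), and it also silently corrects what appears to be a sign typo in the paper's final sentence ($x_{n+1}\le\e$ should read $x_{n+1}\ge\e$).
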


\end{document}